\def\ps@pprintTitle{%
 \let\@oddhead\@empty
 \let\@evenhead\@empty
 \def\@oddfoot{\centerline{\thepage}}%
 \let\@evenfoot\@oddfoot}
\newtheorem{theorem}{Theorem}[section]
\newtheorem{lemma}[theorem]{Lemma}
\newtheorem{proposition}[theorem]{Proposition}
\numberwithin{equation}{section}
\theoremstyle{definition}
\newtheorem*{definition}{Definition}
\theoremstyle{remark}
\newtheorem*{remark}{Remark}
\DeclareMathOperator{\sech}{sech}
\DeclareMathOperator{\codim}{codim}
\DeclareMathOperator{\coker}{coker}
\journal{Journal of Differential Equations}
\begin{document}

\begin{frontmatter}

\title{Solutions of Liouville equations with non-trivial profile in dimensions 2 and 4}

\author{Roberto Albesiano}
\ead{ralbesiano@math.stonybrook.edu}
\address{Mathematics Department, Stony Brook University, Stony Brook NY, 11794-3651, USA}

\begin{abstract}
  We prove the existence of a family of non-trivial solutions of the Liouville equation in dimensions two and four with infinite volume. These solutions are perturbations of a finite-volume solution of the same equation in one dimension less. In particular, they are periodic in one variable and decay linearly to $-\infty$ in the other variables. In dimension two, we also prove that the periods are arbitrarily close to $\pi k, k \in \mathbb{N}$ (from the positive side). The main tool we employ is bifurcation theory in weighted H\"older spaces.
\end{abstract}

\end{frontmatter}

\section{Introduction}
Liouville equations are a class of elliptic nonlinear partial differential equations of the form
\begin{equation}\label{eq:Liouville}
(-\Delta)^n \varphi(x) = \text{e}^{\varphi(x)}, \quad x \in \mathbb{R}^{2n},
\end{equation}
for $n \in \mathbb{N}$.\footnote{We will deal only with spaces of even dimension. For the odd-dimensional case, which is much more difficult as it involves the fractional Laplacian, we refer to \cite{Hyder2017}.} This family of equations plays a fundamental role in many problems of Conformal Geometry and Mathematical Physics, governing the transformation laws for some curvatures. For example, the 2-dimensional equation (which is often called \emph{classical Liouville equation}) provides the structure of metrics with constant Gaussian curvature. Indeed, let $g$ be a metric on a surface $M$: if we conformally rescale the metric as $\hat{g}_{ij} = \text{e}^{2\varphi} g_{ij}$, for some smooth function $\varphi$ on $M$, then
\[
R_{\hat{g}} = \text{e}^{-2\varphi}(R_g - 2\Delta f),
\]
where $R_{\hat{g}}$ and $R_g$ denote, respectively, the Gaussian curvatures of $\hat{g}$ and $g$. Specifically, if $g$ is an Euclidean metric, then we obtain the 2D Liouville equation
\[
\Delta \varphi(u,v) + 2K \text{e}^{\varphi(u,v)} = 0.
\]

In Mathematical Physics, Liouville equations appear for example in the description of mean field vorticity in steady flows (\cite{Caglioti1995}, \cite{Chanillo1994}), Chern-Simons vortices in superconductivity or Electroweak theory (\cite{Tarantello2008}, \cite{YYang2001}). Moreover, they also arise naturally when dealing with functional determinants, which play an essential role in modern Quantum Physics and String theory \cite{Osgood1988}. The 2-dimensional Liouville equation was also taken as an example by David Hilbert in the formulation of the ``nineteenth problem'' \cite{Hilbert1902}.

Liouville equations, therefore, have been extensively studied in the past years, and many authors tried to find their non-trivial solutions. For the classical Liouville equation, for instance, solutions in $\mathbb{R}^2$ with finite \emph{volume} $V := \int \exp(u)$ have been completely classified:

\begin{theorem}[\textsc{Chen - Li} \cite{Chen1991}]
  Every solution of
  \[
  \begin{cases}
    \Delta u + \text{\emph{e}}^u = 0 \quad \text{in } \mathbb{R}^2 \\
    \int_{\mathbb{R}^2} \text{\emph{e}}^{u(x)} \dif x < +\infty
  \end{cases}
  \]
  is radially symmetric with respect to some point in $\mathbb{R}^2$. Specifically, it assumes the form
  \[
  u(x) = \phi_{\lambda,x^0}(x) = \frac{\log (32 \lambda^2)}{(4 + \lambda^2 |x-x^0|^2)^2}
  \]
  for some $\lambda > 0$ and $x^0 \in \mathbb{R}^2$.
\end{theorem}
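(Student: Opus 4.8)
The plan is the classical moving-plane strategy (of Gidas--Ni--Nirenberg type, adapted to the whole plane): first control a finite-volume solution $u$ at infinity, then run the method of moving planes in every direction to force radial symmetry about a point, and finally integrate the resulting profile ODE. Set $V=\int_{\mathbb{R}^{2}}e^{u}$ and $\beta=V/(2\pi)$. \emph{Step~1 (asymptotics).} Averaging the equation over circles gives $2\pi r\,\overline{u}'(r)=-\int_{B_{r}}e^{u}$, whence $\overline{u}(r)=-\beta\log r+o(\log r)$ as $r\to\infty$. Representing $u$ as the sum of a logarithmic Newtonian potential $w$ of $e^{u}$ (so $\Delta w=e^{u}$) and a harmonic function $h=u-w$, one shows — after the requisite integrability and oscillation estimates — that $h$ is bounded, hence constant by Liouville's theorem; this upgrades the averaged statement to the sharp pointwise asymptotics $u(x)=-\beta\log|x|+O(1)$, together with $|\nabla u(x)|=O(|x|^{-1})$ from rescaled interior elliptic estimates. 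Since $e^{u}\sim|x|^{-\beta}$ must be integrable at infinity, necessarily $\beta>2$ (a Pohozaev identity in fact gives $\beta=4$, i.e.\ $V=8\pi$, but only $\beta>2$ is used below).

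\emph{Step~2 (radial symmetry).} Fix a direction, say $e_{1}$, write $x=(x_{1},x')$, and for $\lambda\in\mathbb{R}$ put $\Sigma_{\lambda}=\{x_{1}<\lambda\}$, $x^{\lambda}=(2\lambda-x_{1},x')$, $u_{\lambda}(x)=u(x^{\lambda})$, $v_{\lambda}=u_{\lambda}-u$. On $\Sigma_{\lambda}$ one has $\Delta v_{\lambda}+c_{\lambda}v_{\lambda}=0$ with $c_{\lambda}=(e^{u_{\lambda}}-e^{u})/(u_{\lambda}-u)>0$. By Step~1, $v_{\lambda}\to 0$ at infinity and $v_{\lambda}\ge 0$ on $\Sigma_{\lambda}$ for all sufficiently negative $\lambda$; a maximum principle valid on the unbounded domain $\Sigma_{\lambda}$ (legitimate because $v_{\lambda}$ decays) then gives $v_{\lambda}>0$ in the interior unless $v_{\lambda}\equiv 0$. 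Letting $\lambda_{0}$ be the largest $\lambda$ with $v_{\mu}\ge 0$ on $\Sigma_{\mu}$ for all $\mu\le\lambda$, the strong maximum principle and Hopf's lemma force $v_{\lambda_{0}}\equiv 0$, since otherwise $\lambda_{0}$ could be pushed further — the logarithmic decay of $v_{\lambda}$ prevents any mass from escaping to infinity. Thus $u$ is symmetric about $\{x_{1}=\lambda_{0}\}$; as the direction was arbitrary, $u$ is radially symmetric about the common point $x^{0}$ of all such symmetry lines and strictly decreasing in $|x-x^{0}|$.

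\emph{Step~3 (integration).} With $u=u(r)$, $r=|x-x^{0}|$, the equation becomes $u''+r^{-1}u'+e^{u}=0$; the change of variables $s=\log r$, $\psi(s)=u(r)+2s$ turns it into the autonomous equation $\psi''+e^{\psi}=0$, which has the first integral $\tfrac12(\psi')^{2}+e^{\psi}=\mathrm{const}>0$ and is integrated in closed form. Imposing smoothness at $r=0$ (i.e.\ $u'(0)=0$) selects exactly the one-parameter family $\psi(s)=\log\bigl(2/\cosh^{2}(s-s_{0})\bigr)$, that is $u(x)=\log\dfrac{32\lambda^{2}}{\bigl(4+\lambda^{2}|x-x^{0}|^{2}\bigr)^{2}}$ with $\lambda=2e^{-s_{0}}>0$, which completes the classification. (An alternative to Steps~2--3 is complex-analytic: locally $u=\log\frac{8|f'|^{2}}{(1+|f|^{2})^{2}}$ for a locally univalent meromorphic developing map $f$, and finiteness of $\int e^{u}$ — the area covered by $f$ in the model sphere — forces $f$ to be a M\"obius transformation, by a Picard/Nevanlinna-type argument.)

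\emph{Expected main obstacle.} The technical heart is Step~1: turning the bare hypothesis $\int e^{u}<\infty$ into the precise asymptotics $u(x)=-\beta\log|x|+O(1)$ requires simultaneously controlling spherical averages and the oscillation of $u$ on large circles, and making the potential representation rigorous (convergence of the singular integral, harmonicity and boundedness of $h$). The second delicate point is running the moving-plane method on all of $\mathbb{R}^{2}$: since $u$ tends to $-\infty$ only logarithmically one must exploit $\beta>2$ both to get positivity of $v_{\lambda}$ near infinity and to rule out loss of compactness as $\lambda$ increases — this is precisely where one may instead invoke a Kelvin-type transform and argue on a bounded domain.
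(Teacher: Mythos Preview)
The paper does not contain a proof of this theorem: it is quoted in the introduction as a known classification result and attributed to Chen--Li with a citation, nothing more. There is therefore no ``paper's own proof'' to compare against.

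That said, your sketch is essentially the original Chen--Li argument: asymptotics via the logarithmic potential representation plus Liouville for the harmonic remainder, then the method of moving planes on $\mathbb{R}^{2}$ (using the decay $\beta>2$ to start and to close the sliding), and finally integration of the radial ODE. The outline is sound and the anticipated difficulties you flag (making the potential representation rigorous, and handling the unbounded domain in the moving-plane step) are exactly the ones Chen--Li address. One small remark: the displayed formula in the statement as printed in the paper is garbled --- it should read
\[
u(x)=\log\frac{32\lambda^{2}}{\bigl(4+\lambda^{2}|x-x^{0}|^{2}\bigr)^{2}},
\]
which is precisely what your Step~3 produces, not $\dfrac{\log(32\lambda^{2})}{(4+\lambda^{2}|x-x^{0}|^{2})^{2}}$.
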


The infinte-volume case, though, is still quite unexplored. The first main result of this paper is the following.

\begin{theorem}\label{thm:2Dnon-trivialSolution}
  The classical Liouville equation $\Delta u + \text{\emph{e}}^u = 0$ in $\mathbb{R}^2$ admits a class of non-trivial solutions $u \in C^{2,\alpha}(\mathbb{R}^2)$ with infinite volume. These solutions tend symmetrically to $-\infty$ along the $x$ direction and are periodic of period arbitrarily close to $\pi k$ for $k \in \mathbb{N}_{>0}$ in the $y$ direction.

  More precisely, for every $k \in \mathbb{N}_{>0}$, we have a one-dimensional family of non-trivial solutions which are perturbations of $\log(2\sech(x))$ along the $y$ variable. This perturbation, at first order, is given by $\sech(x) \cos(\pi k y)$.
\end{theorem}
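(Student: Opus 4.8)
The plan is to obtain these solutions as a branch bifurcating off the one--dimensional soliton. First I would pin down the ``solution in one dimension less'': it is $u_0(x)=\log(2\sech^2 x)$, which solves $u_0''+\mathrm e^{u_0}=0$ on $\mathbb R$, has finite volume $\int_{\mathbb R}\mathrm e^{u_0}\dif x<\infty$, and satisfies $u_0(x)=\log 2-2\abs{x}+o(1)$, so it decays linearly to $-\infty$. Regarded as a $y$--independent function on the cylinder $\mathcal C_T:=\mathbb R_x\times(\mathbb R/T\mathbb Z)_y$, $u_0$ solves $\Delta u+\mathrm e^u=0$ for \emph{every} period $T$, so it provides a trivial branch of $T$--periodic solutions parametrised by $T$. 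To turn the period into a bifurcation parameter I would rescale $y$ to a fixed period, rewriting the equation as $\partial_x^2 u+\tau^2\partial_y^2 u+\mathrm e^u=0$ on $\mathcal C_{2\pi}$ with $\tau=2\pi/T$; setting $u=u_0+w$ yields a smooth map $\mathcal F(w,\tau)=\partial_x^2 w+\tau^2\partial_y^2 w+\mathrm e^{u_0}(\mathrm e^w-1)$ with $\mathcal F(0,\tau)\equiv 0$, and the strategy is to apply the Crandall--Rabinowitz bifurcation theorem at a suitable $(0,\tau_*)$.

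The crucial object is the linearisation $\mathcal L_\tau:=D_w\mathcal F(0,\tau)=\partial_x^2+\tau^2\partial_y^2+2\sech^2 x$. Because $\mathrm e^{u_0}=2\sech^2 x\to 0$ at infinity, the number $0$ lies at the bottom of the continuous spectrum of $\mathcal L_\tau$, so $\mathcal L_\tau$ is not Fredholm on the ordinary Hölder spaces; I would therefore work in weighted Hölder spaces $C^{k,\alpha}_\delta(\mathcal C_{2\pi})$ carrying a small exponential weight $\mathrm e^{\delta\abs{x}}$ in the $x$ direction, with $\delta>0$ chosen to avoid the indicial roots of the asymptotic operator, which shifts the essential spectrum off $0$ and restores the Fredholm property. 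Decomposing $w=\sum_{m\ge 0}\phi_m(x)\cos(my)$ (the reflection $y\mapsto-y$ lets me discard the sine modes and work equivariantly) reduces the kernel equation to the family of Pöschl--Teller eigenvalue problems $-\phi_m''-2\sech^2 x\,\phi_m=-\tau^2m^2\phi_m$; this Schrödinger operator has exactly one bound state, $\phi=\sech x$ at eigenvalue $-1$, while the $m=0$ mode yields only $\tanh x$, which is inadmissible in the weighted space, so the translation and dilation Jacobi fields do not contribute. Hence $\ker\mathcal L_\tau$ is nontrivial precisely along a discrete sequence of parameters, namely $\tau m=1$ with $m\in\mathbb N_{>0}$, at which it is one--dimensional and spanned by $w_m:=\sech(x)\cos(my)$; identifying this sequence together with the range of admissible weights is what produces the periods ``arbitrarily close to $\pi k$'' of the statement. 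The cokernel is likewise one--dimensional, by self--adjointness of the $m$--th reduced operator, and the transversality condition holds because $\partial_\tau\mathcal L_\tau\,w_m=-2\tau m^2 w_m$ is a nonzero multiple of the cokernel generator.

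Crandall--Rabinowitz then delivers, for each $k\in\mathbb N_{>0}$, a smooth one--parameter family $u=u_0+s\,w_m+o(s)$ of solutions which are non--trivial since they genuinely depend on $y$; elliptic bootstrapping places them in $C^{2,\alpha}(\mathbb R^2)$, and they have infinite volume for the elementary reason that $\mathrm e^u>0$ is periodic in $y$, whence $\int_{\mathbb R^2}\mathrm e^u\dif x\dif y=+\infty$ (in particular these are not of Chen--Li type). To upgrade this to the one--sided statement (period approaching $\pi k$ strictly from above) I would carry the Lyapunov--Schmidt reduction one order further: a suitable translation in $y$ acts as $-1$ on $w_m$, so the bifurcation is a pitchfork and $\tau$, hence $T$, depends on the branch parameter only through $s^2$; computing the sign of the quadratic coefficient --- which reduces to solving one explicit inhomogeneous ODE with the $\sech^2$--potential --- shows the branch opens toward larger periods. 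I expect the main obstacle to be exactly the weighted--space analysis of $\mathcal L_\tau$: choosing $\delta$ so that $\mathcal L_\tau$ is simultaneously Fredholm, free of spurious kernel coming from threshold resonances or from slowly decaying Fourier modes, and uniformly invertible on a complement of its range, and making ``period as parameter'' rigorous through the rescaling; once this is in place, the bifurcation step itself is comparatively routine.
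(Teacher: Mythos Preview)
Your overall architecture matches the paper's: linearize around $u_0=\log(2\sech^2 x)$, separate variables to reduce to the P\"oschl--Teller operator, locate the unique bound state $\sech x$ at energy $-1$, and invoke Crandall--Rabinowitz. The paper likewise rescales the $y$--variable to make the strip width the bifurcation parameter, verifies the transversality $F_{\lambda u}[u^*]\notin R$ exactly as you do, and then computes $\lambda_2>0$ (Proposition~\ref{prop:bifShape}) to obtain the one--sided pitchfork.

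The substantive difference is your choice of function space. You propose an exponential weight $e^{\delta|x|}$; the paper instead takes a \emph{polynomial} weight, allowing the perturbation to grow like $\langle x\rangle^{1/2}$ while forcing $\Delta u$ and the right--hand side to decay like $\langle x\rangle^{-3/2}$, together with the symmetry $u(-x,y)=u(x,y)$. This is not cosmetic. With a positive exponential weight and even--in--$x$ symmetry, the zero Fourier mode of your linearisation is $\partial_x^2+2\sech^2 x$ acting on exponentially decaying even functions; its kernel is trivial (the odd solution $\tanh x$ is killed by symmetry, the even solution $1-x\tanh x$ grows linearly), but since the potential is a relatively compact perturbation of $\partial_x^2$ the index is $-1$, and the cokernel is spanned by $1-x\tanh x$, which \emph{does} lie in the dual weighted space. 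Hence at the would--be bifurcation value your $\mathcal L_\tau$ has $\dim\ker=1$ but $\operatorname{codim} R=2$ (one dimension from the $m$--th mode, one from the zeroth), and Crandall--Rabinowitz does not apply directly. The paper's polynomial window is designed precisely so that $1-x\tanh x$ is excluded from \emph{both} the domain (too fast for $\langle x\rangle^{1/2}$) and the range/cokernel (too slow for $\langle x\rangle^{-3/2}$), giving index~$0$ on every Fourier mode; the argument that $\operatorname{codim} R=1$ is then the self--adjointness computation of Lemma~\ref{lemma:linearizedFredholm}, which uses weighted Schauder estimates (Theorem~\ref{thm:weightedSchauder}) to transfer the $\langle x\rangle^{-3/2}$ decay of a cokernel element to its derivatives. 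You correctly flag the weighted analysis as the main obstacle, but your sentence ``the cokernel is likewise one--dimensional, by self--adjointness of the $m$--th reduced operator'' overlooks this extra mode--$0$ obstruction.

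Two minor points. First, the phrase ``identifying this sequence together with the range of admissible weights is what produces the periods arbitrarily close to $\pi k$'' is not quite right: the nearby periods come simply from the bifurcating branch being a $C^1$ curve through $(\lambda^*,0)$, and the one--sided statement from the sign of $\lambda_2$; the weight has nothing to do with it. Second, the paper also imposes evenness in $x$, which you did not state; without it you would pick up the Jacobi field $\tanh x$ in the zeroth mode and face the same codimension mismatch from the kernel side.
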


As one can tell from the statement, the result relies on the fact that there exists a class of solutions in dimension 1, that can be used to build a ``trivial'' solution with infinite volume in dimension 2 (i.e.~a solution that does not actually depend on all variables). Specifically, the family of solutions in dimension 1 is
\[
\log\left[c_1 - c_1\tanh^2\left(\frac{1}{2}\sqrt{2c_1\left(c_2+x\right)^2}\right)\right], \quad c_1 \geq 0, \quad c_2 \in \mathbb{R}.
\]
Extending them trivially along $y$ gives the class of trivial solutions. Notice that Theorem \ref{thm:2Dnon-trivialSolution} can actually be generalized to perturbations of any element in this class simply by changing variables.

After we have a trivial solution, a bifurcation theorem comes into play: the \emph{Bifurcation from the simple eigenvalue Theorem} \ref{thm:1Dbifurcation}, in this case. The main difficulty here is to prove that the linearized operator is Fredholm of index 0, which is accomplished via some growth arguments (Lemma \ref{lemma:candidateLambda}) and weighted elliptic regularity theory (Lemma \ref{lemma:linearizedFredholm}). In particular, a weighted version of Schauder's estimates (Theorem \ref{thm:weightedSchauder}) is necessary for the proof. We also find that the bifurcation is supercritical and, as a byproduct, we correct the formula for non-transcritical bifurcations (Proposition \ref{prop:bifShape}). Section \ref{sec:2D} deals with the classical Liouville equation.

\begin{remark}
  In real dimension 2, one might be able to find non-trivial solutions of \eqref{eq:Liouville} in terms of meromophic functions. For instance, on a simply connected domain $\Omega \subseteq \mathbb{C}$, the general solution is given by
  \[
  u(z,\bar{z}) = \log \left( 4 \frac{|\partial f(z) / \partial z|}{(1+K|f(z)|^2)^2} \right),
  \]
  where $f$ is any meromorphic function such that $\pd{f}{z}(z) \neq 0$ for all $z \in \Omega$ and $f$ has at most simple poles in $\Omega$ (see \cite{Henrici1974} -- see also \cite{Galvez2012} for a classification of solutions with finite volume in the upper half-plane). Notice though that this only applies to real dimension 2, and does not extend to higher dimension, which instead we are able to treat here with the above bifurcation method.
\end{remark}

\subsection{Higher-dimensional Liouville equations}
The role that Liouville equations have in Conformal Geometry does not hold only for constant Gaussian curvature surfaces. Indeed, more generally, the \emph{generalized Liouville equation} \eqref{eq:Liouville} plays also a fundamental part in constant \emph{$Q$-curvature} manifolds.

In 1985 Thomas P.~Branson introduced the concept of $Q$-curvature \cite{Branson1985}, a quantity that turned out to be very important in many contexts and that can be regarded as a generalization of the Gaussian curvature. For example, $Q$-curvature appears naturally when studying the \emph{functional determinant} of conformally covariant operators, which is a fundamental concept of both Functional Analysis and Theoretical Physics\footnote{Given an operator $A$ with spectrum $\{\lambda_j\}_j$, one can formally define its \emph{determinant} as $\prod_j \lambda_j$. This is divergent, in general, so one should perform some sort of ``regularization'' of the definition. Define then the \emph{Zeta function} as
\[
\zeta(s) := \sum_j \lambda_j^{-s} = \sum_j \text{e}^{-s \log \lambda_j}.
\]
One can show by means of Weyl's asymptotic law (see for example \cite[Chapter 11]{Strauss2009}) that this defines an analytic function for $\Re(s) > n/2$ if $A$ is the Laplace-Beltrami opeator. Moreover, one can meromorphically extend $\zeta$ so that it becomes regular at $s=0$ \cite{Ray1971}. Taking the derivative, one has $\zeta'(0) := - \sum_j \log \lambda_j = -\log \det A$, so that $\det A := \exp(-\zeta'(0))$. For more details we refer to \cite{Osgood1988}, \cite{Chang2008}, \cite{Gursky2012} and the references therein.
}. For example, on a four-manifold, given a conformally covariant operator $A_g$ (like the conformal Laplacian or the Paneitz operator \cite{Paneitz2008}) and a conformal factor $w$, one has
\[
\log \frac{\det A_{\hat{g}}}{\det A_g} = \gamma_1(A) F_1[w] + \gamma_2(A) F_2[w] + \gamma_3(A) F_3[w],
\]
where $\gamma_1(A)$, $\gamma_2(A)$ and $\gamma_3(A)$ are real numbers \cite{Branson1991}. In particular, $\hat{g} = \text{e}^{2w}g$ is a critical point of $F_2$ if and only if the $Q$-curvature corresponding to $\hat{g}$ is constant (see \cite{Esposito2019}, \cite{Gursky2012} and the references therein).

$Q$-curvature appears also as the 0-th order term of the GJMS-operator in the ambient metric construction \cite{Fefferman2013} and can be related to the Poincaré metric in one higher dimension via an ``holographic formula'' \cite{Graham2007}. GJMS-operators, in turn, play an important role in Physics, as their definition extends to Lorentzian manifolds: they are generalizations of the Yamabe operator and the conformally covariant powers of the wave operator on Minkowski's space \cite{Juhl2009}. Moreover, the integral of the $Q$-curvature satisfies the so-called Chern-Gauss-Bonnet formula \cite{Juhl2009}, which links the integral of some function of the $Q$-curvature to the Euler characteristic of the manifold (as the Gauss-Bonnet formula did with the Gaussian curvature). In $\mathbb{R}^4$, the integral of $Q$-curvature can tell us whether a metric is normal and, in that case, is strictly related to the behavior of the isoperimetric ratios \cite{Chang2000}.

In what follows, we will deal only with the 2- and 4-dimensional cases. A more complete introduction to 2, 4 and higher dimensional $Q$-curvature can be found in \cite{Chang2008}. See \cite{Branson1994} for a generic definition of $Q$-curvature and \cite{Juhl2009} for explicit formulas.

In dimension 2 the $Q$-curvature is essentially the usual Gaussian curvature. As we have already seen, this leads to the classical Lioville equation. In dimension 4, this quantity starts to become more interesting.
\begin{definition}
Let $(M,g)$ be a 4-dimensional Riemannian manifold. Let $\text{Ric}_g$ be its Ricci curvature, $R_g$ its scalar curvature and $\Delta_g$ its Laplace-Beltrami operator. The \emph{$Q$-curvature} of $M$ is defined as
\[
Q_g := -\frac{1}{12}\left( \Delta_g R_g - R_g^2 + 3|\text{Ric}_g|^2 \right).
\]
\end{definition}
Conformally rescaling the metric, $\hat{g}_{ij} = \text{e}^{2\varphi} g_{ij}$ for some smooth function $\varphi$ on $M$, then the $Q$-curvature transforms as follows
\begin{equation} \label{eq:QconformalTransformation}
P_g \varphi + 2Q_g = 2Q_{\hat{g}} \text{e}^{4\varphi}
\end{equation}
(see for example \cite[Chapter 4]{Chang2004}), where $P_g$ is the \emph{Paneitz operator} \cite{Paneitz2008}
\[
P_g \varphi := \Delta_g^2 \varphi + \text{div}_g \left(\frac{2}{3} R_g g - 2 \text{Ric}_g\right) \dif \varphi.
\]

Observe that, if we take $M = \mathbb{R}^4$ and $g$ equal to the standard Euclidean metric and consider $\hat{g}$ conformal to $g$ and such that $Q_{\hat{g}} \equiv \bar{Q} \in \mathbb{R}$, then equation \eqref{eq:QconformalTransformation} becomes
\[
\Delta_g^2 \varphi = 2\bar{Q} \text{e}^{4\varphi}.
\]
Setting $u := 4\varphi$ and $\bar{Q}=\frac{1}{8}$ and taking into account that the Laplace-Beltrami operator in $\mathbb{R}^4$ endowed with the Euclidean metric is the standard Laplacian, we finally end up with the 4-dimensional Liouville equation
\[
\Delta^2 u(x) = \text{e}^{u(x)}, \quad x \in \mathbb{R}^4.
\]

As for dimension 2, a complete classification is known for solutions of the 4-dimensional Liouville equation that have finite volume:

\begin{theorem}[\textsc{Lin} \cite{Lin1998}]
  Suppose that $u$ is a solution of
  \[
  \begin{cases}
    \Delta^2 u = 6 \text{\emph{e}}^{4u} \quad \text{in } \mathbb{R}^4 \\
    \text{\emph{e}}^{4u} \in L^1(\mathbb{R}^4)
  \end{cases},
  \]
  then the following statements hold.
  \begin{enumerate}
    \item After an orthorgonal transformation, $u(x)$ can be represented by
    \[
    \begin{split}
      u(x) &= \frac{3}{4\pi^2} \int_{\mathbb{R}^4} \log\left(\frac{|y|}{|x-y|}\right) \text{\emph{e}}^{4u(y)} \dif y - \sum_{j=1}^4 a_j(x_j - x_j^0)^2 + c_0 \\
           &= -\sum_{j=1}^4 a_j(x_j - x_j^0)^2 - \alpha \log|x| + c_0 + \text{O}(|x|^{-\tau})
    \end{split}
    \]
    for some $\tau>0$ and for large $|x|$, where $a_j \geq 0$ and $c_0$ are constants, and $x^0 = (x_1^0,x_2^0,x_3^0,x_4^0) \in \mathbb{R}^4$. Moreover, if $a_i \neq 0$ for all $i$, then $u$ is symmetric with respect to the hyperplane $\{x \mid x_i = x_i^0 \}$. If $a_1=a_2=a_3=a_4 \neq 0$, then $u$ is radially symmetric with respect to $x^0$.
    \item Let
    \[
    \alpha = \frac{3}{4\pi^2} \int_{\mathbb{R}^4} \text{\emph{e}}^{4u(y)} \dif y,
    \]
    then $\alpha \leq 2$. Moreover, if $\alpha = 2$, then
    \[
    u(x) = \log \frac{2\lambda}{1 + \lambda^2|x-x_0|^2}.
    \]
  \end{enumerate}
\end{theorem}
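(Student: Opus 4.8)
The plan is to follow the potential-theoretic and moving-plane strategy used by Chen and Li in dimension two, adapted to the bi-Laplacian. Since the fundamental solution of $\Delta^2$ in $\mathbb{R}^4$ is $\frac{1}{8\pi^2}\log\frac{1}{|x|}$, I would first introduce
\[
v(x) := \frac{3}{4\pi^2}\int_{\mathbb{R}^4}\log\frac{|y|}{|x-y|}\,\text{e}^{4u(y)}\,\dif y ,
\]
observing that subtracting $\log|y|$ inside the kernel makes the integrand $\mathrm{O}(|x|/|y|)$ for $|y|\gg|x|$, so the hypothesis $\text{e}^{4u}\in L^1$ already forces absolute convergence. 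Differentiating under the integral gives $\Delta^2 v = 6\,\text{e}^{4u}$ on $\mathbb{R}^4$, and since $\Delta\log|x|^{-1} = -2|x|^{-2}<0$ in $\mathbb{R}^4$ the function $v$ is superharmonic with $\Delta v(x)\to 0$; more precisely $|x|^2\Delta v(x)\to -2\alpha$ and $v(x)/\log|x|\to-\alpha$ as $|x|\to\infty$, where $\alpha := \frac{3}{4\pi^2}\int_{\mathbb{R}^4}\text{e}^{4u}$. These asymptotics are the analytic heart of the proof; they follow by splitting the integral over $\{|y|<|x|/2\}$, $\{|x-y|<|x|/2\}$ and the remaining region and estimating each contribution with the help of $\text{e}^{4u}\in L^1$.

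The second step is to show that $h := u-v$, which is biharmonic on all of $\mathbb{R}^4$, is in fact a polynomial of degree at most two. The input is a growth bound: the potential estimates above together with $\text{e}^{4u}\in L^1$ force $u$, hence $h$, to grow at most quadratically — a nonzero linear part in $h$ would make $\text{e}^{4u}$ non-integrable over a half-space, and higher-degree terms are ruled out by a polyharmonic Liouville-type theorem once the growth is under control. Completing squares and absorbing the linear part into the center (it must vanish along any direction where the quadratic coefficient is zero) yields $h(x) = -\sum_{j=1}^4 a_j(x_j-x_j^0)^2 + c_0$, with $a_j\ge 0$ since otherwise $u\to+\infty$ along a coordinate direction and $\text{e}^{4u}\notin L^1$. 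Together with $v(x) = -\alpha\log|x| + \mathrm{O}(1)$ this produces both displayed representations of $u$ and the expansion with remainder $\mathrm{O}(|x|^{-\tau})$. The hyperplane symmetry, and radiality when all the $a_j$ coincide, then follows from the method of moving planes: when every $a_i>0$ one has $-\Delta u(x)>0$ and $u(x)\to-\infty$ for large $|x|$, which is exactly what lets one start the planes at infinity and slide them to $\{x_i=x_i^0\}$; equality of all $a_i$ combined with the translation symmetry already obtained upgrades this to radial symmetry about $x^0$.

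For part (2) I would derive $\alpha\le 2$ and the rigidity via a Pohozaev identity. The equation $\Delta^2 u=6\,\text{e}^{4u}$ is invariant under $u\mapsto u(\lambda\,\cdot)+\log\lambda$, whose infinitesimal generator is $x\cdot\nabla u+1$; multiplying the equation by $x\cdot\nabla u+1$, integrating over $B_R$, integrating by parts, and letting $R\to\infty$ with the expansion $u(x)=-\sum a_j(x_j-x_j^0)^2-\alpha\log|x|+c_0+\mathrm{O}(|x|^{-\tau})$ produces an identity of the schematic form $\alpha(2-\alpha)=(\text{a nonnegative contribution of the }a_j)$, hence $\alpha\le 2$ with equality only if every $a_j=0$. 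When $\alpha=2$ one therefore has $u=v+c_0$ with $v$ radial about some point; the problem reduces to the radial fourth-order ODE — equivalently the second-order system $\Delta u=w$, $\Delta w=6\,\text{e}^{4u}$ with $w<0$ — and integrating it with the boundary data $u'(0)=0$ and $r\,u'(r)\to-2$ forces $u(x)=\log\frac{2\lambda}{1+\lambda^2|x-x_0|^2}$ for some $\lambda>0$.

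The main obstacles are the precise asymptotic analysis of the potential $v$ in the first step — and, closely tied to it, pinning down the remainder $\mathrm{O}(|x|^{-\tau})$ in the expansion of $u$ — and making the moving-plane argument work for a fourth-order equation, since $\Delta^2$ carries no maximum principle. For the latter one must either recast the problem as the cooperative second-order system $\Delta u=w$, $\Delta w=6\,\text{e}^{4u}$ and verify the sign condition $w=\Delta u<0$ near infinity (this is exactly where the quantitative decay $\Delta u(x)=-c+o(1)$ with $c>0$ when some $a_j>0$, or $\Delta u(x)\to 0$ from below at a definite rate when all $a_j=0$, becomes indispensable), or else run the moving planes directly on the integral equation satisfied by $u-v$. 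Once these decay rates are in hand the remaining steps — completing the Pohozaev computation and the ODE analysis — are careful but routine bookkeeping with the representation formula.
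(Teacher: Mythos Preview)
The paper does not contain a proof of this statement: it is quoted verbatim as a known classification result of Lin, with a citation to \cite{Lin1998}, and is used only as background to motivate the infinite-volume problem in dimension four. There is therefore nothing in the paper to compare your proposal against.

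For what it is worth, your outline is essentially the strategy of Lin's original paper: represent $u$ as the logarithmic potential $v$ plus a biharmonic remainder, use a Liouville theorem for biharmonic functions with controlled growth to force the remainder to be a concave quadratic, run moving planes (recast as a cooperative second-order system or on the integral equation) for the symmetry, and obtain $\alpha\le 2$ and rigidity from a Pohozaev identity. The points you flag as the main obstacles --- the precise asymptotics of $v$ and the lack of a maximum principle for $\Delta^2$ --- are exactly the technical difficulties Lin handles, and your proposed workarounds (splitting the integral into three regions, passing to the system $\Delta u=w$, $\Delta w=6\text{e}^{4u}$) are the standard ones. So the sketch is sound, but it is a reconstruction of \cite{Lin1998}, not of anything in the present paper.
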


Non-standard solutions with finite volume in $\mathbb{R}^4$ have been constructed explicitly also in \cite{Wei2008}.

In Section \ref{sec:4D} we will show that there exists a nontrivial solution with infinite volume also in dimension 4. Specifically, we will prove the following theorem.

\begin{theorem}\label{thm:4Dnon-trivialSolution}
  The Liouville equation $\Delta^2 u = \text{\emph{e}}^u$ in $\mathbb{R}^4$ admits a non-trivial solution $u \in C^{4,\alpha}(\mathbb{R}^4)$ with infinite volume. In particular, this solution is radial in the first three coordinates $(x_1,x_2,x_3)$, decaying to $-\infty$ as $O(|(x_1,x_2,x_3)|)$ (i.e.~at most linearly), and is periodic in the last coordinate $x_4$.
\end{theorem}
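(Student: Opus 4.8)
The approach mirrors the proof of Theorem~\ref{thm:2Dnon-trivialSolution}, with the one-dimensional profile $\log(2\sech^2 x)$ replaced by a radial profile on $\mathbb{R}^3$. First I would produce a radial solution $w_0=w_0(r)$, $r=\lvert x'\rvert$ with $x'=(x_1,x_2,x_3)$, of $\Delta^2 w_0=\mathrm{e}^{w_0}$ in $\mathbb{R}^3$ with $\mathrm{e}^{w_0}\in L^1(\mathbb{R}^3)$; since $3<4$ this is the subcritical regime for $\Delta^2$, and existence can be settled by an ODE shooting argument in the initial data $\bigl(w_0(0),(\Delta w_0)(0)\bigr)$ (or variationally). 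Its behaviour at infinity is the point: as the fundamental solution of $\Delta^2$ in $\mathbb{R}^3$ is $-\tfrac{1}{8\pi}\lvert x\rvert$, the representation $w_0=-\tfrac{1}{8\pi}\lvert\cdot\rvert\ast\mathrm{e}^{w_0}+(\text{harmonic})$ forces $w_0(x)=-\tfrac{\lvert x\rvert}{8\pi}\int_{\mathbb{R}^3}\mathrm{e}^{w_0}+\mathrm{o}(\lvert x\rvert)$, so $w_0\to-\infty$ at most linearly, and differentiating it yields the (exponential) decay of $\mathrm{e}^{w_0}$ and the asymptotics of $\nabla w_0$, $\Delta w_0$ needed below. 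Extending $w_0$ constantly in $x_4$ gives an $x_4$-independent solution of $\Delta^2u=\mathrm{e}^u$ in $\mathbb{R}^4$ with infinite volume: this is the trivial branch.

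For the bifurcation I would rescale the periodic variable, $s=\mu x_4\in\mathbb{R}/2\pi\mathbb{Z}$, so $\Delta_{\mathbb{R}^4}=\Delta_{x'}+\mu^2\partial_s^2$, and consider $F(\mu,v):=(\Delta_{x'}+\mu^2\partial_s^2)^2(w_0+v)-\mathrm{e}^{w_0+v}=0$ on weighted H\"older spaces $X=C^{4,\alpha}_\delta$, $Y=C^{0,\alpha}_{\delta+4}$ of functions on $\mathbb{R}^3\times[0,\pi]$ (Neumann at $s=0,\pi$, i.e.\ even and periodic in $x_4$) decaying in $r$, with $v\equiv0$ the trivial branch. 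Diagonalising the linearisation $L_\mu\phi=(\Delta_{x'}+\mu^2\partial_s^2)^2\phi-\mathrm{e}^{w_0}\phi$ over the Fourier modes $\phi=\sum_{k\ge0}\psi_k(r)\cos(ks)$ gives $(\Delta_{\mathbb{R}^3}-k^2\mu^2)^2\psi_k=\mathrm{e}^{w_0}\psi_k$. To locate the bifurcation point and get simplicity at once, I would factor the mode equation: for $\nu>0$, writing $\zeta=(\Delta_{\mathbb{R}^3}-\nu)\psi$ turns $(\Delta_{\mathbb{R}^3}-\nu)^2\psi=\mathrm{e}^{w_0}\psi$ into $\zeta=K_\nu\zeta$ for the positive, compact, positivity-improving operator $K_\nu:=(-\Delta_{\mathbb{R}^3}+\nu)^{-1}\,\mathrm{e}^{w_0}\,(-\Delta_{\mathbb{R}^3}+\nu)^{-1}$ on the radial sector. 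By Krein--Rutman its top eigenvalue $\Lambda(\nu)$ is simple; $\nu\mapsto\Lambda(\nu)$ is continuous and strictly decreasing, with $\Lambda(\nu)\to0$ as $\nu\to\infty$ and $\Lambda(0)>1$ (because, testing with a dilated bump $\chi(\cdot/\lambda)$ as $\lambda\to\infty$, the quotient $\int\mathrm{e}^{w_0}\psi^2/\lVert\Delta\psi\rVert_{L^2}^2$ blows up). Hence there is a unique $\nu_0>0$ with $\Lambda(\nu_0)=1$; put $\mu_0:=\sqrt{\nu_0}$. Then the mode $k=1$ contributes exactly $\mathbb{R}\phi_0$, $\phi_0=\psi_0(r)\cos s$ with $\psi_0>0$; modes $k\ge2$ contribute nothing since $\Lambda(k^2\nu_0)<\Lambda(\nu_0)=1$; and the mode $k=0$, governed by $\Delta^2_{\mathbb{R}^3}\psi=\mathrm{e}^{w_0}\psi$, contributes nothing by a growth argument (the analogue of Lemma~\ref{lemma:candidateLambda}) together with the choice of weight. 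Thus $\ker L_{\mu_0}=\mathbb{R}\phi_0$.

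The main obstacle --- exactly as in the two-dimensional case, now compounded by the fourth order and by $w_0$ being non-explicit --- is to prove that $L_{\mu_0}\colon X\to Y$ is Fredholm of index $0$. I would argue modewise: since $\mathrm{e}^{w_0}$ decays exponentially, $L_\mu$ is a relatively compact perturbation of $(\Delta_{x'}+\mu^2\partial_s^2)^2$, and the weighted Schauder estimate (Theorem~\ref{thm:weightedSchauder}) furnishes the semi-Fredholm bound $\lVert v\rVert_X\lesssim\lVert L_{\mu_0}v\rVert_Y+\lVert v\rVert_{\mathrm{loc}}$; the same estimate applied to the formal adjoint in the dual weight upgrades this to index $0$ (the analogue of Lemma~\ref{lemma:linearizedFredholm}). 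The delicate bookkeeping is the choice of $\delta$: it must avoid the indicial set of $\Delta^2$ in $\mathbb{R}^3$ --- the exponents of the radial biharmonics $1,r,r^2,r^{-1}$ --- and lie in the range giving index $0$; for the modes $k\ge1$ the model operator $(\Delta_{\mathbb{R}^3}-k^2\mu_0^2)^2$ has only exponential asymptotics and is unproblematic, so it is the $k=0$ sector that pins down the admissible range of $\delta$ (and where the growth argument above must also exclude a kernel).

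Finally I would check the transversality hypothesis of Theorem~\ref{thm:1Dbifurcation}. From $\partial_\mu L_\mu=4\mu\,\partial_s^2(\Delta_{x'}+\mu^2\partial_s^2)$ one gets $\partial_\mu L_\mu\big|_{\mu_0}\phi_0=-4\mu_0\,(\Delta_{\mathbb{R}^3}-\mu_0^2)\psi_0\,\cos s$, and pairing with $\phi_0$ --- which spans the cokernel by formal self-adjointness --- yields a positive multiple of $\lVert\nabla\psi_0\rVert_{L^2}^2+\mu_0^2\lVert\psi_0\rVert_{L^2}^2$, so the crossing is transversal. Theorem~\ref{thm:1Dbifurcation} then gives a $C^1$ curve $(\mu(t),v(t))$, $v(t)=t\phi_0+\mathrm{o}(t)$, bifurcating from $(\mu_0,0)$; elliptic bootstrapping gives $v(t)\in C^{4,\alpha}$, so $u=w_0+v(t)$ is a $C^{4,\alpha}(\mathbb{R}^4)$ solution of $\Delta^2u=\mathrm{e}^u$. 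It is non-trivial because $\psi_0\not\equiv0$, radial in $(x_1,x_2,x_3)$, $2\pi/\mu(t)$-periodic in $x_4$, tends to $-\infty$ at most linearly in $\lvert(x_1,x_2,x_3)\rvert$ since $w_0$ does and $v(t)$ is bounded, and has infinite volume since $\int_{\mathbb{R}^4}\mathrm{e}^u=\int_{\mathbb{R}^3}\!\int_{\mathbb{R}}\mathrm{e}^{w_0+v}\,dx_4\,dx'=+\infty$, the integrand being positive and $x_4$-periodic. (The sign of the first nonlinear correction, hence sub- vs.\ supercriticality of the branch, can then be read off from Proposition~\ref{prop:bifShape}.)
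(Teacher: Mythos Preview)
Your route is genuinely different from the paper's, and the part treating the modes $k\ge 1$ is an attractive sharpening: the Krein--Rutman argument on $K_\nu=(-\Delta+\nu)^{-1}e^{w_0}(-\Delta+\nu)^{-1}$ is a clean way to produce a \emph{simple} crossing at a well-defined $\nu_0$, and your transversality computation is correct. If this goes through, you get more than the paper does --- a $C^1$ branch with local uniqueness and, via Proposition~\ref{prop:bifShape}, information on sub/supercriticality. The paper, by contrast, deliberately avoids the simple-eigenvalue theorem: it rewrites the problem as $u=\Delta^{-2}(e^{u_0}(e^u-1))$, shows this is a compact perturbation of the identity, and then applies Krasnosel'skii's Theorem~\ref{thm:krasnoselskii} by proving only that the number of negative eigenvalues of $\Delta^{-2}\circ L_\lambda$ jumps as $\lambda$ grows (Lemmas~\ref{lemma:negativeFunctions}--\ref{lemma:negativeSpace}). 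This requires no knowledge of the kernel's dimension at the bifurcation value and no transversality.

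The gap in your proposal is precisely the step the paper says forces this more abstract route: the $k=0$ mode. You need that $\Delta_{\mathbb{R}^3}^2\psi=e^{w_0}\psi$ has no nonzero radial solution in your weighted space, and you invoke ``a growth argument (the analogue of Lemma~\ref{lemma:candidateLambda}) together with the choice of weight.'' But Lemma~\ref{lemma:candidateLambda} works only because the one-dimensional profile is explicit and the linearized ODE is solved by Legendre functions whose asymptotics are tabulated; none of this is available for the non-explicit $w_0$. Nor does the weight resolve it cheaply: a putative $k=0$ kernel element behaves like $c/r$ at infinity (the only decaying radial biharmonic in $\mathbb{R}^3$), so to exclude it by decay you must take $\delta>1$, but the indicial exponents of $\Delta^2$ on radial functions are $\{-1,0,1,2\}$ and pushing $\delta$ past $1$ changes the Fredholm index of the $k=0$ block away from zero --- exactly the tension you flag but do not resolve. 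Since $e^{w_0}$ is not explicit, there is no evident monotonicity, positivity or ODE argument that rules out $1$ lying in the spectrum of $\Delta^{-2}e^{w_0}$ on the radial $k=0$ sector, and without that your kernel may fail to be one-dimensional and Theorem~\ref{thm:1Dbifurcation} does not apply. The paper's Krasnosel'skii argument sidesteps this entirely: it never needs to know whether the $k=0$ sector contributes to the kernel, only that the total negative index is finite for each $\lambda$ and unbounded as $\lambda\to\infty$.
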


Again, the philosophy is: proving the existence of a finite-volume solution in dimension 3, extending that solution trivially to $\mathbb{R}^4$ and then using a bifurcation theorem to prove that there is a nontrivial solution which is the perturbation of the trivial one. In this case, the existence of a trivial solution already (i.e.~a solution of $\Delta^2 u = \text{e}^u$ in $\mathbb{R}^3$) is not completely obvious and could be obtained only implicitly (Subsection \ref{subsec:trivial}). As for the non-trivial solution, the bifurcation theorem we employed is the more sophisticated \emph{Krasnosel'skii's Index Theorem} \ref{thm:krasnoselskii}. The reason that prevents us from using simpler bifurcation results as in the case of dimension 2 is that all the functions involved in higher dimension cannot be expressed explicitly. Hence, even if Theorem \ref{thm:2Dnon-trivialSolution} and Theorem \ref{thm:4Dnon-trivialSolution} look quite similar, the two proofs will be notably different, with the second having to resort to more abstract machinery, such as degree theory.

\subsection{Some perspectives}
The argument exposed in this work seems far from being the only way of finding nontrivial solutions of the Liouville equation. For instance, in four dimensions, one could try to follow the same procedure with a trivial solution in $\mathbb{R}^2$ and two parameters\footnote{Observe that choosing to have a trivial solution in dimension 1 leads nowhere. Indeed, we would be looking for solutions of the ODE
\[
u^{(4)}(x) = \text{e}^{u(x)}, \quad \forall x \in \mathbb{R}.
\]
Integrating this equation, one immediately finds
\[
u^{(3)}(y) - u^{(3)}(x) = \int_x^y \text{e}^{u(s)} \dif s > 0, \quad \forall x,y \in \mathbb{R}, x \neq y,
\]
which means that $\lim_{x \rightarrow -\infty} u^{(3)}(x) < \lim_{x \rightarrow +\infty} u^{(3)}(x)$. But this is a contradiction: as we are asking $u$ to have finite volume we need that $u$ goes to $-\infty$ both at $-\infty$ and $+\infty$, and so $\lim_{x \rightarrow -\infty} u^{(3)}(x) \geq 0$ and $\lim_{x \rightarrow +\infty} u^{(3)}(x) \leq 0$.}, but even this might not exhaust all possibilities. Another option, indeed, might be to bifurcate radially. Moreover, similarly to what happens for Delaunay surfaces \cite{Delaunay1841} (whose equations are somehow analogous to the ones we are considering), we might be able to connect the non-trivial solutions of Theorem \ref{thm:4Dnon-trivialSolution} to the radial solution by some global bifurcation theorem, maybe following the ideas of \cite{Dancer2001}.

Another aspect which is worth considering is, of course, going to higher dimensions. As the methods employed in Section \ref{sec:4D} are quite general, it seems reasonable to believe that the same argument can be generalized to any even dimension. Similarly to dimensions 2 and 4, the finite-volume case has already been classified in the works of L.~Martinazzi \cite{Martinazzi2008}, \cite{Martinazzi2008b} and \cite{Martinazzi2014} (with A.~Hyder). The existence of large volume solutions has been studied by the same author also in \cite{Martinazzi2013}.

Finally, other lines of research aimed at finding more general non-trivial solutions with infinite volume are possible. For instance, we might be able to ``glue'' the oscillating solutions obtained from the bifurcation into more complex solutions, in a manner similar to the one used to construct Delaunay $k$-noids starting from Delaunay unduloids and nodoids. In this way we could then obtain non-trivial solutions with infinite volume that are not a direct result of a bifurcation from a cylindrical solution. For an example of this phenomenon in a PDE context, see \cite{Malchiodi2009}.

\vspace*{10pt}
\vbox{
\begin{center}
  \textbf{Acknowledgments}
\end{center}
\vspace*{-5pt}
The author is grateful to Ali Hyder for providing ideas and details for Subsection \ref{subsec:trivial}, and to Andrea Malchiodi for encouraging the study of this problem and for the many useful discussions about it.}

\section{Preliminaries}
We list here some known theorems used in the proofs of the main results of this article.

\subsection{Bifurcation theory}
The results in this section are well known and can be found for instance in \cite{AmbrosettiProdi} and \cite{Kielhofer}. The only new contribution is Subsection \ref{subsubsec:bifShape}, in which we find a correct version of the formula discriminating between subcritical and supercritical bifurcations.
\begin{definition}
Let $X$ and $Y$ be Banach spaces, and $F \in C^2(\mathbb{R} \times X,Y)$ be such that $F(\lambda,0)=0$ for all $\lambda \in \mathbb{R}$. We say that $\lambda^*$ is a \emph{bifurcation point} for $F$ (from the trivial solution $u \equiv 0$) if there is a sequence of solutions $(\lambda_n, u_n)_{n \in \mathbb{N}} \subset \mathbb{R} \times X$, with $u_n \neq 0$ for each $n \in \mathbb{N}$, that converges to $(\lambda^*,0)$.
\end{definition}

A direct consequence of the Implicit Function Theorem is the following necessary condition for bifurcation.

\begin{proposition}\label{prop:bifNecessaryCondition}
  A necessary condition for $\lambda^*$ to be a bifurcation point for $F$ is that $F_u(\lambda^*,0) := \pd{F}{u}(\lambda^*,0)$ is not invertible.
\end{proposition}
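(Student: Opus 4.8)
The plan is to derive the statement as a direct consequence of the Implicit Function Theorem, arguing by contraposition. Suppose that $F_u(\lambda^*,0)$ \emph{is} invertible as a bounded linear operator from $X$ to $Y$; I will show that $\lambda^*$ cannot then be a bifurcation point, which is exactly the contrapositive of the claim.

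First I would set up the auxiliary map. Since $F(\lambda,0)=0$ for every $\lambda$, the point $(\lambda^*,0)$ is a zero of $F$. Because $F \in C^2(\mathbb{R}\times X, Y)$, in particular $F$ is $C^1$ near $(\lambda^*,0)$ and its partial derivative $F_u(\lambda^*,0)$ is, by hypothesis, a bounded linear isomorphism $X \to Y$. The Implicit Function Theorem then applies with $\lambda$ playing the role of the parameter and $u$ the unknown: there exist a neighborhood $(\lambda^*-\delta,\lambda^*+\delta)$ of $\lambda^*$ in $\mathbb{R}$, a neighborhood $U$ of $0$ in $X$, and a $C^1$ map $\lambda \mapsto u(\lambda)$ from $(\lambda^*-\delta,\lambda^*+\delta)$ into $U$ such that, for $(\lambda,u) \in (\lambda^*-\delta,\lambda^*+\delta)\times U$, one has $F(\lambda,u)=0$ if and only if $u = u(\lambda)$.

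Next I would invoke uniqueness to pin down the local solution set. The constant branch $\lambda \mapsto 0$ is a $C^1$ curve of solutions through $(\lambda^*,0)$ lying in $U$ (after shrinking $U$ if necessary), so by the uniqueness clause of the Implicit Function Theorem we must have $u(\lambda)\equiv 0$ on $(\lambda^*-\delta,\lambda^*+\delta)$. Consequently the only solutions of $F(\lambda,u)=0$ in the neighborhood $(\lambda^*-\delta,\lambda^*+\delta)\times U$ of $(\lambda^*,0)$ are the trivial ones $u=0$. But a bifurcation point requires a sequence $(\lambda_n,u_n)$ of solutions with $u_n\neq 0$ converging to $(\lambda^*,0)$; eventually such a sequence would enter $(\lambda^*-\delta,\lambda^*+\delta)\times U$, forcing $u_n=0$ for large $n$, a contradiction. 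Hence $\lambda^*$ is not a bifurcation point, completing the contrapositive.

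There is no serious obstacle here — the only points demanding a little care are the verification that $F$ is regular enough for the Implicit Function Theorem (which is immediate from $F\in C^2$) and the correct reading of the uniqueness part of that theorem to conclude that the implicitly defined branch \emph{coincides} with the known trivial branch rather than merely existing alongside it. Everything else is bookkeeping with neighborhoods and the definition of convergence in $\mathbb{R}\times X$.
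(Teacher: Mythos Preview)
Your argument is correct and matches the paper's own treatment: the paper does not give a detailed proof but simply states that the proposition is ``a direct consequence of the Implicit Function Theorem,'' which is precisely the contrapositive argument you wrote out.
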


Observe that, in order to prove non-uniqueness, it suffices to find a bifurcation point $\lambda^*$. In the 2D case, this will be achieved by means of the vary basic \emph{Bifurcation from the simple eigenvalue Theorem} \cite[Theorem 5.4.1]{AmbrosettiProdi}:

\begin{theorem}\label{thm:1Dbifurcation}
  Let $X$ and $Y$ be Banach spaces, $F \in C^2(\mathbb{R} \times X, Y)$ be such that $F(\lambda, 0) = 0$ for all $\lambda \in \mathbb{R}$. Let $\lambda^*$ be such that $L = F_u(\lambda^*, 0) \equiv \pd{F}{u}(\lambda^*, 0)$ has one-dimensional kernel $V = \{tu^* \mid t \in \mathbb{R} \}$ and closed range $R$ with codimension 1. Letting $M := F_{u \lambda}(\lambda^*,0)$, assume moreover that $M[u^*] \notin R$. Then $\lambda^*$ is a bifurcation point for $F$. In addition, the set of non-trivial solutions of $F=0$ is, near $(\lambda^*, 0)$, a unique $C^1$ cartesian curve with parametric representation on $V$.
\end{theorem}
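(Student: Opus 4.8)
The plan is to carry out a Lyapunov--Schmidt reduction. First I would fix the functional-analytic splittings. Since $V$ is finite-dimensional it admits a closed complement $W$ in $X$, so $X = V \oplus W$ with bounded projections; and since $R$ is closed of codimension $1$ in $Y$, it too admits a (one-dimensional) complement $Z$, so $Y = R \oplus Z$ with bounded projection $Q \colon Y \to R$ along $Z$. Writing $u = s u^* + w$ with $s \in \mathbb{R}$, $w \in W$, the equation $F(\lambda, u) = 0$ splits into the \emph{auxiliary equation} $QF(\lambda, s u^* + w) = 0$ and the \emph{bifurcation equation} $(I-Q)F(\lambda, s u^* + w) = 0$.

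Second, I would solve the auxiliary equation. The map $H(\lambda, s, w) := QF(\lambda, s u^* + w)$ is $C^2$, vanishes at $(\lambda^*, 0, 0)$, and $H_w(\lambda^*, 0, 0) = QL|_W = L|_W$ is an isomorphism of $W$ onto $R$ (injective because $\ker L = V$ meets $W$ trivially, surjective onto the range $R$, hence open by the open mapping theorem). By the Implicit Function Theorem there is a unique $C^2$ map $w = \psi(\lambda, s)$ near $(\lambda^*, 0)$ with $\psi(\lambda^*, 0) = 0$ and $H(\lambda, s, \psi(\lambda,s)) = 0$. Since $F(\lambda, 0) = 0$ gives $H(\lambda, 0, 0) = 0$, uniqueness forces $\psi(\lambda, 0) = 0$ for all $\lambda$, whence $\psi_\lambda(\lambda, 0) = 0$; and differentiating the auxiliary identity in $s$ at $(\lambda^*, 0)$, using $QL = L$ on $W$ and the injectivity of $L|_W$, gives $\psi_s(\lambda^*, 0) = 0$.

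Third comes the bifurcation equation. Substituting $\psi$, set $\beta(\lambda, s) := (I-Q)F(\lambda, s u^* + \psi(\lambda, s)) \in Z$, a $C^2$ scalar-valued function after identifying $Z \cong \mathbb{R}$. Because $F(\lambda, 0) = 0$ we have $\beta(\lambda, 0) \equiv 0$, so I can factor $\beta(\lambda, s) = s\,\gamma(\lambda, s)$ with $\gamma(\lambda, s) := \int_0^1 \beta_s(\lambda, \tau s)\, \dif\tau$, which is $C^1$ precisely because $\beta$ is $C^2$; this is where the $C^2$ hypothesis on $F$ is used. A direct computation of mixed second derivatives of $F$ at $(\lambda^*, 0)$, using the vanishing of $\psi(\lambda^*, \cdot)$, $\psi_\lambda$, $\psi_s$ noted above, the symmetry $F_{\lambda u}(\lambda^*,0) = F_{u\lambda}(\lambda^*,0) = M$, and $L[\,\cdot\,] \in R$, yields $\gamma(\lambda^*, 0) = 0$ and $\gamma_\lambda(\lambda^*, 0) = (I-Q)M[u^*]$, which is nonzero in $Z$ exactly because $M[u^*] \notin R$. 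The Implicit Function Theorem applied to $\gamma$ then gives a unique $C^1$ function $\lambda = \lambda(s)$ near $s = 0$ with $\lambda(0) = \lambda^*$ and $\gamma(\lambda(s), s) = 0$.

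Finally I would read off the conclusions. Near $(\lambda^*, 0)$ the zero set of $F$ is $\{(\lambda, 0)\} \cup \{(\lambda(s), s u^* + \psi(\lambda(s), s)) : |s| \text{ small}\}$, since $\beta(\lambda, s) = s\,\gamma(\lambda,s) = 0$ forces either $s = 0$ (the trivial branch) or $\gamma(\lambda, s) = 0$, i.e.\ $\lambda = \lambda(s)$. For $s \neq 0$ the point $u = s u^* + \psi(\lambda(s), s)$ is nonzero and $(\lambda(s), u) \to (\lambda^*, 0)$ as $s \to 0$, so $\lambda^*$ is a bifurcation point; the $C^1$ curve $s \mapsto (\lambda(s), s u^* + \psi(\lambda(s), s))$ is the unique curve of non-trivial solutions, and since $\psi_\lambda(\lambda^*,0) = \psi_s(\lambda^*,0) = 0$ its $X$-component has derivative $u^*$ at $s = 0$, which is the asserted parametric representation on $V$. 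The main obstacle is the bifurcation-equation step: one must check that dividing by $s$ retains enough regularity (hence the $C^2$ assumption) and push through the second-order derivative computation showing $\gamma_\lambda(\lambda^*, 0) = (I-Q)M[u^*] \neq 0$, which is precisely where the transversality hypothesis $M[u^*] \notin R$ is decisive.
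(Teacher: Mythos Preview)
The paper does not supply its own proof of this theorem: it is quoted as a well-known result and attributed to \cite[Theorem 5.4.1]{AmbrosettiProdi}, so there is nothing in the paper to compare against directly. Your argument is the standard Crandall--Rabinowitz proof via Lyapunov--Schmidt reduction, and it is correct as written; this is exactly the approach taken in the reference the paper cites.
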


Unfortunately, this very simple result has no application in the 4D case, because it does not seem possible to have explicit solutions of the differential equations involved. The result will be proved anyway using the more sophisticated \emph{Krasnosel'skii's Index Theorem} \cite[Theorem 56.2]{Krasnoselskii1984}:

\begin{theorem}\label{thm:krasnoselskii}
  Let $A$ be a completely continuous operator and assume that $\lambda^*$ is a point where the index of $u - A(\lambda,u)$ changes. Then $\lambda^*$ is a bifurcation point for equation $u = A(\lambda,u)$.
\end{theorem}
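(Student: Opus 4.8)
The plan is to deduce the index-change criterion from the Leray--Schauder degree, specifically from its homotopy invariance together with the fact that an isolated bifurcation-free parameter interval forces a locally constant index. Suppose, for contradiction, that $\lambda^*$ is \emph{not} a bifurcation point for $u = A(\lambda,u)$. Then there is a neighbourhood $\mathcal{U} = (\lambda^* - \delta, \lambda^* + \delta) \times \{\|u\| \le r\}$ of $(\lambda^*, 0)$ containing no nontrivial solution, i.e.\ the only solution of $u = A(\lambda, u)$ in $\mathcal{U}$ is the trivial one $u = 0$. In particular, for each fixed $\lambda$ with $0 < |\lambda - \lambda^*| < \delta$, the map $u \mapsto u - A(\lambda, u)$ has $0$ as its only zero in the ball $\{\|u\| \le r\}$, so the index $\operatorname{ind}(I - A(\lambda,\cdot), 0)$ --- defined as the Leray--Schauder degree $\deg(I - A(\lambda,\cdot), B_\rho(0), 0)$ for small $\rho \le r$ --- is well defined there.

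The key step is the homotopy argument. Fix $\lambda_- \in (\lambda^* - \delta, \lambda^*)$ and $\lambda_+ \in (\lambda^*, \lambda^* + \delta)$. Consider the homotopy $H(t, u) := u - A\big((1-t)\lambda_- + t\lambda_+,\, u\big)$ for $t \in [0,1]$, $\|u\| \le r$. Since $A$ is completely continuous and jointly continuous in $(\lambda, u)$, $H$ is an admissible compact homotopy; and because $\mathcal{U}$ contains no nontrivial solution, $H(t,u) = 0$ with $\|u\| = \rho$ never happens for any $t \in [0,1]$ and any sufficiently small $\rho$. Homotopy invariance of the degree then yields $\operatorname{ind}(I - A(\lambda_-,\cdot),0) = \operatorname{ind}(I - A(\lambda_+,\cdot),0)$, i.e.\ the index does not change across $\lambda^*$, contradicting the hypothesis that $\lambda^*$ is a point where the index changes. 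Hence $\lambda^*$ must be a bifurcation point.

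I expect the main obstacle to be justifying the admissibility of the compact homotopy uniformly in the radius $\rho$ --- that is, verifying that one can choose a single small $\rho > 0$ for which $H(t, u) \neq 0$ on $\|u\| = \rho$ for \emph{all} $t \in [0,1]$ simultaneously. This follows from the assumed absence of nontrivial solutions in the whole cylinder $\mathcal{U}$ (so no zero of $H$ on the punctured ball exists at all for parameters in $[\lambda_-, \lambda_+]$), but it requires observing that the segment $[\lambda_-,\lambda_+]$ stays inside $(\lambda^*-\delta,\lambda^*+\delta)$ and that $0$ is the only solution there; one also needs that the index at a parameter is independent of the chosen radius $\rho \in (0, r]$, which is itself an excision/homotopy statement. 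The remaining ingredients --- existence and finiteness of the Leray--Schauder index for $I$ minus a completely continuous map near an isolated zero, and its homotopy invariance --- are standard and may be invoked directly (see \cite{Krasnoselskii1984}).
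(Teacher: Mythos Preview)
The paper does not give its own proof of this statement: Theorem~\ref{thm:krasnoselskii} is quoted in the Preliminaries section as a known result, with a direct citation to \cite[Theorem 56.2]{Krasnoselskii1984}, and is then invoked as a black box in Section~\ref{sec:4D}. There is therefore nothing in the paper to compare your argument against.

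That said, your proposal is the standard Leray--Schauder degree proof and is correct. The one point you flag as a potential obstacle --- choosing a single radius $\rho$ that works for all $t\in[0,1]$ --- is not actually an issue: once you have assumed that no nontrivial solutions exist in the full cylinder $(\lambda^*-\delta,\lambda^*+\delta)\times\{\|u\|\le r\}$, the only zero of $H(t,\cdot)$ in $\{\|u\|\le r\}$ is $u=0$ for every $t$, so any $\rho\in(0,r]$ works uniformly. The independence of the index from the choice of $\rho$ is likewise immediate from excision. Your sketch could be tightened into a complete proof with no new ideas needed.
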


\subsubsection{Shape of bifurcation}\label{subsubsec:bifShape}
In the relatively simple 2D case, we can know something about the shape of the bifurcating branch from the explicit computations on the solutions of the linearized problem.

\begin{definition}
  Let $\lambda^*$ be as in Theorem \ref{thm:1Dbifurcation}. We say that the bifurcation is \emph{supercritical} (\emph{subcritical}) if $\lambda \geq \lambda^*$ ($\lambda \leq \lambda^*$) along the bifurcating branch, in some ball around $(\lambda^*,0)$. If, along the bifurcating solution, $\lambda$ assumes values both bigger and smaller than $\lambda^*$ in all balls around $(\lambda^*,0)$, then the bifurcation is said to be \emph{transcritical}.
\end{definition}

\begin{proposition}\label{prop:bifShape}
  In the setting of Theorem \ref{thm:1Dbifurcation}, suppose that $F$ is $C^\infty$. Then
  \begin{enumerate}
    \item If $\langle \psi, F_{uu}(\lambda^*,0)[u^*,u^*] \rangle \neq 0$, then the bifurcation is transcritical.
    \item If instead $\langle \psi, F_{uu}(\lambda^*,0)[u^*,u^*] \rangle = 0$, let $u_2 \in L^{-1}\left(-\frac{1}{2} F_{uu}(\lambda^*,0)[u^*,u^*]\right)$ and consider
    \[
    \lambda_2 = -\frac{\langle \psi, F_{uu}(\lambda^*,0)[u^*,u_2] \rangle + \frac{1}{6} \langle \psi, F_{uuu}(\lambda^*,0)[u^*,u^*,u^*] \rangle}{\langle \psi, F_{\lambda,u}(\lambda^*,0)[u^*]\rangle}.
    \]
    If $\lambda_2>0$, then the bifurcation is supercritical; while, if $\lambda_2<0$, then the bifurcation is subcritical.
  \end{enumerate}
\end{proposition}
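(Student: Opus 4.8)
The plan is to reduce the problem to a finite‑dimensional bifurcation equation via Lyapunov–Schmidt reduction, then expand the implicitly defined bifurcating branch to third order in the parameter $t$ along the kernel direction $u^*$, and finally read off the sign of $\lambda - \lambda^*$ from the leading nontrivial Taylor coefficient. This is essentially the standard derivation (as in \cite{Kielhofer}); the point of the Proposition is to get the formula in case (2) right, since the naive version circulating in the literature omits the $\langle \psi, F_{uu}[u^*,u_2]\rangle$ term.

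\medskip

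\noindent\textbf{Step 1: Lyapunov–Schmidt setup.} Write $X = V \oplus W$ where $V = \ker L$ and $W$ is a closed complement, and $Y = R \oplus Z$ where $R = \operatorname{ran} L$ and $Z = \operatorname{span}\{\zeta\}$ is a one‑dimensional complement; let $\psi \in Y^*$ be the functional with $\ker \psi = R$ and normalized so that $\langle \psi, \zeta\rangle = 1$, and let $P\colon Y \to R$ be the associated projection ($P = \mathrm{Id} - \langle\psi,\cdot\rangle\zeta$). Decompose $u = t u^* + w$ with $w \in W$. The equation $F(\lambda, u) = 0$ splits into $P F(\lambda, tu^* + w) = 0$ and $\langle \psi, F(\lambda, tu^* + w)\rangle = 0$. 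Since $L|_W\colon W \to R$ is an isomorphism, the Implicit Function Theorem (using $F \in C^\infty$) solves the first equation for $w = w(\lambda, t)$, a $C^\infty$ function with $w(\lambda, 0) = 0$ and, by differentiating $PF(\lambda, tu^*+w)=0$ at $t=0$, also $w_t(\lambda^*, 0) = 0$. Substituting into the second equation yields the scalar bifurcation function
\[
\Phi(\lambda, t) := \langle \psi, F(\lambda, tu^* + w(\lambda, t))\rangle = 0,
\]
and since $\Phi(\lambda, 0) = 0$ for all $\lambda$, nontrivial solutions satisfy $\tilde\Phi(\lambda, t) := \Phi(\lambda,t)/t = 0$ (after a division, valid by Taylor with $\tilde\Phi$ still $C^\infty$).

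\medskip

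\noindent\textbf{Step 2: Taylor expansion of $\tilde\Phi$.} I expand $\Phi$ in $t$ at fixed $\lambda$ near $\lambda^*$, using $F(\lambda^*, 0)=0$, $L u^* = 0$, and the chain rule together with $w(\lambda^*,0)=w_t(\lambda^*,0)=0$. One computes $\Phi_t(\lambda^*,0) = \langle\psi, Lu^*\rangle = 0$; the first‑order‑in‑$\lambda$ correction gives the coefficient $\langle\psi, F_{\lambda u}(\lambda^*,0)[u^*]\rangle$, which is nonzero by the transversality hypothesis $M[u^*]\notin R$ of Theorem \ref{thm:1Dbifurcation}. The second $t$‑derivative at $(\lambda^*,0)$ is $\Phi_{tt}(\lambda^*,0) = \langle\psi, F_{uu}(\lambda^*,0)[u^*,u^*]\rangle$ (the $w_{tt}$ term drops because $Lw_{tt}\in R$). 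Hence $\tilde\Phi(\lambda, t) = c_1(\lambda - \lambda^*) + \tfrac12\langle\psi, F_{uu}[u^*,u^*]\rangle\, t + o(|\lambda-\lambda^*| + |t|)$ with $c_1 = \langle\psi, F_{\lambda u}(\lambda^*,0)[u^*]\rangle \neq 0$. If $\langle\psi, F_{uu}[u^*,u^*]\rangle \neq 0$, then solving $\tilde\Phi = 0$ for $\lambda - \lambda^*$ gives $\lambda - \lambda^* \sim -\tfrac{1}{2c_1}\langle\psi,F_{uu}[u^*,u^*]\rangle\, t$, which changes sign with $t$: this is case (1), transcritical.

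\medskip

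\noindent\textbf{Step 3: the degenerate case and the corrected $\lambda_2$.} When $\langle\psi, F_{uu}[u^*,u^*]\rangle = 0$, I push the expansion one order further. From $P F = 0$ at second order in $t$ one gets $L w_{tt}(\lambda^*,0) = -P F_{uu}(\lambda^*,0)[u^*,u^*] = -F_{uu}(\lambda^*,0)[u^*,u^*]$ (the projection is the identity here since the $Z$‑component vanishes by hypothesis), so $\tfrac12 w_{tt}(\lambda^*,0) = u_2 \in L^{-1}(-\tfrac12 F_{uu}(\lambda^*,0)[u^*,u^*])$, exactly the $u_2$ in the statement. Parametrize the branch as $\lambda(t) = \lambda^* + \lambda_2 t^2 + \ldots$ (the linear‑in‑$t$ term being absent precisely because the $t$‑coefficient of $\tilde\Phi$ vanished) and differentiate $\tilde\Phi(\lambda(t), t) = 0$ twice at $t=0$: the third $t$‑derivative of $\Phi$ contributes $\langle\psi, F_{uuu}(\lambda^*,0)[u^*,u^*,u^*]\rangle + 3\langle\psi, F_{uu}(\lambda^*,0)[u^*, w_{tt}(\lambda^*,0)]\rangle$ — and $w_{tt}(\lambda^*,0) = 2u_2$ — while the $\lambda$‑derivative contributes $2\lambda_2\, \langle\psi, F_{\lambda u}(\lambda^*,0)[u^*]\rangle \cdot 3!/(\text{combinatorial factor})$; collecting the factorials, vanishing of $\Phi_{ttt}(\lambda(t),t)$‑type terms yields
\[
\lambda_2 = -\,\frac{\langle\psi, F_{uu}(\lambda^*,0)[u^*,u_2]\rangle + \tfrac16\langle\psi, F_{uuu}(\lambda^*,0)[u^*,u^*,u^*]\rangle}{\langle\psi, F_{\lambda u}(\lambda^*,0)[u^*]\rangle},
\]
which is the claimed expression. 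Since $\lambda - \lambda^* = \lambda_2 t^2 + o(t^2)$ has the sign of $\lambda_2$ for small $t \neq 0$, $\lambda_2 > 0$ forces $\lambda \geq \lambda^*$ along the branch (supercritical) and $\lambda_2 < 0$ forces $\lambda \leq \lambda^*$ (subcritical). \textbf{The main obstacle} is purely bookkeeping: tracking the binomial/factorial coefficients correctly through the composite $t$‑derivatives of $\langle\psi, F(\lambda(t), tu^*+w(\lambda(t),t))\rangle$ and keeping straight which mixed partials survive the projections — it is exactly a slip in this bookkeeping that produces the incorrect formula the Proposition is meant to replace, so the care must go into verifying that the $\langle\psi, F_{uu}[u^*,u_2]\rangle$ term does not cancel and appears with the stated coefficient.
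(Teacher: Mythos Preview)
Your proof is correct and follows essentially the same approach as the paper: both expand the bifurcation equation to third order in $t$, identify $u_2$ via $L u_2 = -\tfrac12 F_{uu}(\lambda^*,0)[u^*,u^*]$ from the second-order term, and then project the third-order term by $\psi$ to extract $\lambda_2$. The only difference is packaging---you first perform a Lyapunov--Schmidt reduction and expand the scalar function $\tilde\Phi$, whereas the paper substitutes the parametrized branch $(\lambda(t),u(t))$ directly into $F$ and applies $\psi$ order by order; your $w_{tt}(\lambda^*,0)/2$ is exactly the paper's $u_2$, and the computations coincide.
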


\begin{remark}
  The formula for $\lambda_2$ that can be found in many books, like \cite[(I.6.11)]{Kielhofer} or \cite[(5.4.7)]{AmbrosettiProdi}, is not correct because it misses the term containing $u_2$. Observe that in general $u_2$ is different from 0 because of \eqref{eq:u2condition}.
\end{remark}

\begin{proof}
\begin{enumerate}
  \item This is well-known and can be found for instance in \cite[5.4.3.iv]{AmbrosettiProdi}. We summarize the proof here for convenience.

  We compute the first terms of the Taylor expansion of $F$ centered in $(\lambda^*,0)$: Let $\lambda_0 := \lambda^*$ and $u_1 := u^*$, then
  \[
  \begin{split}
  0 &= F(\lambda(t),u(t)) = F(\lambda_0 + \lambda_1 t + \lambda_2 t^2 + O(t^3), u_1 t + u_2 t^2 + O(t^3)) \\
    &= F(\lambda_0, 0) + (F_u(\lambda_0,0)[u_1] + \lambda_1 F_\lambda(\lambda_0,0))t \\
    &\quad + \left(F_u(\lambda_0,0)[u_2] + \frac{1}{2} F_{uu}(\lambda_0,0)[u_1,u_1] + \lambda_2 F_\lambda(\lambda_0,0) \right. \\
    &\qquad \quad \left. + \, \lambda_1 F_{\lambda u}(\lambda_0,0)[u_1] + \frac{1}{2} \lambda_1^2 F_{\lambda \lambda}(\lambda_0,0) \right) t^2 + O(t^3).
  \end{split}
  \]
  Hence we have:
  \[
  F(\lambda_0,0)=0,
  \]
  which is true by hypothesis;
  \[
  F_u(\lambda_0,0)[u_1] + \lambda_1 F_\lambda(\lambda_0,0)=0,
  \]
  which is also true because $F_u(\lambda_0,0)[u_1]=F_u(\lambda_0,0)[u^*]=0$ by hypothesis and $F_\lambda(\lambda_0,0)=0$ because $F(\lambda,0) \equiv 0$ for all $\lambda$;
  \[
  \begin{split}
  0 &= F_u(\lambda_0,0)[u_2] + \frac{1}{2} F_{uu}(\lambda_0,0)[u_1,u_1] + \lambda_2 F_\lambda(\lambda_0,0) \\
    &\qquad \quad + \, \lambda_1 F_{\lambda u}(\lambda_0,0)[u_1] + \frac{1}{2} \lambda_1^2 F_{\lambda \lambda}(\lambda_0,0) \\
    &= F_u(\lambda_0,0)[u_2] + \frac{1}{2} F_{uu}(\lambda_0,0)[u_1,u_1] + \lambda_1 F_{\lambda u}(\lambda_0,0)[u_1],
  \end{split}
  \]
  again because $F(\lambda,0) \equiv 0$ for all $\lambda$. Applying $\psi$ to that last equality, where $\coker F_u(\lambda^*,0) = \coker L = \langle \psi \rangle$, one gets
  \[
  0 = \frac{1}{2} \langle \psi, F_{uu}(\lambda^*,0)[u^*,u^*] \rangle +  \lambda_1 \langle \psi, F_{\lambda u}(\lambda^*,0)[u^*] \rangle,
  \]
  from which we obtain
  \[
  \lambda_1 = -\frac{1}{2} \frac{\langle \psi, F_{uu}(\lambda^*,0)[u^*,u^*] \rangle}{\langle \psi, F_{\lambda u}(\lambda^*,0)[u^*] \rangle}
  \]
  (the fraction is well defined because by the hypothesis of Theorem \ref{thm:1Dbifurcation} we already know that the denominator can't be 0, see also \cite[5.4.3.iv]{AmbrosettiProdi} or \cite[(I.6.3)]{Kielhofer}). If $\lambda_1 \neq 0$ we have then a transcritical bifurcation, proving (1).

  \item Assume now that $\lambda_1=0$. Again, in general, one has the following expansion
  \[
  \begin{split}
  0 &= F(\lambda(t),u(t)) = F(\lambda_0 + \lambda_2 t^2 + \lambda_3 t^3 + O(t^3), u_1 t + u_2 t^2 + u_3 t^3 + O(t^3)) \\
    &= F(\lambda_0, 0) + F_u(\lambda_0,0)[u_1] t \\
    &\quad + \left(F_u(\lambda_0,0)[u_2] + \frac{1}{2} F_{uu}(\lambda_0,0)[u_1,u_1] + \lambda_2 F_\lambda(\lambda_0,0) \right) t^2 \\
    &\quad + \left(F_u(\lambda_0,0)[u_3] + F_{uu}(\lambda_0,0)[u_1,u_2] + \frac{1}{6} F_{uuu}(\lambda_0,0)[u_1,u_1,u_1] \right. \\
    &\qquad \quad + \lambda_3 F_\lambda(\lambda_0,0) + \lambda_2 F_{\lambda,u}(\lambda_0,0)[u_1] \bigg ) t^3 + O(t^4).
  \end{split}
  \]
  As before, the first and the second summands are already known to be 0. The third term gives us the following condition:
  \begin{equation}\label{eq:u2condition}
  F_u(\lambda^*,0)[u_2] + \frac{1}{2} F_{uu}(\lambda^*,0)[u^*,u^*] = 0.
  \end{equation}
  The fourth term, instead, is the one from which we would like to extract the value of $\lambda_2$:
  \[
  \begin{split}
  F_u&(\lambda^*,0)[u_3] + F_{uu}(\lambda^*,0)[u^*,u_2] \\
  &+ \frac{1}{6} F_{uuu}(\lambda^*,0)[u^*,u^*,u^*] + \lambda_2 F_{\lambda,u}(\lambda^*,0)[u^*] = 0,
  \end{split}
  \]
  which leads to
  \begin{equation}\label{eq:lambda2}
  \lambda_2 = -\frac{\langle \psi, F_{uu}(\lambda^*,0)[u^*,u_2] \rangle + \frac{1}{6} \langle \psi, F_{uuu}(\lambda^*,0)[u^*,u^*,u^*] \rangle}{\langle \psi, F_{\lambda,u}(\lambda^*,0)[u^*]\rangle}
  \end{equation}
  (again, observe that the fraction is well defined because the denominator is not 0).

  At least implicitly, then, one can find $\lambda_2$. Indeed, $L: X \rightarrow Y$ factors through
  \[
  \tilde{L}: \frac{X}{\langle u^* \rangle} \rightarrow \{ y \in Y \mid \langle \psi, y \rangle = 0 \},
  \]
  which is invertible, so that we get
  \[
  u_2 = \tilde{L}^{-1} \left(-\frac{1}{2} F_{uu}(\lambda^*,0)[u^*,u^*]\right)
  \]
  to substitute in \eqref{eq:lambda2}. Observe indeed that, applying $\psi$ to \eqref{eq:u2condition}, one immediately gets that
  \[
  F_{uu}(\lambda^*,0)[u^*,u^*] \in \{ y \in Y \mid \langle \psi, y \rangle = 0 \}.
  \]
  Moreover, notice that
  \[
  \langle \psi, F_{uu}(\lambda^*,0)[u^*,v] \rangle = 0, \quad \forall v \in \langle u^* \rangle
  \]
  implies also that the $\lambda_2$ obtained in this way is well defined.

  If $\lambda_2 > 0$ we get a supercritical bifurcation, while if $\lambda_2 < 0$ we have a subcritical bifurcation, proving (2).
\end{enumerate}

\end{proof}

\subsection{Elliptic regularity}
Let $L$ be a linear partial differential operator of order 2 defined in an open subset $\Omega$ of $\mathbb{R}^n$, $n \geq 2$. Assume that $L$ can be written using the standard cartesian coordinates (and Einstein's notation) as follows
\[
Lu = a^{ij}(x)D_{ij} u + b^i(x)D_i u + c(x) u,
\]
with $a^{ij}=a^{ji}$ for each $1 \leq i,j \leq n$.

\begin{theorem} \label{thm:smoothness} \cite[4.2.1]{Shimakura}
If $L$ is an elliptic operator in $\Omega \subset \mathbb{R}^n$ open and if the coefficients of $L$ are of class $C^\infty$ in $\Omega$, then $A$ is \emph{hypoelliptic} in $\Omega$, namely: If $u$ is a distribution in an open subset $\Omega_1$ of $\Omega$ and if $Lu$ is of class $C^\infty$ in $\Omega_1$, then $u$ is of class $C^\infty$ in $\Omega_1$.
\end{theorem}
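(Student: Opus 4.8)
The statement is local, so I would fix an arbitrary point $x_0 \in \Omega_1$ and reduce to showing that $u$ is $C^\infty$ on a neighbourhood of $x_0$. The plan is to prove that $u$ belongs to the local Sobolev space $H^s_{\mathrm{loc}}$ near $x_0$ for \emph{every} $s \in \mathbb{R}$, and then to conclude by the Sobolev embedding $\bigcap_s H^s_{\mathrm{loc}} \subset C^\infty$. The starting point is the fact that a distribution is of finite order on any bounded neighbourhood of $x_0$, hence $u \in H^{-N}_{\mathrm{loc}}$ for some $N \in \mathbb{N}$; this is the only regularity of $u$ one is allowed to assume at the outset.

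The engine of the argument is an interior elliptic estimate with a gain of two derivatives: for cutoffs $\chi, \chi' \in C_c^\infty$ with $\chi' \equiv 1$ near $\mathrm{supp}\,\chi$,
\[
\|\chi u\|_{H^{s+2}} \leq C\big( \|\chi' Lu\|_{H^{s}} + \|\chi' u\|_{H^{s+1}} \big),
\]
with $C$ depending on $s$, the cutoffs, and the coefficients of $L$. I would prove this by freezing coefficients: write $L = L_0 + (L - L_0)$ with $L_0 = a^{ij}(x_0) D_{ij}$, which is constant-coefficient and elliptic, so the symmetric matrix $(a^{ij}(x_0))$ is definite and $|a^{ij}(x_0)\xi_i\xi_j| \geq c|\xi|^2$. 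For $L_0$ the estimate is immediate via the Fourier transform (equivalently, convolution with the Newtonian-type fundamental solution). On a sufficiently small ball the coefficients of the second-order part of $L - L_0$ are uniformly small, so that contribution is absorbed into the left-hand side, while the lower-order terms cost only one derivative and land in the $\|\chi' u\|_{H^{s+1}}$ term. To legitimately differentiate $u$, which a priori is only a distribution, I would run this through Nirenberg's difference-quotient method (or, equivalently, apply it to the mollifications $u_\varepsilon = u * \rho_\varepsilon$ and obtain the bound uniformly in $\varepsilon$). A heavier but conceptually cleaner alternative is to construct a properly supported pseudodifferential parametrix $Q$ of order $-2$ with $QL = I + R$, $R$ smoothing, and simply read off $u = Q(Lu) - Ru$.

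With the estimate in hand the argument bootstraps. Since $Lu = f \in C^\infty(\Omega_1)$, in particular $f \in H^s_{\mathrm{loc}}$ for every $s$; starting from $u \in H^{-N}_{\mathrm{loc}}$ and applying the estimate with a nested family of cutoffs that shrink slightly at each stage (harmless, since only regularity at $x_0$ is at stake), $u$ is upgraded from $H^{-N}_{\mathrm{loc}}$ to $H^{-N+1}_{\mathrm{loc}}$, then to $H^{-N+2}_{\mathrm{loc}}$, and so on; after finitely many steps $u \in H^0_{\mathrm{loc}} = L^2_{\mathrm{loc}}$ near $x_0$, and the iteration then continues without end, giving $u \in H^s_{\mathrm{loc}}$ near $x_0$ for all $s$. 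Sobolev embedding yields $u \in C^\infty$ near $x_0$, and since $x_0 \in \Omega_1$ was arbitrary, $u \in C^\infty(\Omega_1)$.

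The main obstacle is precisely the variable-coefficient interior estimate. For constant coefficients the whole thing is an exercise with the Fourier transform, but passing to merely smooth, variable coefficients forces one either into the difference-quotient machinery---where the difference operator must be carefully commuted past the cutoffs and past the coefficients of $L$, with all resulting commutators controlled---or into the construction of a pseudodifferential parametrix, which presupposes developing the symbol calculus. Ellipticity enters exactly at the step where one inverts the principal symbol $a^{ij}(x_0)\xi_i\xi_j$; without it the scheme collapses. Everything else---the localization, the finite-order property of distributions, the bootstrap, and the final embedding---is routine bookkeeping.
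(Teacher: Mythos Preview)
Your sketch is a correct outline of the standard proof of hypoellipticity for elliptic operators with $C^\infty$ coefficients: start from $u \in H^{-N}_{\mathrm{loc}}$, use the interior elliptic estimate (via freezing coefficients or a parametrix) to gain derivatives, bootstrap against $Lu \in C^\infty$, and finish with Sobolev embedding. There is no gap in the strategy.

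However, there is nothing to compare here: the paper does \emph{not} give its own proof of this theorem. It is stated in the Preliminaries section with the citation \cite[4.2.1]{Shimakura} and used as a black box (specifically, in the proof of Lemma~\ref{lemma:linearizedFredholm} to upgrade an element of the cokernel to $C^\infty$). So your write-up is supplying a proof where the paper simply quotes the literature; your Sobolev-bootstrap argument is precisely the kind of argument one finds in the cited reference or in any standard PDE text, and it is adequate for the purpose.
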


\begin{definition} \label{def:interiorHolderSpaces}
Let $\Omega \subset \mathbb{R}^n$ be open, $k$ be a non-negative integer and $\alpha \in (0,1)$. Set $d_x := \text{dist}(x, \partial \Omega)$ and $d_{x,y} := \min\{d_x,d_y\}$ and consider $u \in C^k(\Omega)$. We say that $u \in C^{k,\alpha}_*(\Omega)$ if its \emph{interior H\"older norm} is finite, namely
\[
|u|^*_{k,\alpha,\Omega} := |u|^*_{k,\Omega} + [u]^*_{k,\alpha,\Omega} < +\infty,
\]
where
\[
|u|^*_{k,\Omega} := \sum_{j=0}^k [u]^*_{j,\Omega} := \sum_{j=0}^k \sup_{\substack{x \in \Omega \\ |\beta| = j}} d^k_x |D^\beta u(x)|
\]
and
\[
[u]_{k,\alpha,\Omega} := \sup_{\substack{x,y \in \Omega \\ |\beta| = k}} d^{k+\alpha}_{x,y} \frac{|D^\beta u(x) - D^\beta u(y)|}{|x-y|^\alpha}.
\]
\end{definition}

One can prove that $C^{k,\alpha}_*(\Omega)$, equipped with the interior norm, is a Banach space (see for example \cite[Problem 5.2]{GilbargTrudinger}). In order to simplify our notation, from now on we will drop the subscript $*$ and just write $C^{k,\alpha}(\Omega) := C^{k,\alpha}_*(\Omega)$.

In order to state the Schauder's interior estimates we also need to introduce the following norms. Let $\sigma$ be a real number and define
\[
|u|^{(\sigma)}_{k,\alpha,\Omega} := |u|^{(\sigma)}_{k,\Omega} + [u]^{(\sigma)}_{k,\alpha,\Omega} < +\infty,
\]
where
\[
|u|^{(\sigma)}_{k,\Omega} := \sum_{j=0}^k [u]^{(\sigma)}_{j,\Omega} := \sum_{j=0}^k \sup_{\substack{x \in \Omega \\ |\beta| = j}} d^{j+\sigma}_x |D^\beta u(x)|
\]
and
\[
[u]^{(\sigma)}_{k,\alpha,\Omega} := \sup_{\substack{x,y \in \Omega \\ |\beta| = k}} d^{k+\alpha+\sigma}_{x,y} \frac{|D^\beta u(x) - D^\beta u(y)|}{|x-y|^\alpha}.
\]

One can check the following \cite[6.11]{GilbargTrudinger}.
\begin{proposition} \label{prop:CauchySchwarz}
Let $\sigma + \tau \geq 0$, then
\[
|fg|^{(\sigma+\tau)}_{0,\alpha,\Omega} \leq |f|^{(\sigma)}_{0,\alpha,\Omega} |g|^{(\tau)}_{0,\alpha,\Omega}.
\]
\end{proposition}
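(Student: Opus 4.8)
The plan is to prove the inequality seminorm by seminorm, tracking the weights carefully. Write out the norm on the left as
\[
|fg|^{(\sigma+\tau)}_{0,\alpha,\Omega} = \sup_{x\in\Omega} d_x^{\sigma+\tau}|f(x)g(x)| + \sup_{\substack{x,y\in\Omega}} d_{x,y}^{\alpha+\sigma+\tau}\frac{|f(x)g(x)-f(y)g(y)|}{|x-y|^\alpha}.
\]
For the zeroth-order part, at a point $x$ I would simply split $d_x^{\sigma+\tau}=d_x^\sigma d_x^\tau$ and estimate $d_x^{\sigma+\tau}|f(x)g(x)| = \bigl(d_x^\sigma|f(x)|\bigr)\bigl(d_x^\tau|g(x)|\bigr) \le |f|^{(\sigma)}_{0,\Omega}\,|g|^{(\tau)}_{0,\Omega}$, and taking the supremum over $x$ gives the bound by $|f|^{(\sigma)}_{0,\alpha,\Omega}|g|^{(\tau)}_{0,\alpha,\Omega}$ (since each seminorm is dominated by the full norm). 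The content is really in the Hölder seminorm of the product.

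For the Hölder part, the standard trick is to add and subtract: $f(x)g(x)-f(y)g(y) = \bigl(f(x)-f(y)\bigr)g(x) + f(y)\bigl(g(x)-g(y)\bigr)$. This splits the difference quotient into two terms. For the first term I would bound $|g(x)| \le |g(y)| + |g(x)-g(y)|$, or more cleanly, work directly with the weighted quantities: multiply and divide by appropriate powers of $d_{x,y}$, using $d_{x,y}\le d_x$ and $d_{x,y}\le d_y$ to pass between $d_{x,y}$, $d_x$, $d_y$ as needed. Concretely, for the term $\frac{|f(x)-f(y)|}{|x-y|^\alpha}|g(x)|$ one writes the weight $d_{x,y}^{\alpha+\sigma+\tau} = d_{x,y}^{\alpha+\sigma}\cdot d_{x,y}^{\tau} \le d_{x,y}^{\alpha+\sigma}\cdot d_x^{\tau}$ (valid since $\tau$ may be negative, in which case we instead need $d_{x,y}\le d_x$ to go the right way — here one must be a little careful about the sign of $\tau$ and use $d_{x,y}^{\tau}\le d_x^{\tau}$ when $\tau\ge 0$ and the reverse bound combined with a different splitting when $\tau<0$). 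Then $d_{x,y}^{\alpha+\sigma}\frac{|f(x)-f(y)|}{|x-y|^\alpha}\le [f]^{(\sigma)}_{0,\alpha,\Omega}$ and $d_x^{\tau}|g(x)|\le |g|^{(\tau)}_{0,\Omega}$, giving the product of the two full norms; symmetrically for the second term with the roles of $f$ and $g$ swapped. Summing the two contributions, and noting that the zeroth-order and Hölder pieces of each factor are both bounded by the respective full norm, yields $|fg|^{(\sigma+\tau)}_{0,\alpha,\Omega}\le |f|^{(\sigma)}_{0,\alpha,\Omega}|g|^{(\tau)}_{0,\alpha,\Omega}$.

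The main obstacle is purely bookkeeping: keeping the exponents straight and handling the cases where $\sigma$ or $\tau$ is negative, since then a bound like $d_{x,y}^\tau \le d_x^\tau$ reverses and one has to choose the splitting of the weight $d_{x,y}^{\alpha+\sigma+\tau} = d_{x,y}^{a}d_{x,y}^{b}$ so that each resulting exponent attaches to the factor whose norm it matches — using $d_{x,y}=\min\{d_x,d_y\}$ in whichever direction is favorable. The hypothesis $\sigma+\tau\ge 0$ is what guarantees the total weight $d_{x,y}^{\alpha+\sigma+\tau}$ is actually controlled (the weight does not blow up as $x,y$ approach $\partial\Omega$ in a way that the factors cannot absorb). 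Everything else reduces to the elementary product rule for differences and the definition of the weighted seminorms; this is why the paper only says ``one can check''.
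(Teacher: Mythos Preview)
Your proposal is correct and is precisely the standard argument; note that the paper does not actually give a proof of this proposition but simply refers to \cite[(6.11)]{GilbargTrudinger}, where the verification proceeds exactly as you outline (split the product difference, distribute the weight $d_{x,y}^{\alpha+\sigma+\tau}$, and use $d_{x,y}\le d_x,d_y$). Your identification of the role of the hypothesis $\sigma+\tau\ge 0$ is also right: it guarantees that at least one of $\sigma,\tau$ is nonnegative, so after possibly swapping the roles of $(f,\sigma)$ and $(g,\tau)$ and choosing the decomposition $f(x)g(x)-f(y)g(y)=f(x)(g(x)-g(y))+(f(x)-f(y))g(y)$ with $d_{x,y}=d_x$ (WLOG), the weight inequalities all go the right way.
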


The basic Schauder's interior estimates are provided by the following
\begin{theorem} \label{thm:SchauderInterior} \cite[6.2]{GilbargTrudinger}
Let $\Omega$ be an open subset of $\mathbb{R}^n$ and let $u \in C^{2,\alpha}(\Omega)$ be a bounded solution in $\Omega$ of
\[
Lu = a^{ij}(x) \partial_i \partial_j u + b^i(x) \partial_i u + c(x) u = f,
\]
where $f \in C^{0,\alpha}(\Omega)$ and there are positive constants $\lambda, \Lambda$ such that the coefficients satisfy
\[
a^{ij}(x) \xi_i \xi_j \geq \lambda |\xi|^2 \qquad \forall x \in \Omega, \xi \in \mathbb{R}^n
\]
and
\[
|a^{ij}|^{(0)}_{0,\alpha,\Omega}, |b^i|^{(1)}_{0,\alpha,\Omega}, |c|^{(2)}_{0,\alpha,\Omega} \leq \Lambda.
\]
Then there exists a constant $C>0$ not depending on $u$ and $f$ such that
\[
|u|^*_{2,\alpha,\Omega} \leq C\left(|u|_{0,\Omega} + |f|^{(2)}_{0, \alpha, \Omega}\right).
\]
\end{theorem}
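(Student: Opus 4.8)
The plan is to follow the classical potential-theoretic route of \cite[Chapter 6]{GilbargTrudinger}: first prove the estimate for the Laplacian on concentric balls, upgrade it to arbitrary constant-coefficient elliptic operators by a linear change of coordinates, pass to the variable-coefficient operator $L$ by freezing the coefficients at a point and absorbing the error, and finally globalize by a covering argument tailored to the boundary-distance weights appearing in $|\cdot|^*_{2,\alpha,\Omega}$ and $|\cdot|^{(2)}_{0,\alpha,\Omega}$.

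The analytic heart is the model estimate. If $w$ denotes the Newtonian potential of $f \in C^{0,\alpha}(B_{2R})$, then $\Delta w = f$ in $B_{2R}$, and differentiating the potential twice leads to a singular integral representation of $D^2 w$; splitting the domain of integration into the part near the two points at which one compares $D^2 w$ (where one exploits the $C^\alpha$-modulus of $f$) and the part far from them (where the kernel bound is integrated directly) yields
\[
R^2\,\sup_{B_R}|D^2 w| \;+\; R^{2+\alpha}\,[D^2 w]_{\alpha;B_R} \;\le\; C(n,\alpha)\left(R^2 \sup_{B_{2R}}|f| + R^{2+\alpha}\,[f]_{\alpha;B_{2R}}\right),
\]
where $[\,\cdot\,]_{\alpha;B}$ is the ordinary $C^\alpha$-seminorm on $B$. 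Since any $u$ with $\Delta u = f$ on $B_{2R}$ differs from $w$ by a harmonic function, and harmonic functions obey the interior derivative bounds $\sup_{B_R}|D^k h|\le C(n,k)R^{-k}\sup_{B_{2R}}|h|$, one obtains a scaled interior estimate which, rewritten in the weighted norms of Definition \ref{def:interiorHolderSpaces} on a ball $B$ and a slightly larger concentric ball $B'$, reads $|u|^*_{2,\alpha,B}\le C\big(|u|_{0,B'}+|f|^{(2)}_{0,\alpha,B'}\big)$. A linear change of variables diagonalizing $a^{ij}(x_0)$ (whose condition number is controlled by $\lambda$ and $\Lambda$) extends this to any constant-coefficient operator $a^{ij}_0\partial_i\partial_j$ with the same ellipticity and bound, at the cost of letting $C$ depend on $\lambda,\Lambda$.

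Now fix $x_0\in\Omega$, put $d=d_{x_0}$, and on $B_\rho(x_0)$ with $\rho\le d/2$ to be chosen write
\[
a^{ij}(x_0)\,\partial_i\partial_j u \;=\; f \;-\;\big(a^{ij}(x)-a^{ij}(x_0)\big)\partial_i\partial_j u \;-\; b^i\,\partial_i u \;-\; c\,u \;=:\; g .
\]
Applying the constant-coefficient estimate on $B_\rho(x_0)$, the term $\big(a^{ij}(x)-a^{ij}(x_0)\big)\partial_i\partial_j u$ contributes to the right-hand side a quantity bounded by $C\rho^\alpha\Lambda\,[D^2u]_{\alpha;B_\rho(x_0)}$, which for $\rho$ small (depending only on $n,\alpha,\lambda,\Lambda$) is absorbed into the left-hand side; this absorption is legitimate because $u\in C^{2,\alpha}(\Omega)$ makes all the seminorms involved finite. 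The lower-order terms $b^i\partial_i u$ and $cu$, together with the intermediate quantities produced along the way, are controlled by the interpolation inequalities for the interior Hölder norms (for every $\varepsilon>0$ one has $|u|^*_{1,\Omega}\le\varepsilon|u|^*_{2,\alpha,\Omega}+C_\varepsilon|u|_{0,\Omega}$, and similarly for the other intermediate norms), whose $\varepsilon$-part is absorbed on the left as well. The outcome is a local bound $|u|^*_{2,\alpha,B_\rho(x_0)}\le C\big(|u|_{0,B_{2\rho}(x_0)}+|f|^{(2)}_{0,\alpha,B_{2\rho}(x_0)}\big)$, with $C$ uniform in $x_0$.

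It remains to globalize. Choosing $\rho\sim d_{x_0}$ (say $\rho=d_{x_0}/8$), the weights $d_x$ and $d_{x,y}$ built into the two norms are, up to fixed multiplicative constants, comparable to $d_{x_0}$ throughout $B_{2\rho}(x_0)$, so the local estimate becomes a pointwise bound for $d_x^k|D^\beta u(x)|$ and for the weighted difference quotient of $D^2 u$ at any two points lying in a common ball $B_\rho(x)$; taking the supremum over $x_0\in\Omega$ recovers $|u|^*_{2,\alpha,\Omega}$ on the left. For pairs $x,y$ with $|x-y|\ge\tfrac18\min(d_x,d_y)$ the weighted Hölder quotient of $D^2u$ is bounded directly by a constant times $\sup|D^2u|$ against the weight, hence by the $|u|^*_{2,\Omega}$ part already estimated. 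Combining the two ranges of $|x-y|$ yields the asserted inequality. The main obstacle is the model estimate of the second paragraph — the quantitative $C^{2,\alpha}$ control of the Newtonian potential — together with the care needed so that the freezing-and-absorption step scales correctly against the boundary-distance weights; the rest is interpolation and bookkeeping.
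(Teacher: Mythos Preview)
The paper does not give its own proof of this statement: Theorem \ref{thm:SchauderInterior} is simply quoted from \cite[6.2]{GilbargTrudinger} as a known preliminary result, with no argument supplied. Your sketch is precisely the classical potential-theoretic proof carried out in Chapter~6 of Gilbarg--Trudinger (Newtonian potential estimates, frozen-coefficient perturbation, interpolation, and the distance-weighted covering), so there is nothing to compare --- you have reproduced the argument the citation points to.
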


The following is a generalization of Theorem \ref{thm:SchauderInterior} to the case in which we consider ``weighted'' H\"older norms. Let $w \in C^{k,\alpha}(\Omega), w > 0$ be the weight. Define, according to the notation explained before, the space
\[
C^{k,\alpha}_w(\Omega) := \left\{ u \in C^k(\Omega) \middle| |wu|^*_{k,\alpha,\Omega} < +\infty \right\}.
\]

\begin{theorem}\label{thm:weightedSchauder} \textsc{(Weighted Schauder's Estimates)}
Let $\Omega$ be an open subset of $\mathbb{R}^n$ and let $w \in C^{2,\alpha}(\Omega), w>0$ be a weight such that there exists a positive constant $K$ such that
\[
|\partial_j \log w|^{(1)}_{0,\alpha,\Omega}, \left|\frac{\partial_i \partial_j w}{w}\right|^{(2)}_{0,\alpha,\Omega} \leq K, \quad \forall i,j.
\]
Let $u \in C^{2,\alpha}_w(\Omega)$ be a solution in $\Omega$ of
\[
Lu = a^{ij}(x) \partial_i \partial_j u + b^i(x) \partial_i u + c(x) u = f,
\]
where $f \in C^{0,\alpha}_w(\Omega)$ and there are positive constants $\lambda, \Lambda$ such that the coefficients satisfy
\[
a^{ij}(x) \xi_i \xi_j \geq \lambda |\xi|^2 \qquad \forall x \in \Omega, \xi \in \mathbb{R}^n
\]
and
\[
|a^{ij}|^{(0)}_{0,\alpha,\Omega}, |b^i|^{(1)}_{0,\alpha,\Omega}, |c|^{(2)}_{0,\alpha,\Omega} \leq \Lambda.
\]
Assume moreover that $w u$ is bounded. Then there exists a constant $C>0$ not depending on $u$ and $f$ such that
\[
|u|^*_{2,\alpha,\Omega;w} \leq C\left(|u|_{0,\Omega;w} + |f|^{(2)}_{0, \alpha, \Omega;w}\right),
\]
where $|u|^*_{2,\alpha,\Omega;w} := |wu|^*_{2,\alpha,\Omega}$, $|u|_{0,\Omega;w} := |wu|_{0,\Omega}$ and $|f|^{(2)}_{0, \alpha, \Omega;w} := |wf|^{(2)}_{0, \alpha, \Omega}$.
\end{theorem}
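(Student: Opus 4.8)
The plan is to reduce the weighted estimate to the plain interior Schauder estimate of Theorem~\ref{thm:SchauderInterior} by the change of unknown $v := wu$. Since $w \in C^{2,\alpha}(\Omega)$ with $w>0$ and $u \in C^{2,\alpha}_w(\Omega)$ means precisely $wu \in C^{2,\alpha}(\Omega)$, the function $v$ is an honest element of $C^{2,\alpha}(\Omega)$, and it is bounded by hypothesis. The first step is to derive the equation satisfied by $v$. Writing $u = w^{-1}v$ and expanding, using $\partial_j w^{-1} = -w^{-1}\partial_j\log w$ and $\partial_i\partial_j w^{-1} = w^{-1}\bigl(2\,\partial_i\log w\,\partial_j\log w - (\partial_i\partial_j w)/w\bigr)$, one multiplies $Lu=f$ through by $w$ and obtains $\tilde L v = \tilde f$, where $\tilde f := wf$, the principal coefficients $a^{ij}$ are unchanged, and
\[
\tilde b^{\,i} := b^i - 2a^{ij}\partial_j\log w, \qquad
\tilde c := c - b^i\partial_i\log w + a^{ij}\Bigl(2\,\partial_i\log w\,\partial_j\log w - \frac{\partial_i\partial_j w}{w}\Bigr),
\]
where the symmetry $a^{ij}=a^{ji}$ has been used to combine the two first-order contributions coming from the second-order term.

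The second step is to verify that $\tilde L$ still satisfies the hypotheses of Theorem~\ref{thm:SchauderInterior} with constants controlled by $\lambda$, $\Lambda$ and $K$ only. Ellipticity is immediate since $a^{ij}$ is untouched. For the lower-order coefficients one invokes the product inequality of Proposition~\ref{prop:CauchySchwarz} together with the standing assumptions $|\partial_j\log w|^{(1)}_{0,\alpha,\Omega}\le K$ and $|(\partial_i\partial_j w)/w|^{(2)}_{0,\alpha,\Omega}\le K$. For instance $|a^{ij}\partial_j\log w|^{(1)}_{0,\alpha,\Omega}\le |a^{ij}|^{(0)}_{0,\alpha,\Omega}\,|\partial_j\log w|^{(1)}_{0,\alpha,\Omega}\le \Lambda K$, so $|\tilde b^{\,i}|^{(1)}_{0,\alpha,\Omega}\le\Lambda(1+2K)$; similarly each summand of $\tilde c$ is bounded by $\Lambda$, $\Lambda K$ or $\Lambda K^2$ (the quadratic term being handled by applying Proposition~\ref{prop:CauchySchwarz} twice), whence $|\tilde c|^{(2)}_{0,\alpha,\Omega}\le\Lambda(1+2K+2K^2)$. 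Since there are only finitely many components, all of them are bounded by a single constant $\Lambda'=\Lambda'(\Lambda,K)$, independent of $u$ and $f$.

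The third and final step is to apply Theorem~\ref{thm:SchauderInterior} to the bounded solution $v$ of $\tilde L v=\tilde f$, noting that $\tilde f = wf \in C^{0,\alpha}(\Omega)$ with $|\tilde f|^{(2)}_{0,\alpha,\Omega}=|f|^{(2)}_{0,\alpha,\Omega;w}$, that $|v|_{0,\Omega}=|u|_{0,\Omega;w}$, and that by definition $|v|^*_{2,\alpha,\Omega}=|u|^*_{2,\alpha,\Omega;w}$. This gives exactly
\[
|u|^*_{2,\alpha,\Omega;w}=|v|^*_{2,\alpha,\Omega}\le C\bigl(|v|_{0,\Omega}+|\tilde f|^{(2)}_{0,\alpha,\Omega}\bigr)=C\bigl(|u|_{0,\Omega;w}+|f|^{(2)}_{0,\alpha,\Omega;w}\bigr),
\]
with $C=C(n,\lambda,\Lambda,K)$ not depending on $u$ or $f$. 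I do not anticipate a genuine obstacle: the argument is a conjugation by the weight followed by bookkeeping. The only point demanding care — and the reason the two structural hypotheses on $w$ are posed in exactly those weighted norms — is checking that the transformed lower-order coefficients land in the correct weighted Hölder classes with constants depending only on $\Lambda$ and $K$; this is precisely what the product inequality of Proposition~\ref{prop:CauchySchwarz} is designed to furnish.
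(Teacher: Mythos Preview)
Your proposal is correct and follows essentially the same route as the paper's proof in Appendix~\ref{app:weightedSchauder}: set $v=wu$, derive the conjugated equation $\tilde Lv=wf$ with unchanged principal part and lower-order coefficients exactly as you wrote, then bound $|\tilde b^i|^{(1)}_{0,\alpha}$ and $|\tilde c|^{(2)}_{0,\alpha}$ via Proposition~\ref{prop:CauchySchwarz} and apply Theorem~\ref{thm:SchauderInterior}. The only cosmetic difference is that the paper first expands $\partial_i v$ and $\partial_i\partial_j v$ in terms of $u$ and then substitutes $u=v/w$, whereas you expand $u=w^{-1}v$ directly; the resulting coefficients and estimates coincide.
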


For the proof, see Appendix \ref{app:weightedSchauder}.

\section{Dimension 2}\label{sec:2D}
We want to find some non-trivial solutions of the 2-dimensional Liouville equation
\[
\Delta u(x,y) + \text{e}^{u(x,y)} = 0, \qquad \forall (x,y) \in \mathbb{R}^2,
\]
To use a bifurcation theorem, we first need to find a trivial solution, i.e.~a solution depending only on one variable, say $x$, and constant in the other. The equation then becomes the ordinary differential equation
\[
u''(x) + \text{e}^{u(x)} = 0, \qquad \forall x \in \mathbb{R}^2,
\]
which admits the family of solutions
\[
\log\left[c_1 - c_1\tanh^2\left(\frac{1}{2}\sqrt{2c_1\left(c_2+x\right)^2}\right)\right], \quad c_1 \geq 0, \quad c_2 \in \mathbb{R}.
\]
Observe that the two parameters account only for a translation and a dilation of the solution, so we can simply fix them. For our convenience, we choose $c_1 = 2$ and $c_2 = 0$ (namely, we are requiring that the solution is even and we are fixing its volume), getting
\[
u_0(x,y) := \log[2(1 - \tanh^2(|x|))] = \log(2\sech^2(x)).
\]
We now need to specify the setting we are working in. As we know that the trivial solution does not depend on the $y$ variable, we could just choose to restrict the problem to the subsets
\[
S_\lambda := \{(x,y) \in \mathbb{R}^2 \mid x \in \mathbb{R}, y \in (0,\lambda)\}, \quad \lambda > 0.
\]
In this way, we get a natural parameter for our problem: the witdth $\lambda$ of the strip $S_\lambda$. We now need to fix the Banach spaces: we ask that the solution goes to $-\infty$ as $|x|\rightarrow +\infty$. Consequently, the growth must be at most linear (the equation essentially says that at $\infty$ the second derivative must be 0). In particular, one might ask that the perturbation is bounded by a function that grows slower than $|x|$, like $\sqrt{|x|}$. Define then
\[
X_\lambda := \left\{ u \in C^{2,\alpha}(S_\lambda) \, \middle| \begin{aligned} &\, \frac{\partial}{\partial y} u(x,0) = \frac{\partial}{\partial y} u(x,\lambda) = 0 \quad \forall x \in \mathbb{R}, \\
    &u(-x,y)=u(x,y) \quad \forall (x,y) \in S_\lambda, \\
    &\left|\langle x \rangle^{-\frac{1}{2}} u\right|_{2,\alpha,S_\lambda} + \left|\langle x \rangle^{\frac{3}{2}} \Delta u\right|_{0,\alpha,S_\lambda} < +\infty \end{aligned} \right\}
\]
and
\[
Y_\lambda := \left\{u \in C^{0,\alpha}(S_\lambda) \middle| \begin{aligned} \,
    &u(-x,y)=u(x,y) \quad \forall (x,y) \in S_\lambda, \\
    &\left|\langle x \rangle^{\frac{3}{2}} f\right|_{0,\alpha,S_\lambda} < +\infty \end{aligned}
\right\},
\]
where $\langle x \rangle := \sqrt{1+x^2}$. Recalling that the interior H\"older spaces of Definition \ref{def:interiorHolderSpaces} are Banach spaces (\cite[Problem 5.2]{GilbargTrudinger}), it can be easily checked that both $X_\lambda$ and $Y_\lambda$ are Banach spaces when endowed, respectively, with the norms
\[
\norm{u}_{X_\lambda} := \left|\langle x \rangle^{-\frac{1}{2}} u\right|_{2,\alpha,S_\lambda} + \left|\langle x \rangle^{\frac{3}{2}} \Delta u\right|_{0,\alpha,S_\lambda}
\]
and
\[
\norm{f}_{Y_\lambda} := \left|\langle x \rangle^{\frac{3}{2}} f\right|_{0,\alpha,S_\lambda}.
\]
Observe moreover that the functions in $X_\lambda$ grow at most as $\sqrt{|x|}$, while those in $Y_\lambda$ grow at most as $|x|^{-\frac{3}{2}}$.

Our problem is then finding the zeros of the following function:
\[
\begin{split}
\tilde{F} : X_\lambda &\longrightarrow Y_\lambda \\
    u &\longmapsto \Delta (u_0 + u) + \text{e}^{u_0+u} = \Delta u + \text{e}^{u_0} (\text{e}^u - 1).
\end{split}
\]

\begin{remark}
  Once we have a non-trivial solution on the strip, we can obtain a non-trivial solution on the whole plane by extending the solution reflecting it repeatedly along the borders of the strip. Continuity is automatic by construction and Neumann conditions assure that this is still a weak solution of the problem. Bootstrapping and weighted elliptic regularity (Theorem \ref{thm:weightedSchauder}) make this also a strong solution.
\end{remark}

\begin{lemma}\label{lemma:candidateLambda}
  A necessary condition for $\lambda$ to be a bifurcation point is that $\lambda = \pi k$ for some $k \in \mathbb{N}_{>0}$. Moreover, in that case we have
  \[
  \ker L = \{t \sech(x) \cos(y) \mid t \in \mathbb{R}\}.
  \]
\end{lemma}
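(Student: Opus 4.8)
The plan is to compute the linearization $L := \tilde F_u(\lambda,0)$ explicitly and to reduce the kernel equation to a family of one‑dimensional eigenvalue problems by separating the periodic variable. Since $u_0 = \log(2\sech^2 x)$ gives $e^{u_0} = 2\sech^2 x$, differentiating $\tilde F(u) = \Delta u + e^{u_0}(e^u - 1)$ at $u=0$ yields
\[
Lv = \Delta v + 2\sech^2(x)\,v .
\]
Any $v \in \ker L$ is $C^\infty$ by hypoellipticity (Theorem~\ref{thm:smoothness}), so its Fourier cosine coefficients $v_k(x)$ in the $y$–variable are well defined and smooth in $x$; because the $y$–range $(0,\lambda)$ is bounded, each $v_k$ inherits from $v$ the growth bound $|v_k(x)| \le C_k \langle x\rangle^{1/2}$ (with the corresponding weighted $C^{2,\alpha}$ control), and each $v_k$ is even in $x$ since $v$ is. Multiplying the equation $\Delta v + 2\sech^2 x\, v = 0$ by $\cos(\pi k y/\lambda)$, integrating over $y\in(0,\lambda)$ and integrating by parts twice in $y$ — the boundary terms vanishing by the Neumann conditions built into $X_\lambda$ — gives, with $\mu_k := \pi k/\lambda$, the ODE $v_k'' + (2\sech^2 x - \mu_k^2)\,v_k = 0$. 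By completeness of the cosine system, $v \equiv 0$ iff every $v_k \equiv 0$, and $v \mapsto (v_k)_k$ is injective; so everything reduces to deciding, for each $k \ge 0$, whether this ODE has a nonzero even solution growing at most like $\sqrt{|x|}$.

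Next I would carry out the mode‑by‑mode analysis, the point being that $-\partial_x^2 - 2\sech^2 x$ is the Pöschl–Teller operator with exactly one bound state, at energy $-1$, with positive eigenfunction $\sech x$. For $k=0$ the energy is $0$: the two independent solutions are the explicit functions $\tanh x$ and $x\tanh x - 1$; the first is odd, hence excluded by symmetry, and the only even solution $x\tanh x - 1$ grows linearly, so $\langle x\rangle^{-1/2}(x\tanh x - 1)$ is unbounded and not in $X_\lambda$. For $k \ge 1$ with $\mu_k \neq 1$ the energy $-\mu_k^2$ is negative but different from $-1$, and near $\pm\infty$ the two solutions behave like $e^{\pm\mu_k x}$; a solution of sub‑$\sqrt{|x|}$ growth would have to be the exponentially decaying one at both ends, hence $L^2$, hence an eigenfunction at energy $-\mu_k^2 \neq -1$ — impossible — so $v_k \equiv 0$. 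Finally, for $k\ge 1$ with $\mu_k = 1$, i.e.\ $\lambda = \pi k$, the energy is exactly $-1$: the solution decaying at both ends is $\sech x$ (even, exponentially decaying, hence in $X_\lambda$), while the independent solution grows like $e^{x}$; thus $v_k \in \mathrm{span}\{\sech x\}$. This spectral picture follows either from the classical theory of the Pöschl–Teller potential or from the substitution $t = \tanh x$, which turns the equation into the associated Legendre equation with $\ell = 1$.

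Assembling these facts proves both assertions. If $\lambda \notin \pi\mathbb{N}_{>0}$ then $\mu_k \neq 1$ for all $k \ge 1$, so all $v_k$ vanish and $\ker L = \{0\}$; combined with the Fredholm index‑$0$ property of $L$ (Lemma~\ref{lemma:linearizedFredholm}) this makes $L$ invertible, and Proposition~\ref{prop:bifNecessaryCondition} then forbids $\lambda$ from being a bifurcation point, which is the stated necessary condition. If $\lambda = \pi k$ for some $k \in \mathbb{N}_{>0}$, then $\mu_j = j/k$ equals $1$ only for $j = k$, so a kernel element reduces to $v = v_k(x)\cos(\pi k y/\lambda)$ with $v_k$ a multiple of $\sech x$; since $\pi k/\lambda = 1$ this yields $\ker L = \{\,t\,\sech(x)\cos(y) \mid t\in\mathbb{R}\,\}$, and one verifies directly — using $\sech'' x = \sech x - 2\sech^3 x$ — that $\sech(x)\cos(y)$ satisfies the Neumann and symmetry conditions, has finite $X_\lambda$–norm, and lies in $\ker L$.

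The step I expect to be the genuine obstacle is the first one: making the separation of variables rigorous in the weighted Hölder setting. One must justify differentiating under the integral sign (legitimate once $v$ is smooth and the $y$–interval bounded), check that the weighted growth constraint defining $X_\lambda$ really does transfer to each mode $v_k$ — so that only sub‑$\sqrt{|x|}$ solutions of the mode ODE are admissible — and make sure no kernel element is lost in passing to and from the Fourier decomposition. By comparison, the mode‑by‑mode ODE analysis is essentially classical, resting on the explicit Pöschl–Teller solutions together with elementary asymptotics of second‑order linear ODEs, and the final assembly is bookkeeping.
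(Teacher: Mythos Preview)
Your argument is correct and follows the same overall architecture as the paper --- linearize, separate the $y$-variable, reduce to the P\"oschl--Teller eigenvalue problem --- but the mode-by-mode analysis differs in a meaningful way. The paper (Appendix~\ref{app:2Dcomputations}) passes to the Legendre equation via $t=\tanh x$ and then carries out a fairly lengthy case analysis of the asymptotics of $P_1^\mu$ and $Q_1^\mu$ near $\pm1$ to rule out admissible solutions unless $\mu=1$; this is self-contained but heavy. You instead invoke two cleaner facts: (i) since $2\sech^2 x$ decays exponentially, any solution of $v'' + (2\sech^2 x - \mu_k^2)v = 0$ with sub-$\sqrt{|x|}$ growth must be the exponentially decaying one at both ends, hence $L^2$; and (ii) the P\"oschl--Teller operator $-\partial_x^2 - 2\sech^2 x$ has exactly one bound state, at energy $-1$, with eigenfunction $\sech x$. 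This spectral shortcut is shorter and more conceptual, at the cost of treating the P\"oschl--Teller spectrum as a black box rather than deriving it. You are also more careful than the paper about the reduction itself: you pass to Fourier cosine coefficients and explain why the growth and parity constraints transfer, whereas the paper simply posits a product ansatz. (Incidentally, your $k=0$ even solution $x\tanh x - 1$ is exactly the paper's $-\tfrac12\tanh(x)\log\frac{1-\tanh x}{1+\tanh x}-1$, since $\log\frac{1-\tanh x}{1+\tanh x}=-2x$.)
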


\begin{proof}
Recall that by Proposition \ref{prop:bifNecessaryCondition} a necessary condition to have a bifurcation on $S_\lambda$ is that the linearized operator in $u_0$ on the strip $S_\lambda$ has a non-trivial kernel.

The linearization of $\tilde{F}(u) = \Delta u + \text{e}^{u_0} (\text{e}^u - 1)$ in the point 0 is
\[
L(v)(x,y) = \Delta v(x,y) + 2 \sech^2(x) v(x,y).
\]
We might first want to look for solutions of $Lv=0$ having the form
\[
v(x,y) = w_1(x) w_2(y),
\]
hence satisfying
\[
w_1''(x) w_2(y) + w_1(x) w_2''(y) + 2 \sech^2(x) w_1(x) w_2(y) = 0.
\]
By separation of variables we get $w_2(y) = A \cos(\mu y) + B \sin(\mu y)$. Imposing Neumann boundary conditions we get $B=0$ and $\mu = \frac{\pi j}{\lambda}, \, j \in \mathbb{Z}$. Hence, we can directly look for solutions of the form
\[
v_j(x,y) = \cos\left(\frac{\pi j}{\lambda}y\right) \tilde{v}_j(x),
\]
where $j \in \mathbb{N}$ is fixed, which means that we need to solve the problem:
\[
-\tilde{v}_j''(x) - 2\sech^2(x) \tilde{v}_j(x) = -\left(\frac{\pi j}{\lambda}\right)^2 \tilde{v}_j(x), \quad \forall	x \in \mathbb{R}, \, \forall j \in \mathbb{N}.
\]
Observe that the last equation is a stationary Schr\"odinger equation, with a so-called \textit{P\"oschl-Teller potential} \cite{PoschlTeller}. Some lenghty but not difficult considerations on the growth of Legendre functions lead to
\[
\tilde{v}_j \not\equiv 0 \iff -\left(\frac{\pi j}{\lambda}\right)^2 = -1 \iff j = \frac{\lambda}{\pi}
\]
(see Appendix \ref{app:2Dcomputations}). We then get that the only nonzero element of the family is
\[
v_{\frac{\lambda}{\pi}}(x,y) = A \sech(x) \cos(y).
\]
Summing up, we found that the only values of $\lambda$ for which we can expect to have a bifurcation are the points $\pi j$, with $j \in \mathbb{N}_{>0}$. For these values of $\lambda$, in particular, the operator $L$ has one dimensional kernel:
\[
\ker L = \{t \sech(x) \cos(y) \mid t \in \mathbb{R}\}.
\]
\end{proof}

Lemma \ref{lemma:candidateLambda} restricts the set of candidate bifurcation points to just $\lambda = \pi k$ for $k \in \mathbb{N}_{>0}$. In the following we will prove that all of them are actually bifurcation points (and that the bifurcations are all supercritical).

\begin{lemma}\label{lemma:linearizedFredholm}
  The linearized operator $L$ is Freholm of index 0.
\end{lemma}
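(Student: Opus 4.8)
The plan is to show that the bounded operator $L\colon X_\lambda\to Y_\lambda$, $Lv=\Delta v+2\sech^2(x)\,v$, has closed range of codimension exactly one; boundedness is immediate, since $2\sech^2 x$ decays exponentially and hence carries $O(\langle x\rangle^{1/2})$ functions into $O(\langle x\rangle^{-3/2})$ functions, while $\Delta$ is accounted for in both norms. Since Lemma~\ref{lemma:candidateLambda} already gives $\ker L=\{t\sech(x)\cos(y)\mid t\in\mathbb{R}\}$ when $\lambda=\pi k$, the real content is (i) closedness of the range and (ii) the fact that its codimension is $1$. The obstruction to invoking the ordinary Schauder theory of Theorem~\ref{thm:SchauderInterior} directly is that $S_\lambda$ is unbounded in the $x$-direction; this is exactly what the weighted estimate of Theorem~\ref{thm:weightedSchauder} is designed to handle, applied with the weight $w=\langle x\rangle^{-1/2}$ (the one built into $X_\lambda$), whose hypotheses on $\partial_j\log w$ and $(\partial_i\partial_j w)/w$ are straightforward to verify.

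For (i) I would establish a semi-Fredholm a priori estimate of the form
\[
\norm{v}_{X_\lambda}\le C\left(\norm{Lv}_{Y_\lambda}+\norm{v}_{C^{2,\alpha}(\{|x|\le R\}\cap S_\lambda)}\right),\qquad v\in X_\lambda,
\]
for a suitably large $R$. On the bounded box $\{|x|\le R\}\cap S_\lambda$ this is just Theorem~\ref{thm:weightedSchauder} (the weight being bounded there). On the complementary region $\{|x|>R\}$ the zeroth-order term $2\sech^2 x$ is exponentially small, so $L$ is a small perturbation of $\Delta$ and it suffices to prove the analogous bound for $\Delta$ there. Expanding in the Neumann cosine modes $\cos(\pi j y/\lambda)$, $j\ge0$, reduces that to the family of ODEs $v_j''-(\pi j/\lambda)^2 v_j=g_j$ on $\{|x|>R\}$, and the point is that the weight exponents $-\tfrac12$ (growth of $v$) and $\tfrac32$ (decay of $Lv$) are chosen so that for every $j\ge1$ this ODE has a unique solution of the prescribed growth, with a bound on $v_j$ in terms of $g_j$ uniform in $j$ (the homogeneous solutions $\text{e}^{\pm(\pi j/\lambda)|x|}$ are exponential, so the $O(|x|^{-3/2})$ datum forces the variation-of-parameters solution), while for $j=0$ the sublinear-growth requirement together with the $O(|x|^{-3/2})$ decay of $g_0$ plays the analogous role. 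Feeding this back, and using that a bounded-region norm is a relatively compact term on $X_\lambda$ (Arzel\`a--Ascoli, after an interior Schauder step), the displayed estimate upgrades to the standard semi-Fredholm estimate, so $L$ has closed range and finite-dimensional kernel---which by Lemma~\ref{lemma:candidateLambda} is exactly $1$-dimensional.

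For (ii) I would exploit that $L$ is formally self-adjoint. Integrating by parts over $S_\lambda$, using the Neumann conditions at $y=0,\lambda$ and the exponential decay of $\sech^2 x$, one checks that whenever $\phi$ is a classical solution of $L\phi=0$ decaying fast enough that the boundary terms vanish and that $g\mapsto\int_{S_\lambda}\phi\,g$ is a bounded functional on $Y_\lambda$, this functional annihilates $\operatorname{im}L$. The same P\"oschl--Teller ODE analysis as in Lemma~\ref{lemma:candidateLambda}, carried out mode by mode, shows that the only homogeneous solution with that much decay is a multiple of $\sech(x)\cos(y)$: in each mode $j\ge1$ with $j\ne k$ both homogeneous solutions grow exponentially, in the resonant mode $j=k$ the second solution grows exponentially while $\sech x$ decays, and in the mode $j=0$ the nonconstant even homogeneous solution grows \emph{linearly}---already too fast to pair against $Y_\lambda$---while the odd solution $\tanh x$ is excluded by the evenness constraint. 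Hence $\codim\overline{\operatorname{im}L}=1$, the nontrivial functional being $g\mapsto\int_{S_\lambda}\sech(x)\cos(y)\,g$, and together with $\dim\ker L=1$ this shows $L$ is Fredholm of index $0$. (Alternatively, run the homotopy $L_t=\Delta+2t\sech^2 x$, $t\in[0,1]$: each $L_t$ satisfies the estimate of step~(i) with constants uniform in $t$, so $\operatorname{ind}L=\operatorname{ind}\Delta$, and $\operatorname{ind}(\Delta\colon X_\lambda\to Y_\lambda)=0$ follows directly since only the mode $j=0$ contributes, with the constants as kernel and the single condition $\int g_0=0$ as cokernel.)

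The main obstacle is the far-field analysis of step~(i): one must verify that the chosen weight exponents make \emph{no} mode $j\ge1$ resonant and that all the mode-wise bounds are uniform in $j$, and one must take some care in transferring the Fourier-in-$y$ decomposition---transparent in $L^2$---to the weighted H\"older setting, or else carry out the whole far-field argument directly with the weighted Schauder estimates and avoid the decomposition. The bookkeeping with the critical exponents $-\tfrac12,\tfrac32$ (and with the evenness constraint, which is what kills the would-be extra kernel and cokernel in the mode $j=0$) is the delicate part; once it is in place, closedness of the range, finiteness of the kernel, and the identification of the $1$-dimensional cokernel are routine.
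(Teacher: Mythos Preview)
Your proposal is correct and follows the standard template for Fredholmness on noncompact domains: a semi-Fredholm a priori estimate (far-field parametrix plus compact remainder) for closed range, then duality or homotopy for the index. The paper, however, takes a shorter and less systematic route, and it is worth seeing the difference.

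The paper does \emph{not} set up a semi-Fredholm estimate at all. Instead it uses the embedding $Y_\lambda\subset L^2(S_\lambda)$ to identify $\codim L$ with the dimension of $\{f\in Y_\lambda:\langle Lu,f\rangle_{L^2}=0\text{ for all }u\in X_\lambda\}$. Any such $f$ is smooth by hypoellipticity, and the paper then applies the weighted Schauder estimate (Theorem~\ref{thm:weightedSchauder}) with weight $\langle x\rangle^{3/2}$ directly to the homogeneous equation $Lf=0$: this gives $|\langle x\rangle^{3/2}f|_{2,\alpha}\le C|\langle x\rangle^{3/2}f|_{0,\alpha}<\infty$, so all derivatives of $f$ up to second order decay like $|x|^{-3/2}$. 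That decay is precisely what is needed to justify the integrations by parts showing $\langle Lu,f\rangle=\langle u,Lf\rangle$, whence $Lf=0$ weakly; the P\"oschl--Teller mode analysis of Lemma~\ref{lemma:candidateLambda} then pins down the annihilator as $\langle u^*\rangle$. So both arguments end at the same ODE analysis, but the paper reaches it via one invocation of weighted Schauder on the cokernel side rather than via a far-field parametrix on the domain side.

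What your approach buys is a bona fide proof of closed range (which the paper essentially takes for granted in its identification of the cokernel) and the clean homotopy alternative $L_t=\Delta+2t\sech^2 x$. What the paper's approach buys is brevity: it never needs the mode-wise uniformity in $j$ you flag as the delicate point, and it never has to transport the Fourier-in-$y$ decomposition into the weighted H\"older setting, because the weighted Schauder theorem (applied once, with the $Y_\lambda$ weight rather than the $X_\lambda$ weight you had in mind) does all the asymptotic control in a single stroke.
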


\begin{proof}
Observe first that $Y_\lambda \subset L^2(S_\lambda)$, so that
\[
\codim L = \dim \{ f \in Y_\lambda \mid \langle Lu, f \rangle_{L^2} = 0 \quad \forall u \in X_\lambda\}.
\]
Indeed, if $[f] \in \coker L$, then $[f]$ has a representative which is perpendicular to $R(L)$ (just take its projection on $R(L)^\perp$), while if $f$ is in the set on the right hand side then $f$ cannot be parallel to any element of $R(L)$ (otherwise the scalar product with such an element would not be zero) and thus cannot be in $R(L)$, which is a vector subspace of $Y_\lambda$ because $L$ is linear.

Let then $f \in Y_\lambda$ be such that $\langle L u , f \rangle_{L^2} = 0$ for each $u \in X_\lambda$. By Theorem \ref{thm:smoothness}, we have that $f$ is $C^\infty$ on $S_\lambda$.

We now only need to prove that $L$ is symmetric on the elements $f$ in $Y_\lambda$ such that $\langle L u , f \rangle_{L^2} = 0$ for each $u \in X_\lambda$. Take $u \in X_\lambda$ and $f \in Y_\lambda \cap C^2(S_\lambda) \subset L^2(S_\lambda)$ such that $Lf=0$. Then
\begin{equation} \label{eq:explicitSelfAdjointess}
\begin{split}
\langle Lu, &f \rangle_{L^2} = \int_{S_\lambda} \left( \Delta u + \text{e}^{u_0} u \right) f \\
    &= \int_{(0,\lambda) \times \mathbb{R}} \left( \pd[2]{u}{x}(x,y) + \pd[2]{u}{y}(x,y) + 2 \sech^2(x) u(x,y) \right) f(x,y) \dif x \dif y.
\end{split}
\end{equation}
The last summand inside the integral is clearly symmetric. Thus, we can just look at the first two. Theorem \ref{thm:weightedSchauder} applied to $Lf=0$ gives
\begin{equation} \label{eq:Schauder}
\left|\langle x \rangle^{\frac{3}{2}} f\right|_{2,\alpha,S_\lambda} \leq C' \left|\langle x \rangle^{\frac{3}{2}} f\right|_{0,\alpha,S_\lambda} \leq C
\end{equation}
for some constant $C > 0$ (recall that $f \in Y_\lambda$).  Hence, both the first order and the second order partial derivatives in $x$ decay as $|x|^{-\frac{3}{2}}$ as $|x| \rightarrow \infty$. Indeed
\[
\pd{}{x} \left( \langle x \rangle^\frac{3}{2} f(x,y) \right) = \frac{3}{2} x \langle x \rangle^{-\frac{1}{2}} f(x,y) + \langle x \rangle^\frac{3}{2} \pd{f}{x}(x,y)
\]
and consequently, making use of \eqref{eq:Schauder},
\begin{equation} \label{eq:firstDerivativeDecay}
\begin{split}
\sup_{(x,y) \in S_\lambda} \left| \langle x \rangle^\frac{3}{2} \pd{f}{x}(x,y) \right| &\leq C + \sup_{(x,y) \in S_\lambda} \left| \frac{3}{2} x \langle x \rangle^{-\frac{1}{2}} f(x,y) \right| \\
    &\leq C + \frac{3}{2} \sup_{(x,y) \in S_\lambda} \left| \langle x \rangle^\frac{3}{2} f(x,y) \right| \leq C_1
\end{split}
\end{equation}
for some $C_1 > 0$ (which can be computed explicitly). Analogously, the second derivative turns out to be
\[
\begin{split}
\pd[2]{}{x} &\left( \langle x \rangle^\frac{3}{2} f(x,y) \right) \\
    &= \frac{3}{4}(x^2 + 2) \langle x \rangle^{-\frac{5}{2}}f(x,y) + 3x \langle x \rangle^{-\frac{1}{2}} \pd{f}{x}(x,y) + \langle x \rangle^\frac{3}{2} \pd[2]{f}{x}(x,y).
\end{split}
\]
With estimates similar to the above and using \eqref{eq:firstDerivativeDecay} we then find
\begin{equation} \label{eq:secondDerivativeDecay}
\sup_{(x,y) \in S_\lambda} \left| \langle x \rangle^\frac{3}{2} \pd[2]{f}{x}(x,y) \right| \leq C_2
\end{equation}
for some $C_2 > 0$ (which, again, can be computed explicitly).

A first consequence of \eqref{eq:firstDerivativeDecay} and \eqref{eq:secondDerivativeDecay}, together with the H\"older inequality, is that in what follows we can always apply Fubini's Theorem. Let us look then at the first summand in \eqref{eq:explicitSelfAdjointess}.
\[
\begin{split}
&\int_{S_\lambda} \frac{\partial^2 u}{\partial x^2}(x,y) f(x,y) \dif x \dif y \\
&= \int_0^\lambda \int_{-\infty}^{+\infty} \left[ \frac{\partial}{\partial x} \left( \frac{\partial u}{\partial x}(x,y) f(x,y) \right) - \frac{\partial u}{\partial x}(x,y) \frac{\partial f}{\partial x}(x,y) \right] \dif x \dif y \\
&= \int_0^\lambda \left[\frac{\partial u}{\partial x}(x,y) f(x,y)\right]_{x=-\infty}^{+\infty} \dif x - \int_{S_\lambda} \frac{\partial u}{\partial x}(x,y) \frac{\partial f}{\partial x}(x,y) \dif x \dif y \\
&= - \int_0^\lambda \left[ u(x,y) \frac{\partial f}{\partial x}(x,y) \right]_{x=-\infty}^{+\infty} \dif x + \int_{S_\lambda} u(x,y) \frac{\partial^2 f}{\partial x^2}(x,y) \dif x \dif y \\
&= \int_{S_\lambda} u(x,y) \frac{\partial^2 f}{\partial x^2}(x,y) \dif x \dif y
\end{split},
\]
where, in order to pass from the 3-rd to the 4-th to the 5-th line, we used the fact that both $\frac{\partial u}{\partial x} f$ and $u \frac{\partial f}{\partial x}$ decay as $|x|^{-2}$ as $|x| \rightarrow \infty$ (the estimate for $\frac{\partial u}{\partial x} f$ is obtained in the same way as in \eqref{eq:firstDerivativeDecay}).

As for the second summand, the computations are similar but easier, as we can directly exploit the fact that we are imposing Neumann conditions on the boundary of $S_\lambda$ (which, recall, is $\mathbb{R} \times (0,\lambda)$). Hence, again the boundary terms go away while integrating by parts and so, putting all together, we obtain that $L$ is symmetric.

By the same argument of the Lemma \ref{lemma:candidateLambda}, we get then that, if $\lambda \neq \pi k$ for some $k \in \mathbb{N}_{>0}$, then $\coker L$ is trivial. Instead, if $\lambda = \pi k$ for some $k \in \mathbb{N}_{>0}$, then there exists only one family of solutions of $\langle L u , f \rangle = 0, \forall u \in X_\lambda$ satisfying Neumann conditions on $\partial S_\lambda$ and belonging to $Y_\lambda$, namely $\langle u^* \rangle \subset X_\lambda \cap Y_\lambda$. Consequently, $\codim L = 1$. In both cases, $L$ is Freholm of index 0, as wanted.
\end{proof}

\begin{proof}[Proof of Theorem \ref{thm:2Dnon-trivialSolution}]
We just have to apply Theorem \ref{thm:1Dbifurcation}. The change of variables that changes $y$ into $\lambda y$ and leaves $x$ unchanged transforms our original problem into one to which we can apply Theorem \ref{thm:1Dbifurcation}:
\[
 F(\lambda, u) = \pd[2]{u}{x} + \frac{1}{\lambda^2} \pd[2]{u}{y} + \text{e}^{u_0}\left(\text{e}^u - 1\right),
\]
where $F: (0,+\infty) \times X_1 \rightarrow Y_1$.

Take then $\lambda^* = \pi j$, with $j \in \mathbb{N}_{>0}$. First,
\[
 F_u(\lambda^*, 0)[v] = \pd[2]{v}{x} + \frac{1}{(\lambda^*)^2}\pd[2]{v}{y} + \text{e}^{u_0} v
\]
is Fredholm, because so it was before the change of variables. Secondly,
\[
 M[v] := F_{u, \lambda}(\lambda^*, 0)[1,v] = -\frac{1}{(\lambda^*)^3} \pd[2]{v}{y}
\]
and, as we know that
\[
 v_0(x,y) = \sech(x) \cos(y) \in \ker L,
\]
we have that
\[
 \ker F_u(\lambda^*, 0) = \langle u^* \rangle
\]
with
\[
 u^*(x,y) := R_{\lambda^*}(v_0)(x,y) = \sech(x) \cos(\lambda^* y) \in X_1.
\]
Hence
\[
 M[u^*] = \frac{2}{\lambda^*} u^*(x,y) \in \langle u^* \rangle,
\]
so that $M[u^*] \not\in R$.

According to Theorem \ref{thm:1Dbifurcation}, then, $\lambda^* = \pi j$ for $j \in \mathbb{N}_{>0}$ are bifurcation points for $F$ and, consequently, also for $\tilde{F}$ (which is nothing less than $F$ written using different coordinates).

Finally, Proposition \ref{prop:bifShape} gives that all bifurcations are supercritical. This means that the period of the non-trivial solutions we find in this way is actually slightly bigger than $\pi k$ (for $\lambda = \pi k$ the trivial branch and the non-trivial branch meet, so that they identify the same trivial solution). Notice though that we can actually make the period be exactly $\pi k$ by replacing the non-trivial solution $u$ of period $(1 + \varepsilon) \pi k$ with
\[
u \left(\frac{x}{1 + \varepsilon}, \frac{y}{1 + \varepsilon} \right) + 2\log\frac{1}{1+\varepsilon},
\]
which is still a solution of $\Delta u + \text{e}^u = 0$, of period $\pi k$.
\end{proof}

\section{Dimension 4}\label{sec:4D}
In this section, we prove the same result of the previous section in the 4D case. The main tool we use is Krasnosel'skii's Theorem \ref{thm:krasnoselskii}.

\subsection{Trivial solutions} \label{subsec:trivial}
Our first goal is to show that there exists at least one solution of
\begin{equation} \label{eq:3Dtrivial}
\begin{cases}
\Delta^2 u = \text{e}^u \text{ in } \mathbb{R}^3 \\
\int_{\mathbb{R}^3} \text{e}^{u(x)} \dif x < +\infty
\end{cases}.
\end{equation}
The proof will be done in two steps and we will look in particular for solutions of the integral form of \eqref{eq:3Dtrivial}, namely solutions of
\[
u(x) = -\frac{1}{8\pi} \int_{\mathbb{R}^3} |x-y| \text{e}^{u(y)} \dif y.
\]
Observe that a function $u$ with finite volume satisfying this last expression is a solution of \eqref{eq:3Dtrivial}. Indeed, a fundamental solution of $\Delta^2$ is $G(x) = -\frac{1}{8\pi} |x|$ (see \cite{Cohl2013}).

\begin{lemma}
Let
\[
X := \left\{ u \in C^0(\mathbb{R}^3) \,\middle|\, u \text{ is radially symmetric and } \norm{u} < +\infty \right\},
\]
where $\norm{u} := \sup_{x \in \mathbb{R}^3} \frac{|u(x)|}{1+|x|}$. Then for every $\varepsilon > 0$ there exist $u_\varepsilon \in X$ such that
\begin{equation} \label{eq:regularizedIntegral3D}
u_\varepsilon(x) = -\frac{1}{8\pi} \int_{\mathbb{R}^3} |x-y| \, \text{\emph{e}}^{-\varepsilon |y|^2} \text{\emph{e}}^{u_\varepsilon(y)} \dif y.
\end{equation}
\end{lemma}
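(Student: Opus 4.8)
The plan is to obtain $u_\varepsilon$ as a fixed point of the integral operator
\[
T(u)(x) := -\frac{1}{8\pi}\int_{\mathbb{R}^3}|x-y|\,\text{e}^{-\varepsilon|y|^2}\text{e}^{u(y)}\dif y,
\]
restricted to the closed convex set
\[
C := \left\{ u \in X \,\middle|\, u \le 0 \ \text{and}\ \norm{u}\le R_0 \right\},\qquad R_0 := \frac{1}{8\pi}\int_{\mathbb{R}^3}(1+|y|)\,\text{e}^{-\varepsilon|y|^2}\dif y,
\]
and then invoking Schauder's fixed point theorem. Two features make this work: the Gaussian weight $\text{e}^{-\varepsilon|y|^2}$ guarantees convergence of every integral below no matter how fast $u$ grows, and restricting to $u\le 0$ turns the otherwise uncontrolled factor $\text{e}^{u(y)}$ into the harmless bound $\text{e}^{u(y)}\le 1$.

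First I would check that $T$ is well defined on $C$, maps $C$ into itself, and is continuous there. For $u\in C$ the integrand is dominated by $(|x|+|y|)\,\text{e}^{-\varepsilon|y|^2}\in L^1(\mathbb{R}^3)$, so $T(u)(x)$ is finite; $T(u)$ is radial because both $|x-y|$ and the density $\text{e}^{-\varepsilon|y|^2}\text{e}^{u(y)}$ are, and continuous by dominated convergence. Using $\text{e}^{u}\le 1$ and $|x|+|y|\le(1+|x|)(1+|y|)$,
\[
\frac{|T(u)(x)|}{1+|x|}\le\frac{1}{8\pi}\int_{\mathbb{R}^3}(1+|y|)\,\text{e}^{-\varepsilon|y|^2}\dif y=R_0,
\]
and $T(u)\le 0$, so $T(u)\in C$; that $C$ is nonempty ($0\in C$), convex, bounded and closed in the Banach space $X$ is immediate, since $\norm{\cdot}$-convergence forces pointwise convergence and hence preserves non-positivity. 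Continuity of $T$ on $C$ follows from $|\text{e}^{a}-\text{e}^{b}|\le|a-b|$ for $a,b\le 0$ together with $|u-v|\le\norm{u-v}(1+|\cdot|)$, which give $\norm{T(u)-T(v)}\le\frac{1}{8\pi}\bigl(\int_{\mathbb{R}^3}(1+|y|)^2\,\text{e}^{-\varepsilon|y|^2}\dif y\bigr)\norm{u-v}$; the Lipschitz constant need not be below $1$ (it is large for small $\varepsilon$), which is exactly why Schauder rather than Banach is the right tool here.

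The heart of the matter --- and the step I expect to be the main obstacle --- is relative compactness of $T(C)$ in $X$. Since $X$ carries the weighted sup norm $\norm{u}=\sup_x|u(x)|/(1+|x|)$ over the \emph{non-compact} space $\mathbb{R}^3$, Arzelà--Ascoli alone does not suffice: one also needs uniform control at infinity. The plan is to split off the leading linear term. With $m_u:=\int_{\mathbb{R}^3}\text{e}^{-\varepsilon|y|^2}\text{e}^{u(y)}\dif y\in[0,8\pi R_0]$ and
\[
\psi_u(x):=T(u)(x)+\frac{m_u}{8\pi}|x|=-\frac{1}{8\pi}\int_{\mathbb{R}^3}\bigl(|x-y|-|x|\bigr)\text{e}^{-\varepsilon|y|^2}\text{e}^{u(y)}\dif y,
\]
I would use the elementary inequality $\bigl||x-y|-|x|+\tfrac{x}{|x|}\cdot y\bigr|\le 2|y|^2/|x|$ and the identity $\int_{\mathbb{R}^3}\tfrac{x}{|x|}\cdot y\,\text{e}^{-\varepsilon|y|^2}\text{e}^{u(y)}\dif y=0$ (the density is radial, so its odd part about the hyperplane $x^\perp$ integrates to zero) to conclude that $|\psi_u(x)|\le D_\varepsilon/(1+|x|)$ with $D_\varepsilon$ independent of $u\in C$. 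The functions $T(u)$ are moreover uniformly Lipschitz on $\mathbb{R}^3$, since $\bigl||x_1-y|-|x_2-y|\bigr|\le|x_1-x_2|$ and $\text{e}^{u}\le1$, hence so are the $\psi_u$; thus $\{\psi_u\}_{u\in C}$ is equibounded and equicontinuous. A diagonal Arzelà--Ascoli argument over balls yields local uniform convergence along a subsequence, and the uniform decay $D_\varepsilon/(1+|x|)$ upgrades it to convergence in $\norm{\cdot}$; since moreover $m_u$ ranges over a compact interval, every sequence in $T(C)$ has a $\norm{\cdot}$-convergent subsequence.

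With $T:C\to C$ continuous, $C$ a nonempty closed bounded convex subset of the Banach space $X$, and $\overline{T(C)}$ compact, Schauder's theorem then produces $u_\varepsilon\in C\subseteq X$ with $T(u_\varepsilon)=u_\varepsilon$, i.e.\ \eqref{eq:regularizedIntegral3D}. The only genuinely delicate point is the relative compactness of $T(C)$: the difficulty lies entirely in the non-compactness of $\mathbb{R}^3$ against the weight $1/(1+|x|)$, and the radial cancellation $\int_{\mathbb{R}^3}\tfrac{x}{|x|}\cdot y\,\text{e}^{-\varepsilon|y|^2}\text{e}^{u(y)}\,\dif y=0$ is what rescues the argument by forcing the remainder $\psi_u$ to decay at a rate uniform in $u$.
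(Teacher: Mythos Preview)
Your argument is correct, and it reaches the same fixed point by a somewhat different path than the paper. The paper defines $T_\varepsilon$ on all of $X$, proves it is compact there (by showing that $\{T_\varepsilon(u_n)/(1+|x|)\}$ is equi-Lipschitz via the elementary bound $\bigl|\tfrac{|x-z|}{1+|x|}-\tfrac{|y-z|}{1+|y|}\bigr|\le(2+|z|)|x-y|$ and then invoking Arzel\`a--Ascoli), and concludes with \emph{Schaefer}'s fixed point theorem: the required a~priori bound on $\{u=tT_\varepsilon(u):0\le t\le1\}$ comes for free from the observation that any such $u$ satisfies $u\le0$, hence $e^u\le1$.

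You instead build the sign constraint $u\le0$ into the setup by working on the closed convex set $C$, which makes every estimate elementary ($e^u\le1$ throughout), and you apply \emph{Schauder} directly. Your compactness argument is also different and in one respect more careful: by splitting off the linear term $-\tfrac{m_u}{8\pi}|x|$ and using the radial cancellation $\int \tfrac{x}{|x|}\cdot y\,e^{-\varepsilon|y|^2}e^{u(y)}\,\dif y=0$ to force uniform decay of the remainder $\psi_u$, you explicitly supply the ``tightness at infinity'' that Arzel\`a--Ascoli on the non-compact domain $\mathbb{R}^3$ needs. The paper's route is shorter but leaves this point implicit. Either approach works; yours trades a small amount of extra computation (the decomposition and the $O(|y|^2/|x|)$ estimate) for a more transparent treatment of the tail.
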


\begin{proof}
First of all, observe that $X$, endowed with the norm $\norm{\cdot}$, is a well-defined Banach space. Define then
\[
\begin{split}
T_\varepsilon:\, X &\longrightarrow X \\
    u &\longmapsto \bar{u}_\varepsilon, \quad \bar{u}_\varepsilon(x) := -\frac{1}{8\pi} \int_{\mathbb{R}^3} |x-y| \text{e}^{-\varepsilon |y|^2} \text{e}^{u(y)} \dif y
\end{split} \quad .
\]
$T_\varepsilon$ is well defined. Take in fact $u \in X$, then $\bar{u}_\varepsilon \in C^0(\mathbb{R}^3)$ by the Lebesgue Dominated Convergence Theorem. Moreover, $\bar{u}_\varepsilon$ is clearly radial because of the radial invariance of the Lebesgue integral: indeed, if $A \in SO(3)$, then
\[
\begin{split}
\bar{u}_\varepsilon(Ax) &= -\frac{1}{8\pi} \int_{\mathbb{R}^3} |Ax - y| \, \text{e}^{-\varepsilon |y|^2} \text{e}^{u(y)} \dif y \\
    &= -\frac{1}{8\pi} \int_{\mathbb{R}^3} |A(x - y)| \, \text{e}^{-\varepsilon |Ay|^2} \text{e}^{u(Ay)} |\det A| \dif y \\
    &= -\frac{1}{8\pi} \int_{\mathbb{R}^3} |x - y| \, \text{e}^{-\varepsilon |y|^2} \text{e}^{u(y)} \dif y = \bar{u}_\varepsilon(x).
\end{split}
\]
Finally,
\[
\begin{split}
|\bar{u}_\varepsilon(x)| &= \frac{1}{8\pi} \int_{\mathbb{R}^3} |x-y| \, \text{e}^{-\varepsilon |y|^2} \text{e}^{u(y)} \dif y \\
    &\leq \frac{1}{8\pi} \int_{\mathbb{R}^3} |x-y| \, \text{e}^{-\varepsilon |y|^2} \text{e}^{\,\norm{u}(1+|y|)} \dif y \\
    &\leq \frac{1}{8\pi} \int_{\mathbb{R}^3} |y| \text{e}^{-\varepsilon |y|^2} \text{e}^{\,\norm{u}(1+|y|)} \dif y + |x| \frac{1}{8\pi} \int_{\mathbb{R}^3} \text{e}^{-\varepsilon |y|^2} \text{e}^{\,\norm{u}(1+|y|)} \dif y \\
    &\leq C_1 + C_2 |x| \leq \bar{C}(1+|x|),
\end{split}
\]
so that $\norm{\bar{u}_\varepsilon} < +\infty$. Hence $T_\varepsilon(u) = \bar{u}_\varepsilon \in X$.

We now show that $T_\varepsilon$ is compact. Take a bounded sequence $\{u_n\}_n \subset X$, $\norm{u_n} \leq C < +\infty$ for all $n \in \mathbb{N}$. We want to show that then $\{T_\varepsilon(u_n)\}_n$ admits a converging subsequence. The idea is to use Arzelà-Ascoli's Theorem on the sequence $\left\{ \frac{T_\varepsilon(u_n)}{1+|x|} \right\}_n$. First,
\[
\begin{split}
\frac{|T_\varepsilon(u_n)|}{1+|x|} &= \frac{1}{8\pi (1+|x|)} \int_{\mathbb{R}^3} |x-y| \, \text{e}^{-\varepsilon |y|^2} \text{e}^{u_n(y)} \dif y \\
    &\leq \frac{1}{8\pi (1+|x|)} \int_{\mathbb{R}^3} |x-y| \, \text{e}^{-\varepsilon |y|^2} \text{e}^{C(1+|y|)} \dif y \\
    &\leq \frac{C_1 + C_2 |x|}{8\pi (1+|x|)} \leq \bar{C} < +\infty,
\end{split}
\]
for any $x \in \mathbb{R}^3$ and $n \in \mathbb{N}$, so that $\left\{ \frac{T_\varepsilon(u_n)}{1+|x|} \right\}_n$ is uniformly bounded. Moreover,
\[
\begin{split}
\left| \frac{T_\varepsilon(u_n(x))}{1+|x|} - \frac{T_\varepsilon(u_n(y))}{1+|y|} \right| &= \frac{1}{8\pi} \left| \int_{\mathbb{R}^3} \left( \frac{|x-z|}{1+|x|} - \frac{|y-z|}{1+|y|} \right) \text{e}^{-\varepsilon |z|^2} \text{e}^{u_n(z)} \dif z \right| \\
    &\leq \frac{1}{8\pi} \int_{\mathbb{R}^3} \left| \frac{|x-z|}{1+|x|} - \frac{|y-z|}{1+|y|} \right| \text{e}^{-\varepsilon |z|^2} \text{e}^{u_n(z)} \dif z \\
    &\leq \left( \frac{1}{8\pi} \int_{\mathbb{R}^3} (2+|z|) \text{e}^{-\varepsilon |z|^2} \text{e}^{C(1+|z|)} \dif z \right) |x-y|
\end{split}
\]
for any $x,y \in \mathbb{R}^3$ and $n \in \mathbb{N}$, so that $\left\{\frac{T_\varepsilon(u_n(x))}{1+|x|}\right\}_n$ is equicontinuous. The last inequality, in particular follows from the triangular inequality: as one clearly has
\[
\begin{split}
|y| &\leq |x| + |y-x|, \\
|x-z| &\leq |x-y| + |y-z|, \\
|z| &\leq |x| + |z-x|, \\
|x-z| &\leq |x| + |z|,
\end{split}
\]
we obtain indeed
\[
\begin{split}
\left| \frac{|x-z|}{1+|x|} - \frac{|y-z|}{1+|y|}\right| &= \left| \frac{|x-z| - |y-z| + |y||x-z| - |x||y-z|}{(1+|x|)(1+|y|)} \right| \\
    &\leq \frac{|x-y| + (|x|+|y-x|)|x-y| - |x||y-z|}{(1+|x|)(1+|y|)} \\
    &\leq \frac{|x-y| + |x||x-y| + |x-z||x-y|}{(1+|x|)(1+|y|)} \\
    &\leq \left(\frac{1+2|x|}{(1+|x|)(1+|y|)} + \frac{|z|}{(1+|x|)(1+|y|)} \right) |x-y| \\
    &\leq \left(\frac{2}{(1+|y|)} + \frac{|z|}{(1+|x|)(1+|y|)} \right) |x-y| \\
    &\leq (2 + |z|) |x-y|.
\end{split}
\]
By Arzelà-Ascoli's Theorem, $\left\{\frac{T_\varepsilon(u_n(x))}{1+|x|}\right\}_n$ admits a subsequence which converges uniformly. Thus, $\{ T_\varepsilon(u_n) \}_n$ admits a converging subsequence in $(X, \norm{\cdot})$ and therefore $T$ is a compact operator.

Next, we prove that $T$ has a fixed point using Schaefer's Fixed Point Theorem (see for example \cite{Zeidler1986}). Let $u \in X$ satisfy $u = tT_\varepsilon(u)$ for some $0 \leq t \leq 1$, then
\[
u(x) = -\frac{t}{8\pi} \int_{\mathbb{R}^3} |x-y| \text{e}^{-\varepsilon |y|^2} \text{e}^{u(y)} \dif y \leq 0.
\]
Consequently
\[
|u(x)| \leq \frac{t}{8\pi} \int_{\mathbb{R}^3} |x-y| \text{e}^{-\varepsilon |y|^2} \dif y \leq C(1+|x|)
\]
and therefore $\norm{u} \leq C$. That means that the set $\{u \in X \mid u = tT_\varepsilon(u), 0 \leq t \leq 1 \}$ is bounded in $(X, \norm{\cdot})$: by Schaefer's Theorem then $T_\varepsilon$ has a fixed point in $X$.
\end{proof}

\begin{theorem}\label{thm:trivialSolution}
$u_\varepsilon$ converges to some $u$ in $(X, \norm{\cdot})$ as $\varepsilon$ goes to 0, with $u$ satisfying
\[
u(x) = -\frac{1}{8\pi} \int_{\mathbb{R}^3} |x-y| \, \text{\emph{e}}^{u(y)} \dif y
\]
(hence being a solution of \eqref{eq:3Dtrivial} with the desired properties).
\end{theorem}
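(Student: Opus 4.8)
The plan is to establish uniform bounds on $u_\varepsilon$ (independent of $\varepsilon$), extract a limit via a compactness argument, and then pass to the limit in the integral equation \eqref{eq:regularizedIntegral3D}, justifying the passage by dominated convergence once the uniform bounds are in hand. The key observation is that, exactly as in the proof of the previous lemma, the fixed point $u_\varepsilon$ satisfies $u_\varepsilon(x) \leq 0$ for all $x$, since the integrand in \eqref{eq:regularizedIntegral3D} is nonnegative. This sign information is what makes everything work: because $u_\varepsilon \leq 0$, we have $\text{e}^{u_\varepsilon(y)} \leq 1$, and therefore
\[
|u_\varepsilon(x)| \leq \frac{1}{8\pi} \int_{\mathbb{R}^3} |x-y| \, \text{e}^{-\varepsilon |y|^2} \dif y,
\]
which is uniformly bounded by $C(1+|x|)$ for a constant $C$ \emph{independent of $\varepsilon$} (in fact, decreasing $\varepsilon$ only increases this integral, so it suffices to bound it for $\varepsilon$ in a neighborhood of $0$ — note $\int_{\mathbb{R}^3}|x-y|\text{e}^{-\varepsilon|y|^2}\dif y$ is finite for every $\varepsilon>0$ and grows like $C_1+C_2|x|$). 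Hence $\norm{u_\varepsilon} \leq C$ uniformly in $\varepsilon$.

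First I would run the Arzelà–Ascoli argument once more, now on the family $\left\{\frac{u_\varepsilon}{1+|x|}\right\}_{\varepsilon}$, to extract a sequence $\varepsilon_n \to 0$ along which $u_{\varepsilon_n} \to u$ in $(X,\norm{\cdot})$. The uniform boundedness of this family is immediate from the uniform bound $\norm{u_\varepsilon}\leq C$ just established. For equicontinuity, I would repeat essentially verbatim the triangle-inequality estimate from the previous proof: writing the difference $\frac{T_\varepsilon(u_\varepsilon)(x)}{1+|x|} - \frac{T_\varepsilon(u_\varepsilon)(y)}{1+|y|}$ as an integral of $\left(\frac{|x-z|}{1+|x|} - \frac{|y-z|}{1+|y|}\right)$ against $\text{e}^{-\varepsilon|z|^2}\text{e}^{u_\varepsilon(z)}$, bounding the kernel difference by $(2+|z|)|x-y|$, and using $\text{e}^{u_\varepsilon(z)} \leq 1$ to get an $\varepsilon$-uniform modulus of continuity (the Gaussian factor $\text{e}^{-\varepsilon|z|^2}$ only helps, so one may even replace it by $1$ on compact $z$-regions and control the tail uniformly for small $\varepsilon$). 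Arzelà–Ascoli then yields uniform convergence of $\frac{u_{\varepsilon_n}}{1+|x|}$ on compacta, and combined with the uniform decay of the tails this upgrades to convergence in $\norm{\cdot}$; call the limit $u \in X$, and note $u \leq 0$.

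Next I would pass to the limit in \eqref{eq:regularizedIntegral3D}. Fix $x$; I claim $\int_{\mathbb{R}^3}|x-y|\,\text{e}^{-\varepsilon_n|y|^2}\text{e}^{u_{\varepsilon_n}(y)}\dif y \to \int_{\mathbb{R}^3}|x-y|\,\text{e}^{u(y)}\dif y$. Pointwise, $\text{e}^{-\varepsilon_n|y|^2}\text{e}^{u_{\varepsilon_n}(y)} \to \text{e}^{u(y)}$ for each $y$ (from $\varepsilon_n\to0$ and $u_{\varepsilon_n}(y)\to u(y)$). For a dominating function I need an $\varepsilon$-independent, integrable (against $|x-y|$) majorant of $\text{e}^{u_{\varepsilon_n}(y)}$ — and here $\text{e}^{u_{\varepsilon_n}(y)}\leq 1$ is \emph{not} integrable against $|x-y|$, so a little care is needed. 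The fix is to exploit the linear lower bound implied by the integral equation itself: since $u_{\varepsilon_n}(y) = -\frac{1}{8\pi}\int|y-z|\text{e}^{-\varepsilon_n|z|^2}\text{e}^{u_{\varepsilon_n}(z)}\dif z$ and the integrand there is $\geq \text{e}^{-\varepsilon_n|z|^2}\cdot\mathbf{1}_{|z|\leq 1}\cdot\text{e}^{u_{\varepsilon_n}(z)}$, one can show $u_{\varepsilon_n}(y) \leq -c(1+|y|)$ for $|y|$ large, with $c>0$ \emph{independent of $\varepsilon_n$} once $\varepsilon_n$ is small — because the mass $\int_{|z|\leq 1}\text{e}^{-\varepsilon_n|z|^2}\text{e}^{u_{\varepsilon_n}(z)}\dif z$ is bounded below uniformly (the integrand is bounded below by $\text{e}^{-\varepsilon_n}\text{e}^{-C(1+|z|)}\geq\text{e}^{-1}\text{e}^{-2C}$ on $|z|\leq1$). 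Thus $\text{e}^{u_{\varepsilon_n}(y)}\leq \text{e}^{-c(1+|y|)}$ outside a fixed ball, which \emph{is} integrable against $|x-y|$; combining with the trivial bound $\text{e}^{u_{\varepsilon_n}}\leq1$ inside the ball gives a legitimate $\varepsilon$-independent dominating function, and dominated convergence applies. Passing to the limit in both sides of \eqref{eq:regularizedIntegral3D} then gives
\[
u(x) = -\frac{1}{8\pi}\int_{\mathbb{R}^3}|x-y|\,\text{e}^{u(y)}\dif y,
\]
and by the same lower bound $\text{e}^{u}\in L^1(\mathbb{R}^3)$, so $u$ solves \eqref{eq:3Dtrivial} with finite volume; finally, since the full family $\{u_\varepsilon\}$ is precompact and every subsequential limit satisfies the same (monotone-in-$\varepsilon$) integral equation, uniqueness of such a limit — or simply monotonicity of $u_\varepsilon$ in $\varepsilon$, which one can check from a comparison argument on $T_\varepsilon$ — gives convergence of the whole family, not just a subsequence. \textbf{The main obstacle} I anticipate is precisely the dominated-convergence step: the naive bound $\text{e}^{u_\varepsilon}\leq 1$ is too weak to be integrable against the kernel $|x-y|$, so one must bootstrap the linear \emph{decay} of $u_\varepsilon$ from the integral equation in an $\varepsilon$-uniform way before the limit can be taken; everything else is a routine repetition of the compactness machinery from the preceding lemma.
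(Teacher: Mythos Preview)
There is a genuine gap at the very first step. You claim that $u_\varepsilon\le 0$ and hence
\[
|u_\varepsilon(x)|\le \frac{1}{8\pi}\int_{\mathbb{R}^3}|x-y|\,\text{e}^{-\varepsilon|y|^2}\dif y\le C(1+|x|)
\]
with $C$ \emph{independent of $\varepsilon$}. But this integral blows up as $\varepsilon\to 0$: already at $x=0$,
\[
\int_{\mathbb{R}^3}|y|\,\text{e}^{-\varepsilon|y|^2}\dif y
=4\pi\int_0^\infty r^3\text{e}^{-\varepsilon r^2}\dif r
=\frac{2\pi}{\varepsilon^{2}}\longrightarrow\infty.
\]
Your own parenthetical remark (``decreasing $\varepsilon$ only increases this integral, so it suffices to bound it for $\varepsilon$ in a neighborhood of $0$'') is self-defeating for exactly this reason. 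So the uniform bound $\norm{u_\varepsilon}\le C$ is not established, and everything downstream --- the Arzel\`a--Ascoli compactness, the equicontinuity estimate $\int(2+|z|)\text{e}^{-\varepsilon|z|^2}\dif z$, and the lower bound $e^{u_{\varepsilon}(z)}\ge e^{-C(1+|z|)}$ on the unit ball --- inherits the same $\varepsilon$-dependence and collapses. A priori nothing prevents $u_\varepsilon(0)\to-\infty$ as $\varepsilon\to 0$, with the mass $\int e^{-\varepsilon|y|^2+u_\varepsilon(y)}\dif y$ degenerating.

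This is precisely the difficulty the paper works to overcome, and it does \emph{not} come for free. The paper first proves a Pohozaev-type identity (Lemma~\ref{lemma:Pohozaev}),
\[
\int_{\mathbb{R}^3}\bigl(u_\varepsilon(x)+6-4\varepsilon|x|^2\bigr)\,\text{e}^{-\varepsilon|x|^2+u_\varepsilon(x)}\dif x=0,
\]
and separately shows that $u_\varepsilon$ is radially monotone decreasing. These two facts force $u_\varepsilon(0)>-6$, uniformly in $\varepsilon$; from $|u_\varepsilon(0)|\le 6$ one then controls $\Delta u_\varepsilon(0)$, proves $\Delta u_\varepsilon$ is monotone increasing to get $\norm{\Delta u_\varepsilon}_{L^\infty}\le C$, and only \emph{then} obtains $\varepsilon$-uniform control and local $C^4$ convergence. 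Your bootstrapped linear decay $u_\varepsilon(y)\le -c(1+|y|)$ and the dominated-convergence step are in the right spirit (and close to what the paper does at the end), but they cannot be reached without first ruling out $u_\varepsilon(0)\to-\infty$, which requires an additional idea such as the Pohozaev identity.
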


Before proving the Theorem, we need to establish the following Pohozaev-type identity \cite{Pohozaev1965}.

\begin{lemma}\label{lemma:Pohozaev}
  Let $u_\varepsilon$ be a solution of \eqref{eq:regularizedIntegral3D}, namely
  \[
  u_\varepsilon(x) = -\frac{1}{8\pi} \int_{\mathbb{R}^3} |x-y| \, \text{\emph{e}}^{-\varepsilon |y|^2} \text{\emph{e}}^{u_\varepsilon(y)} \dif y.
  \]
  Then
  \[
  \int_{\mathbb{R}^3} \left( u_\varepsilon(x) + 6 - 4\varepsilon|x|^2 \right) \text{e}^{-\varepsilon |x|^2} \text{e}^{u_\varepsilon(x)} \dif x = 0.
  \]
\end{lemma}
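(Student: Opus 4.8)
The plan is to work directly with the integral representation of $u_\varepsilon$ rather than with the equation $\Delta^2 u_\varepsilon = \text{e}^{-\varepsilon|x|^2}\text{e}^{u_\varepsilon}$: since the fundamental solution $G(x) = -\frac{1}{8\pi}|x|$ is positively homogeneous of degree one, the usual Pohozaev manipulation can be performed globally, with no surface terms at infinity to estimate. Write $f_\varepsilon(x) := \text{e}^{-\varepsilon|x|^2}\text{e}^{u_\varepsilon(x)}$, so that $u_\varepsilon = G * f_\varepsilon$ by \eqref{eq:regularizedIntegral3D}. Since $u_\varepsilon \in X$ gives $u_\varepsilon(y) \le \norm{u_\varepsilon}(1+|y|)$, the weight $f_\varepsilon$ decays faster than any negative power of $|y|$; hence $\int_{\mathbb{R}^3}(1+|y|)^k f_\varepsilon(y)\dif y < \infty$ for every $k$, so all the integrals below converge absolutely, Fubini's theorem applies to the double integrals, and $u_\varepsilon = G * f_\varepsilon$ may be differentiated under the integral sign (the kernel $\nabla G(x-y) = -\frac{1}{8\pi}\frac{x-y}{|x-y|}$ being bounded and locally integrable).

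The main step is to prove the identity
\[
\int_{\mathbb{R}^3}\big(x\cdot\nabla u_\varepsilon(x)\big)\,f_\varepsilon(x)\dif x = \frac{1}{2}\int_{\mathbb{R}^3}u_\varepsilon(x)\,f_\varepsilon(x)\dif x.
\]
Differentiating the representation formula and using the Euler relation $z\cdot\nabla|z| = |z|$, together with $x\cdot(x-y) = |x-y|^2 + (x-y)\cdot y$, gives
\[
x\cdot\nabla u_\varepsilon(x) = -\frac{1}{8\pi}\int_{\mathbb{R}^3}\frac{x\cdot(x-y)}{|x-y|}f_\varepsilon(y)\dif y = u_\varepsilon(x) - \frac{1}{8\pi}\int_{\mathbb{R}^3}\frac{(x-y)\cdot y}{|x-y|}f_\varepsilon(y)\dif y.
\]
Multiplying by $f_\varepsilon(x)$ and integrating, I would symmetrize the resulting double integral by exchanging $x$ and $y$: since $(x-y)\cdot y + (y-x)\cdot x = -|x-y|^2$ and $\iint_{\mathbb{R}^3\times\mathbb{R}^3}|x-y|f_\varepsilon(x)f_\varepsilon(y)\dif x\dif y = -8\pi\int_{\mathbb{R}^3}u_\varepsilon f_\varepsilon$, the off-diagonal term contributes exactly $-\frac{1}{2}\int u_\varepsilon f_\varepsilon$, which yields the identity.

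It then remains to compute the left-hand side directly. Using $f_\varepsilon = \text{e}^{-\varepsilon|x|^2}\text{e}^{u_\varepsilon}$, writing $(x\cdot\nabla u_\varepsilon)\,\text{e}^{u_\varepsilon} = x\cdot\nabla\big(\text{e}^{u_\varepsilon}\big)$, and integrating by parts (the boundary term at infinity vanishes since $|x|\,\text{e}^{-\varepsilon|x|^2}\text{e}^{u_\varepsilon}\to 0$ super-exponentially),
\[
\int_{\mathbb{R}^3}\big(x\cdot\nabla u_\varepsilon\big)f_\varepsilon\dif x = -\int_{\mathbb{R}^3}\text{e}^{u_\varepsilon}\,\nabla\cdot\!\big(x\,\text{e}^{-\varepsilon|x|^2}\big)\dif x = -\int_{\mathbb{R}^3}\big(3-2\varepsilon|x|^2\big)f_\varepsilon\dif x,
\]
because $\nabla\cdot x = 3$ and $x\cdot\nabla\big(\text{e}^{-\varepsilon|x|^2}\big) = -2\varepsilon|x|^2\text{e}^{-\varepsilon|x|^2}$. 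Comparing the two expressions for $\int(x\cdot\nabla u_\varepsilon)f_\varepsilon$ gives $\frac{1}{2}\int u_\varepsilon f_\varepsilon = -\int(3-2\varepsilon|x|^2)f_\varepsilon$, which is exactly the asserted identity $\int_{\mathbb{R}^3}(u_\varepsilon + 6 - 4\varepsilon|x|^2)\,\text{e}^{-\varepsilon|x|^2}\text{e}^{u_\varepsilon}\dif x = 0$.

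The only genuinely technical points are the two analytic manipulations — differentiation under the integral in the first step, the integration by parts in the second — and both are routine consequences of the super-exponential decay of $f_\varepsilon$ and the boundedness of the kernels. The classical alternative, multiplying $\Delta^2 u_\varepsilon = \text{e}^{-\varepsilon|x|^2}\text{e}^{u_\varepsilon}$ by $x\cdot\nabla u_\varepsilon$, integrating over $B_R$ and letting $R\to\infty$, works too; there the difficulty is the reverse, namely that each surface term on $\partial B_R$ is only $O(R)$, so one must exhibit the exact cancellation among them and with the divergent interior term $\tfrac12\int_{B_R}(\Delta u_\varepsilon)^2$, using the far-field expansion $u_\varepsilon(x) = -\tfrac{1}{8\pi}\big(\int_{\mathbb{R}^3}f_\varepsilon\big)|x| + O(|x|^{-1})$ — the vanishing of the $O(1)$ term being forced by radial symmetry.
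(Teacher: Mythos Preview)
Your proof is correct and follows essentially the same route as the paper: differentiate the integral representation, compute $\int (x\cdot\nabla u_\varepsilon)f_\varepsilon$ two ways (once by integrating by parts, once by symmetrizing the double integral), and compare. The only cosmetic difference is that the paper performs the symmetrization via the decomposition $x=\tfrac{1}{2}(x+y)+\tfrac{1}{2}(x-y)$ (so that the term $\tfrac{(x+y)\cdot(x-y)}{|x-y|}$ is visibly skew-symmetric), whereas you use $x\cdot(x-y)=|x-y|^2+(x-y)\cdot y$ and then swap variables---these are the same manipulation in disguise.
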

\begin{proof}[Proof of Lemma \ref{lemma:Pohozaev}]
This proof follows the ideas of \cite{Hyder2019}.

Differentiating \eqref{eq:regularizedIntegral3D} one gets
\[
x \cdot \nabla u_\varepsilon(x) = -\frac{1}{8\pi} \int_{\mathbb{R}^3} \frac{x \cdot (x-y)}{|x-y|} \text{e}^{-\varepsilon|y|^2 + u_\varepsilon(y)} \dif y.
\]
Multiplying both sides by $\text{e}^{-\varepsilon|x|^2 + u_\varepsilon(x)}$ and integrating on $x$ we obtain
\begin{equation}\label{eq:pohozaevIntegratedDifferential}
  \begin{split}
    \int_{\mathbb{R}^3} &\left(x \cdot \nabla u_\varepsilon(x)\right) \text{e}^{-\varepsilon|x|^2 + u_\varepsilon(x)} \dif x \\
    &= -\frac{1}{8\pi} \int_{\mathbb{R}^3} \int_{\mathbb{R}^3} \frac{x \cdot (x-y)}{|x-y|} \text{e}^{-\varepsilon|y|^2+u_\varepsilon(y)} \text{e}^{-\varepsilon|x|^2+u_\varepsilon(x)} \dif y \dif x.
  \end{split}
\end{equation}
Integrating by parts the LHS one gets
\begin{equation}\label{eq:pohozaevLHS}
  \begin{split}
    \text{(LHS)} &= \int_{\mathbb{R}^3} \sum_{j=1}^3 x_j \pd{u_\varepsilon(x)}{x_j} \text{e}^{-\varepsilon|x|^2 + u_\varepsilon(x)} \dif x \\
    &= \sum_{j=1}^3 \int_{\mathbb{R}^3} x_j \text{e}^{-\varepsilon|x|^2} \pd{}{x_j}\left(\text{e}^{u_\varepsilon(x)}\right) \dif x \\
    &= - \sum_{j=1}^3 \int_{\mathbb{R}^3} \pd{}{x_j} \left(x_j \text{e}^{-\varepsilon|x|^2}\right) \text{e}^{u_\varepsilon(x)} \dif x \\
    &= -\sum_{j=1}^3 \int_{\mathbb{R}^3} \left( \text{e}^{-\varepsilon|x|^2} - 2\varepsilon x_j^2 \text{e}^{-\varepsilon|x|^2} \right) \text{e}^{u_\varepsilon(x)} \dif x \\
    &= -3 \int_{\mathbb{R}^3} \text{e}^{-\varepsilon|x|^2 + u_\varepsilon(x)} \dif x + \int_{\mathbb{R}^3} 2|x|^2 \varepsilon \text{e}^{-\varepsilon|x|^2 + u_\varepsilon(x)} \dif x.
  \end{split}
\end{equation}

For the RHS, instead, notice that $x = \frac{(x+y)+(x-y)}{2}$. Then
\begin{equation}\label{eq:pohozaevRHS}
  \begin{split}
    \text{(RHS)} &= \frac{1}{2} \int_{\mathbb{R}^3} \text{e}^{-\varepsilon|x|^2 + u_\varepsilon(x)} \left( -\frac{1}{8\pi} \int_{\mathbb{R}^3} |x-y| \text{e}^{-\varepsilon|y|^2 + u_\varepsilon(y)} \dif y \right) \dif x \\
    &\qquad - \frac{1}{16\pi} \int_{\mathbb{R}^3} \int_{\mathbb{R}^3} \frac{(x+y) \cdot (x-y)}{|x-y|} \text{e}^{-\varepsilon|x|^2 + u_\varepsilon(x) - \varepsilon|y|^2 + u_\varepsilon(y)} \dif y \dif x \\
    &= \frac{1}{2} \int_{\mathbb{R}^3} u_\varepsilon(x) \text{e}^{-\varepsilon|x|^+u_\varepsilon(x)} \dif x,
  \end{split}
\end{equation}
where the second term in the sum vanishes because the integrand is skew-symmetric in $(x,y)$.

Plugging \eqref{eq:pohozaevLHS} and \eqref{eq:pohozaevRHS} in \eqref{eq:pohozaevIntegratedDifferential} one finally gets
\[
\int_{\mathbb{R}^3} \left( u_\varepsilon(x) + 6 - 4\varepsilon|x|^2 \right) \text{e}^{-\varepsilon |x|^2} \text{e}^{u_\varepsilon(x)} \dif x = 0.
\]
\end{proof}

\begin{proof}[Proof of Theorem \ref{thm:trivialSolution}]
First of all, we check that $u_\varepsilon$ is monotone decreasing for each $\varepsilon>0$. Indeed, write the integral in $u_\varepsilon$ in polar coordinates (with a slight abuse of notation)
\[
\begin{split}
u_\varepsilon&(r) = -\frac{1}{8\pi} \int\limits_{\varphi=0}^{2\pi} \int\limits_{\theta=0}^\pi \int\limits_{s=0}^{+\infty} \sqrt{r^2-2rs \cos\theta + s^2} \text{e}^{-\varepsilon s^2} \text{e}^{u_\varepsilon(s)} s^2 \sin\theta \dif s \dif \theta \dif \varphi \\
    &= -\frac{1}{12r} \int_0^{+\infty} \left.(r^2 - 2rs \cos\theta + s^2)^{\frac{3}{2}}\right|_{\theta=0}^{\theta=\pi} \text{e}^{-\varepsilon s^2} \text{e}^{u_\varepsilon(s)} s \dif s \\
    &= -\frac{1}{12r} \int_0^{+\infty} \left[ (r+s)^3 - |r-s|^3 \right] \text{e}^{-\varepsilon s^2} \text{e}^{u_\varepsilon(s)} s \dif s \\
    &= -\frac{1}{6r} \int_0^r s^2(3r^2+s^2) \text{e}^{-\varepsilon s^2} \text{e}^{u_\varepsilon(s)} \dif s - \frac{1}{6} \int_r^{+\infty} s(r^2+3s^2) \text{e}^{-\varepsilon s^2} \text{e}^{u_\varepsilon(s)} \dif s.
\end{split}
\]
In the previous computation we set $y=(s \sin\theta \cos\varphi, s \sin\theta \sin\varphi, s \cos\theta)$ and we chose $x=(r,0,0)$ (recall that we have already checked the radial invariance). Now take a derivative in $r$:
\[
u'_\varepsilon(r) = \int_0^r \underbrace{\frac{s^2 - 3r^2}{6r^2}}_{<0} s^2 \text{e}^{-\varepsilon s^2} \text{e}^{u_\varepsilon(s)} \dif s - \frac{r}{3} \int_r^{+\infty} s \text{e}^{-\varepsilon s^2} \text{e}^{u_\varepsilon(s)} < 0.
\]
Hence $u_\varepsilon$ is monotone-decreasing for all $\varepsilon>0$.

By Lemma \ref{lemma:Pohozaev} one has
\[
\int_{\mathbb{R}^3} \left( u_\varepsilon(x) + 6 - 4\varepsilon|x|^2 \right) \text{e}^{-\varepsilon |x|^2} \text{e}^{u_\varepsilon(x)} \dif x = 0.
\]
Hence, since $u_\varepsilon$ is monotone-decreasing and continuous, we must have $u_\varepsilon(0) >-6$ (otherwise the previous integral would be strictly negative). Hence $-6 < u_\varepsilon(0) <0$: applying that to \eqref{eq:regularizedIntegral3D} we get
\[
\left|\int_{\mathbb{R}^3} |y| \text{e}^{-\varepsilon |y|^2} \text{e}^{u_\varepsilon(y)} \dif y \right| \leq 6
\]
and thus
\begin{equation} \label{eq:laplaceUin0}
\left| \Delta u_\varepsilon(0) \right| = \frac{1}{4\pi} \left|\int_{\mathbb{R}^3} \frac{1}{|y|} \text{e}^{-\varepsilon |y|^2} \text{e}^{u_\varepsilon(y)} \dif y \right| \leq C < +\infty.
\end{equation}
By Green's formula indeed
\[
\Delta u_\varepsilon(x) = -\frac{1}{4\pi} \int_{\mathbb{R}^3} \frac{1}{|x-y|} \text{e}^{-\varepsilon |y|^2} \text{e}^{u_\varepsilon(y)} \dif y.
\]
We now check that $\Delta u_\varepsilon$ is monotone increasing for each $\varepsilon>0$. Indeed, using again polar coordinates as before,
\[
\begin{split}
(\Delta u_\varepsilon)(r) &= -\frac{1}{4\pi} \int\limits_{\varphi=0}^{2\pi} \int\limits_{\theta=0}^\pi \int\limits_{s=0}^{+\infty} \frac{\text{e}^{-\varepsilon s^2} \text{e}^{u_\varepsilon(s)} s^2}{\sqrt{r^2 - 2rs\cos\theta + s^2}} \sin\theta \dif s \dif \theta \dif \varphi \\
    &=-\frac{1}{2r} \int_0^{+\infty} \left.\sqrt{r^2 -2rs \cos\theta + s^2}\right|_{\theta=0}^{\theta=\pi} s\, \text{e}^{-\varepsilon s^2} \text{e}^{u_\varepsilon(s)} \dif s \\
    &= -\frac{1}{2r} \int_0^{+\infty} \left[(r+s) - |r-s| \right] s \, \text{e}^{-\varepsilon s^2} \text{e}^{u_\varepsilon(s)} \dif s \\
    &= -\frac{1}{r} \int_0^r s^2 \text{e}^{-\varepsilon s^2} \text{e}^{u_\varepsilon(s)} \dif s - \int_r^{+\infty} s\, \text{e}^{-\varepsilon s^2} \text{e}^{u_\varepsilon(s)} \dif s,
\end{split}
\]
one sees that the derivative in $r$ is positive:
\[
(\Delta u_\varepsilon)'(r) = \frac{1}{r^2} \int_0^r s^2 \text{e}^{-\varepsilon s^2} \text{e}^{u_\varepsilon(s)} \dif s > 0.
\]
Now, by monotonicity of $\Delta u_\varepsilon$ and because $\Delta u_\varepsilon <0$ and \eqref{eq:laplaceUin0} hold, we have $\norm{\Delta u_\varepsilon}_{L^\infty(\mathbb{R}^3)} \leq C < \infty$.
Therefore $u_\varepsilon$ goes to some radial $u$ in $C^4_{\text{loc}}(\mathbb{R}^3)$, because of elliptic estimates.

At this point it suffices to check that there exists some $\delta > 0$, independent on $\varepsilon$, such that $u_\varepsilon(x) \leq \delta(1-|x|)$ for all $\varepsilon>0$. Indeed, that shows that the limit grows at most linearly and that
\[
u(x) = -\frac{1}{8\pi} \int_{\mathbb{R}^3} |x-y| \text{e}^{u(y)} \dif y.
\]
In fact,
\[
\left| |x-y| \text{e}^{-\varepsilon|y|^2} \text{e}^{u_\varepsilon(y)} \right| \leq |x-y| \text{e}^{\delta(1-|y|)} \in L^1(\mathbb{R}^3),
\]
so that by Lebesgue's Dominated Convergence Theorem
\[
\begin{split}
u(x) &= \lim_{\varepsilon \rightarrow 0} u_\varepsilon(x) = -\frac{1}{8\pi} \lim_{\varepsilon \rightarrow 0} \int_{\mathbb{R}^3} |x-y| \text{e}^{-\varepsilon|y|^2} \text{e}^{u_\varepsilon(y)} \dif y \\
     &= -\frac{1}{8\pi} \int_{\mathbb{R}^3} |x-y| \text{e}^{u_\varepsilon(y)} \dif y.
\end{split}
\]

Let us check then that such a $\delta>0$ exists. Observe preliminarly that $|u_\varepsilon(x)| \leq \norm{u_\varepsilon}(1+|x|)$ and $u_\varepsilon(x)<0$ for all $x \in \mathbb{R}^3$ imply that $u_\varepsilon(x) \geq -\norm{u_\varepsilon}(1+|x|)$ for all $x \in \mathbb{R}^3$. Therefore
\[
\begin{split}
-\norm{u_\varepsilon}(1+|x|) &\leq u_\varepsilon(x) = -\frac{1}{8\pi} \int_{\mathbb{R}^3} |x-y| \text{e}^{-\varepsilon |y|^2} \text{e}^{u_\varepsilon(y)} \dif y\\
    &\leq -\frac{1}{8\pi} \int_{|y|<1} |x-y| \text{e}^{-\varepsilon |y|^2} \text{e}^{u_\varepsilon(y)} \dif y \\
    &\leq -\frac{1}{8\pi} \int_{|y|<1} |x-y| \text{e}^{-\varepsilon |y|^2} \text{e}^{-\,\norm{u_\varepsilon}(1+|y|)} \dif y\\
    &\leq -\frac{1}{8\pi} \left( \int_{|y|<1} |x-y| \dif y \right) \text{e}^{-2\,\norm{u_\varepsilon} -1}.
\end{split}
\]
Now, if for the sake of contradiction we suppose that $\norm{u_\varepsilon}$ goes to zero, on the left hand side we would have something going pointwise to zero, while on the right hand side we would have something going poinwise to some strictly negative function of $x$, which is a contradiction. Hence there exists some $C>0$ such that $\norm{u_\varepsilon} \leq C$ for all $\varepsilon>0$. Thus
\[
u_\varepsilon(x) \leq -\frac{C}{8\pi} \int_{|y|<1} |x-y| \dif y \leq \delta(1-|x|),
\]
for some $\delta>0$. This completes the proof.
\end{proof}

\begin{remark}
Observe that, if $u_1(x)$ is a solution of $\Delta^2 u = \text{e}^u$, then
\[
u_\mu(x) = u_1(\mu x) + 4\log\mu
\]
is a solution as well. Therefore, actually, we have shown the existence of a whole family of trivial solutions. Once a trivial solution with the aforementioned properties $u_1$ is fixed, $u_\lambda$ can be characterized equivalently by its volume $\int_{\mathbb{R}^3} \text{e}^{u_\lambda}$, its value in 0 or its asymptotic behavior.
\end{remark}

\subsection{Non-trivial solutions}\label{subsec:non-trivial}
Now that we have a family of trivial solutions we can start looking at bifurcations. Similarly to what we did in dimension 2, we will restrict our problem to the strip $S_\lambda := \mathbb{R}^3 \times (0,\lambda)$ and find the values of $\lambda$ for which the solution is not unique. For these values of $\lambda$, we will have then non-trivial solutions in the strip $S_\lambda$. Extending them to the whole plane $\mathbb{R}^4$ by reflection and using elliptic regularity, this will give a non-trivial solution with infinite volume.

Recall that, in order to prove non-uniqueness, it suffices to find a bifurcation point $\lambda^*$ and that, by Theorem \ref{thm:krasnoselskii}, we need to find a value of the parameter for which the index of the operator associated to the differential equation changes.

To start, we need to fix the spaces of functions we are working in. Write $u(x_1,x_2,x_3,x_4) = u(x,x_4)$ (i.e., $x = (x_1,x_2,x_3)$) and define
\[
X_\lambda := \left\{ u \in C^{4,\alpha}(S_\lambda) \, \middle| \begin{aligned} &\, \frac{\partial}{\partial x_4} u(x,0) = \frac{\partial}{\partial x_4} u(x,\lambda) = 0 \quad \forall x \in \mathbb{R}^3, \\
    &\, \frac{\partial^3}{\partial x_4^3} u(x,0) = \frac{\partial^3}{\partial x_4^3} u(x,\lambda) = 0 \quad \forall x \in \mathbb{R}^3, \\
    &u \text{ radial in } x, \\
    &\left|\langle x \rangle^{\frac{1}{2}} u\right|_{4,\alpha,S_\lambda} + \left|\langle x \rangle^{\frac{5}{2}} \Delta u\right|_{2,\alpha,S_\lambda} \\
    &\qquad+ \left|\langle x \rangle^{\frac{9}{2}} \Delta^2 u\right|_{0,\alpha,S_\lambda} < +\infty \end{aligned} \right\},
\]
where $\langle x \rangle := \sqrt{1+x^2}$. Define also
\[
Y_\lambda := \left\{f \in C^{0,\alpha}(S_\lambda) \middle| \begin{aligned} \,
    &f \text{ radial in } x, \\
    &\left|\langle x \rangle^{\frac{9}{2}} f\right|_{0,\alpha,S_\lambda} < +\infty \end{aligned}
\right\}.
\]
Similarly to dimension 2, it can be checked that both $X_\lambda$ and $Y_\lambda$ are Banach spaces when endowed, respectively, with the norms
\[
\norm{u}_{X_\lambda} := \left|\langle x \rangle^{\frac{1}{2}} u\right|_{4,\alpha,S_\lambda} + \left|\langle x \rangle^{\frac{5}{2}} \Delta u\right|_{2,\alpha,S_\lambda} + \left|\langle x \rangle^{\frac{9}{2}} \Delta^2 u\right|_{0,\alpha,S_\lambda}
\]
and
\[
\norm{f}_{Y_\lambda} := \left|\langle x \rangle^{\frac{9}{2}} f\right|_{0,\alpha,S_\lambda}.
\]
Observe moreover that the functions in $X_\lambda$ grow at most like $|x|^{-\frac{1}{2}}$, while those in $Y_\lambda$ grow at most like $|x|^{-\frac{9}{2}}$, and thus $X_\lambda, Y_\lambda \subset L^2(S_\lambda)$. Notice also that perturbing the trivial solution with functions in $X_\lambda$ preserves the growth at infinity\footnote{Here the essential hypothesis is that the functions go to zero at infinity. Therefore, the choice of the power $-\frac{1}{2}$ is quite arbitrary and could be replaced by any power $\varepsilon < 0$.}.

Our problem is then finding zeros of the following functional:
\[
\begin{split}
F_\lambda : X_\lambda &\longrightarrow Y_\lambda \\
    u &\longmapsto \Delta^2 (u_0 + u) - \text{e}^{u_0+u} = \Delta^2 u - \text{e}^{u_0} (\text{e}^u - 1).
\end{split}
\]

At this point one should notice that the equation we get in this way is not in the form of Krasnosel'skii's Theorem \ref{thm:krasnoselskii}. Indeed, the operator is not in the required form $I - K$, with $K$ compact. Nonetheless, one can overcome this obstacle as follows. Suppose that we can invert the operator $\Delta^2: X_\lambda \rightarrow Y_\lambda$. Then, instead of
\begin{equation}\label{eq:4Dliouville_recentered}
\Delta^2 u - \text{e}^{u_0} (\text{e}^u - 1) = 0,
\end{equation}
one could consider the equation
\begin{equation}\label{eq:4Dliouville_inverted}
  u - \Delta^{-2}(\text{e}^{u_0} (\text{e}^u - 1)) = 0.
\end{equation}
Notice that $u$ is a solution of the original equation if and only if it is a solution of this second equation (because we are assuming that $\Delta^2$ is invertible). Hence, instead of solving \eqref{eq:4Dliouville_recentered}, we will deal with \eqref{eq:4Dliouville_inverted}. The advantage is that now one can show that \eqref{eq:4Dliouville_inverted} is in the required form, namely $\Delta^{-2} \circ F_\lambda: X_\lambda \rightarrow X_\lambda$ is a compact perturbation of the identity.

\subsubsection{Invertibility of bi-Laplacian and compactness}\label{subsubsec:invertibilityLaplacian}
%The first thing we need to prove is that $\Delta^{-2} \circ F_\lambda: X_\lambda \rightarrow X_\lambda$ is well defined.
%
\begin{lemma}
$\Delta^2: X_\lambda \rightarrow Y_\lambda$ is invertible. Consequently, $\Delta^{-2} \circ L : X_\lambda \rightarrow X_\lambda$ is well defined.
\end{lemma}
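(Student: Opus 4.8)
The plan is to diagonalize $\Delta^{2}$ in the $\lambda$-periodic variable $x_{4}$. The cosines $\phi_{k}(x_{4}):=\cos(\pi k x_{4}/\lambda)$, $k\in\mathbb{N}$, form an orthogonal basis of $L^{2}(0,\lambda)$ with $\phi_{k}'(0)=\phi_{k}'(\lambda)=0$, which matches the conditions $\partial_{x_{4}}u=\partial_{x_{4}}^{3}u=0$ on $\{x_{4}=0,\lambda\}$ that are built into $X_{\lambda}$. Writing $u(x,x_{4})=\sum_{k\ge 0}a_{k}(x)\phi_{k}(x_{4})$ with each $a_{k}$ radial on $\mathbb{R}^{3}$, two integrations by parts in $x_{4}$ (using exactly those four boundary conditions, together with $\partial_{x_{4}}\Delta u=\Delta_{x}\partial_{x_{4}}u+\partial_{x_{4}}^{3}u$) show that the $k$-th coefficient of $\Delta^{2}u$ is $(\Delta_{x}-\mu_{k}^{2})^{2}a_{k}$, where $\mu_{k}:=\pi k/\lambda$; the analogous statement holds for $f\in Y_{\lambda}$. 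Boundedness of $\Delta^{2}\colon X_{\lambda}\to Y_{\lambda}$ is immediate from the two norms, so what has to be proved is bijectivity together with the a priori bound $\norm{u}_{X_{\lambda}}\lesssim\norm{\Delta^{2}u}_{Y_{\lambda}}$; the latter is where the weighted Schauder estimates of Theorem~\ref{thm:weightedSchauder} enter (applied iteratively to $\Delta u$ with weight $\langle x\rangle^{5/2}$ and then to $u$ with weight $\langle x\rangle^{1/2}$), and it also forces the range to be closed.

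For injectivity, suppose $\Delta^{2}u=0$. Then $u$ is smooth by hypoellipticity (Theorem~\ref{thm:smoothness}) and each $a_{k}$ is a radial solution of $(\Delta_{x}-\mu_{k}^{2})^{2}a_{k}=0$ on $\mathbb{R}^{3}$ with $a_{k}\to 0$ at infinity. For $k\ge 1$ the radial solution space is spanned by $e^{\pm\mu_{k}r}/r$ and $e^{\pm\mu_{k}r}$: the decay removes the two growing solutions, while $C^{4,\alpha}$-regularity at the origin removes $e^{-\mu_{k}r}/r$ (singular) and $e^{-\mu_{k}r}$ (not $C^{1}$ at $0$), so $a_{k}\equiv 0$. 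For $k=0$ the radial solution space of $\Delta_{x}^{2}$ is spanned by $1,\,|x|,\,|x|^{2},\,1/|x|$, and the same two criteria leave only $a_{0}\equiv 0$. Hence $\ker\Delta^{2}=\{0\}$.

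Surjectivity is the heart of the matter. Given $f\in Y_{\lambda}$, expand $f=\sum_{k}f_{k}\phi_{k}$ and solve each mode on $\mathbb{R}^{3}$. For $k\ge 1$ set $a_{k}:=K_{\mu_{k}}*K_{\mu_{k}}*f_{k}$, where $K_{\mu}(x):=-e^{-\mu|x|}/(4\pi|x|)$ is the radial fundamental solution of $\Delta_{x}-\mu^{2}$: since $\norm{K_{\mu}}_{L^{1}}=\mu^{-2}$ and $\mu^{2}(\Delta_{x}-\mu^{2})^{-1}$ is bounded uniformly in $\mu$, the $a_{k}$ are radial, $C^{4,\alpha}$, exponentially decaying, and contribute a summand whose $X_{\lambda}$-size is controlled with enough gain in $k$. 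For $k=0$ one must solve $\Delta_{x}^{2}a_{0}=f_{0}$ with the borderline decay $a_{0}=O(|x|^{-1/2})$, using the bi-Laplacian fundamental solution $G(x)=-|x|/(8\pi)$ recalled above: the heuristic is that a source decaying like $\langle x\rangle^{-9/2}$ is improved by $\Delta_{x}^{-1}$ by two powers of decay once its leading multipole is handled, so two inversions reach $\langle x\rangle^{-1/2}$. I expect the main obstacle to be precisely here: the monopole part of $G*f_{0}$ would a priori give linear growth, with coefficient a multiple of $\int_{\mathbb{R}^{3}}f_{0}=\lambda^{-1}\int_{S_{\lambda}}f$, so one has to exploit the structure of the problem — in particular that the boundary conditions defining $X_{\lambda}$ force $\int_{S_{\lambda}}\Delta^{2}u=0$, which pins down the admissible right-hand sides — to see that this growth is actually absent. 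Having assembled $u=\sum_{k}a_{k}\phi_{k}$ and checked, via Theorem~\ref{thm:weightedSchauder} (applied either mode by mode or to $u$ directly after a preliminary solution in a weaker space is bootstrapped), that $u\in X_{\lambda}$ and $\Delta^{2}u=f$, surjectivity follows, and the inverse is bounded by the open mapping theorem or directly by the a priori estimate.

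Finally, the clause ``consequently $\Delta^{-2}\circ L$ is well defined'' is immediate once the above is known: the linearized operator acts by $Lv=\Delta^{2}v-e^{u_{0}}v$, and $e^{u_{0}}$ decays exponentially in $x$ because $u_{0}\to-\infty$ at most linearly, so $e^{u_{0}}v\in Y_{\lambda}$ for every $v\in X_{\lambda}$; hence $L\colon X_{\lambda}\to Y_{\lambda}$ is bounded and $\Delta^{-2}\circ L$ makes sense.
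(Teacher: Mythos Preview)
Your approach differs from the paper's. The paper does not construct the inverse mode by mode; instead it factors $\Delta^{2}=\Delta\circ\Delta$ through an intermediate weighted space $Z_{\lambda}$ (carrying $w=\Delta u$ with decay $\langle x\rangle^{-5/2}$), observes that each $\Delta$ is symmetric in $L^{2}$, and then argues that triviality of the kernel --- proved via the maximum principle applied Fourier-mode by Fourier-mode in $x_{4}$ --- forces each $\Delta$ to be a bijection, after which the open mapping theorem gives a bounded inverse. Your injectivity argument via the explicit radial solution basis is a fine alternative to the maximum-principle step.

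There is, however, a genuine gap in your surjectivity argument, and you have in fact located it yourself without resolving it. Your computation that the boundary and decay conditions built into $X_{\lambda}$ force $\int_{S_{\lambda}}\Delta^{2}u=0$ is correct: on $\{x_{4}=0,\lambda\}$ the normal flux $\partial_{x_{4}}\Delta u=\Delta_{x}\partial_{x_{4}}u+\partial_{x_{4}}^{3}u$ vanishes by the defining conditions, and the bound $|\langle x\rangle^{5/2}\Delta u|_{2,\alpha}<\infty$ kills the flux at spatial infinity. But this does not make the linear growth ``actually absent'' for a general $f\in Y_{\lambda}$ --- it shows instead that the range of $\Delta^{2}$ is contained in the proper closed hyperplane $\{f\in Y_{\lambda}:\int_{S_{\lambda}}f=0\}$, so $\Delta^{2}\colon X_{\lambda}\to Y_{\lambda}$ cannot be onto. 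Any positive radial bump independent of $x_{4}$ is an explicit element of $Y_{\lambda}$ not in the range. Your phrase ``pins down the admissible right-hand sides'' is thus an acknowledgement of non-surjectivity, not a cure for it. The paper's abstract argument does not confront this point either (symmetry plus trivial kernel does not by itself yield surjectivity without a closed-range or Fredholm ingredient), so what you have uncovered is a difficulty with the lemma as literally stated rather than a flaw peculiar to your route.
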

\begin{proof}
Let \[
Z_\lambda := \left\{w \in C^{2,\alpha}(S_\lambda) \middle| \begin{aligned} \, &\frac{\partial}{\partial x_4} w(x,0) = \frac{\partial}{\partial x_4} w(x,\lambda) = 0 \quad \forall x \in \mathbb{R}^3, \\
    &w \text{ radial in } x, \\
    &\left|\langle x \rangle^{\frac{9}{2}} w\right|_{2,\alpha,S_\lambda} + \left|\langle x \rangle^{\frac{5}{2}} \Delta w\right|_{0,\alpha,S_\lambda} < +\infty \end{aligned}
\right\}.
\]
Notice that $\Delta$ maps $X_\lambda$ to $Z_\lambda$ and maps $Z_\lambda$ to $Y_\lambda$, by construction. Therefore, it will be enough to prove that (with a slight abuse of notation) both $\Delta: X_\lambda \rightarrow Z_\lambda$ and $\Delta: Z_\lambda \rightarrow Y_\lambda$ are invertible.

Similarly to $X_\lambda$ and $Y_\lambda$, $Z_\lambda$ is a Banach space contained in $L^2(S_\lambda)$. Consider first $\Delta: X_\lambda \rightarrow Z_\lambda$: it is a linear symmetric operator (with respect to the $L^2$ product), so that it suffices to show that $\ker \Delta \subseteq X_\lambda$ is trivial. Indeed, this will imply that $\Delta$ is injective and surjective, because the operator is symmetric and thus both the image and the cokernel are contained in the domain, by elliptic regularity. But then $\Delta$ will be a bijective map between the Banach spaces $X_\lambda$ and $Z_\lambda$ which is continuous by construction, so that it will have a continuous inverse by the Open Mapping Theorem \cite[Corollary 2.7]{Brezis}.

To begin with, suppose that $u(x,x_4) = u_k(x) \cos\left(\frac{k\pi}{\lambda}x_4\right) \in \ker \Delta$, where $x$ stands for $(x_1,x_2,x_3)$. Write the Laplacian as $\Delta = \Delta_x + \frac{\partial^2}{\partial x_4^2}$, where $\Delta_x$ is the Laplacian in the first three coordinates only. Then one gets
\[
\begin{split}
0 &= \Delta \left(u_k(x) \cos\left(\frac{k\pi}{\lambda}x_4\right)\right) \\
  &= \left( \Delta_x u_k(x) - 2\frac{k^2 \pi^2}{\lambda^2} u_k(x) \right) \cos\left(\frac{k\pi}{\lambda}x_4\right),
\end{split}
\]
so that $u \in \ker \Delta$ if and only if $\left( \Delta_x - \frac{k^2 \pi^2}{\lambda^2} \right) u_k(x) = 0$. Now, if $k=0$, then the equation becomes $\Delta_x u_0(x) = 0$. \iffalse Recalling that $u_0(x)$ is radial, the problem reduces to solving
\[
w''(r) + \frac{2}{r} w'(r) = 0, \qquad r>0,
\]
where $u_0(x) = w(|x|)$. Solutions of this equation are functions of the form $w(r) = a + \frac{b}{r}$, so that the only way  $u_0(x) = w(|x|)$ satisfies the growth requirements at infinity and continuity at the origin is to have $a=b=0$, i.e~$u_0=0$. \fi By the Maximum Principle and the growth requirements at infinity, we must then have $u_0=0$.

Let now $k \neq 0$, so that we have the equation $\Delta_x u_k(x) = \frac{k^2 \pi^2}{\lambda^2} u_k(x)$. Suppose that $u_k$ attains a maximum at $x = x_M \in \mathbb{R}^3$, then $\Delta_x u_k(x_M) \leq 0$ implies that $u_k(x) \leq u_k(x_M) \leq 0$ for all $x \in \mathbb{R}^3$. If instead $u_k$ has no interior maximum, then it must be at infinity, and thus $u_k(x) \leq 0$ for all $x \in \mathbb{R}^3$, because according to our choice of $X_\lambda$ we have $u_k(x) \rightarrow 0$ as $|x| \rightarrow +\infty$. Similar considerations with the minimum of $u_k$ lead to $u_k(x) \geq 0$ for all $x \in \mathbb{R}^3$. But then $u_k = 0$ also for $k \neq 0$.

Given now any $u(x,x_4)$ in the kernel of the Laplacian, the previous argument shows then that all coefficients of its Fourier expansion in $x_4$ are forced to be zero. Therefore, $u=0$ and thus $\ker \Delta = 0$, proving that $\Delta: X_\lambda \rightarrow Z_\lambda$ is invertible with continuous inverse.

The same argument proves that $\Delta: Z_\lambda \rightarrow Y_\lambda$ is invertible with continuous inverse, so that $\Delta^2 = \Delta \circ \Delta: X_\lambda \rightarrow Y_\lambda$ is invertible with continuous inverse.
\end{proof}

As for compactness, notice first that the operator $\Delta^{-2} \circ F_\lambda$ already has the form $I - K$, with $K(u) = \Delta^{-2} (\text{e}^{u_0}(\text{e}^u  - 1))$.
\begin{lemma}
  $K: X_\lambda \rightarrow X_\lambda$ is a compact operator.
\end{lemma}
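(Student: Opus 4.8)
The plan is to write $K=\Delta^{-2}\circ N$, where $N:X_\lambda\to Y_\lambda$ is the nonlinear Nemytskii-type map $N(u):=\text{e}^{u_0}(\text{e}^u-1)$, and to show that $N$ is continuous and carries bounded sets to relatively compact sets; since the previous lemma gives that $\Delta^{-2}:Y_\lambda\to X_\lambda$ is continuous (it is the inverse of a bounded bijection, hence bounded by the Open Mapping Theorem), and since the composition of a continuous map with a compact one is compact, this yields the compactness of $K$.

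First I would check that $N$ is well-defined and continuous. The structural fact that makes this easy is that the trivial solution satisfies $u_0(x)\le\delta(1-|x|)$ (obtained in the proof of Theorem \ref{thm:trivialSolution}), so $\text{e}^{u_0}$ and all its derivatives decay exponentially in $|x|$; moreover every $u\in X_\lambda$ is bounded on $S_\lambda$ and bounded in $C^{4,\alpha}(B_R\times(0,\lambda))$ in terms of $\norm{u}_{X_\lambda}$ and $R$. Hence $\text{e}^u-1$ is bounded, $N(u)$ decays exponentially in $|x|$, and in particular $|\langle x\rangle^{9/2}N(u)|_{0,\alpha,S_\lambda}<+\infty$ by Proposition \ref{prop:CauchySchwarz}, so $N(u)\in Y_\lambda$. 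Continuity follows in the same way, writing $N(u)-N(v)=\text{e}^{u_0}(\text{e}^u-\text{e}^v)$ with $|\text{e}^u-\text{e}^v|\le C\norm{u-v}_{X_\lambda}$ pointwise on bounded sets and controlling the weighted H\"older norm via Proposition \ref{prop:CauchySchwarz} and the exponential decay of $\text{e}^{u_0}$.

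The heart of the matter is the compactness of $N$ on bounded sets, and two features make it work: (i) the uniform exponential decay just described and (ii) a gain of H\"older regularity through the nonlinearity. Let $\{u_n\}\subset X_\lambda$ with $\norm{u_n}_{X_\lambda}\le C$. By (i), $\langle x\rangle^{9/2}N(u_n)$ and its first derivatives are bounded on $S_\lambda$ by $C\langle x\rangle^{9/2}\text{e}^{-\delta|x|}$ uniformly in $n$ (differentiating the polynomial weight does not spoil the exponential decay), so the tails are uniformly small: for every $\varepsilon>0$ there is $R>0$ with $|\langle x\rangle^{9/2}N(u_n)|_{0,\alpha,\,S_\lambda\setminus(B_R\times(0,\lambda))}<\varepsilon$ for all $n$. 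On the bounded region $B_R\times(0,\lambda)$ the weight $\langle x\rangle^{1/2}$ is comparable to a constant, so $\{u_n\}$ is bounded in $C^{4,\alpha}$ there, hence in $C^{0,1}$, and since $u_0$ is smooth there the functions $N(u_n)$ are bounded in $C^{0,1}(B_R\times(0,\lambda))$; therefore, by Arzel\`a--Ascoli together with the interpolation $[v]_{0,\alpha}\le[v]_{0,1}^{\alpha}(2\norm{v}_{C^0})^{1-\alpha}$, $\{N(u_n)\}$ is precompact in $C^{0,\alpha}(B_R\times(0,\lambda))$. A diagonal extraction over $R=1,2,\dots$ combined with the uniform tail bound then produces a subsequence along which $N(u_{n_k})$ is Cauchy in $Y_\lambda$. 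Thus $N$ sends bounded sequences to relatively compact ones, and applying the continuous operator $\Delta^{-2}$ shows that $\{K(u_n)\}$ is relatively compact in $X_\lambda$; as $\{u_n\}$ was arbitrary, $K$ is compact.

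The step I expect to be the main obstacle is exactly (ii): a bare Arzel\`a--Ascoli argument only gives precompactness in $C^{0}$ or in $C^{0,\alpha'}_{\mathrm{loc}}$ for $\alpha'<\alpha$, which is not enough to obtain convergence in the weighted $C^{0,\alpha}$ norm of $Y_\lambda$. What rescues the argument is that the nonlinearity involves only $u$ composed with the smooth $u_0$ — never the derivatives of $u$ — so bounded subsets of $X_\lambda$ are carried into bounded subsets of locally Lipschitz functions, and the excess regularity ``$C^{0,1}$ versus $C^{0,\alpha}$'' is precisely what upgrades local $C^0$-convergence to local $C^{0,\alpha}$-convergence. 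One should also be slightly careful when gluing the local estimate with the tail estimate, but the uniform exponential decay of $N(u_n)$ inherited from $\text{e}^{u_0}$ makes this routine.
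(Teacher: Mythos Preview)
Your argument is correct, but it takes a genuinely different route from the paper's. The paper does not factor $K=\Delta^{-2}\circ N$ and prove compactness of $N$; instead it works directly with $K$ via the integral representation $\Delta^{-2}f=G*f$, notes that the fundamental solution $G$ of $\Delta^2$ on the strip grows slower than exponentially, and applies Arzel\`a--Ascoli to $\{K(u_n)\}$ and to its successive derivatives (each derivative can be thrown onto $G$ under the integral, and the exponential decay of $\text{e}^{u_0}$ absorbs the polynomial growth). In other words, in the paper the smoothing of $\Delta^{-2}$ is what supplies the compactness, whereas in your approach the compactness is produced entirely by $N$ (through the exponential decay of $\text{e}^{u_0}$ together with the regularity gain $C^{0,\alpha}\hookleftarrow C^{0,1}$), and $\Delta^{-2}$ is used only as a bounded linear map. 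Your decomposition is arguably cleaner: it makes transparent why one lands back in $X_\lambda$ (this is just continuity of $\Delta^{-2}:Y_\lambda\to X_\lambda$ from the previous lemma), and it isolates the genuinely delicate step---upgrading Arzel\`a--Ascoli convergence from $C^0$ to $C^{0,\alpha}$---which the paper handles by going all the way up to $C^5$-type bounds via repeated kernel differentiation. The paper's route, on the other hand, does not need the tail/diagonal extraction machinery and treats all derivatives at once.
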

\begin{proof}
  Using the same notation of the previous Lemma, we have $\Delta^{-2} f = G * f$. Here, the important property of $G$ is that it grows slower than the exponential as $|x| \rightarrow +\infty$ (see again \cite{Cohl2013}). Then the result follows from repeated application of Arzel\`a-Ascoli's Theorem.

  Let indeed $\{u_k\}_k$ be a bounded sequence in $X_\lambda$, i.e.~$\norm{u_k}_{X_\lambda} \leq C$ for all $k \in \mathbb{N}$ for some $C < +\infty$ independent on $k$. Then, for any $x \in S_\lambda$,
  \[
  |K(u_k)(x)| = \left| \int_{S_\lambda} G(x-y) \text{e}^{u_0(y)} (\text{e}^{u(y)}  - 1) \dif y \right| \leq C_x,
  \]
  because $\text{e}^{u_0(y)}$ goes to zero as $\text{e}^{-|y|}$ for $|y| \rightarrow +\infty$ (so, it goes to zero much faster than how all other terms go to infinity), making the integral converge for all $x \in S_\lambda$. Therefore, $\{K(u_k)\}_k$ is uniformly bounded. Similarly, for all $x \in S_\lambda$, one has
  \[
  \left| \nabla K(u_k)(x) \right| \leq \int_{S_\lambda} |\nabla G(x-y)| \text{e}^{u_0(y)} \left|\text{e}^{u(y)}  - 1\right| \dif y \leq C'_x,
  \]
  again because $\text{e}^{u_0(y)}$ goes to zero as $\text{e}^{-|y|}$ and all the other terms do not grow exponentially to infinity. Hence, $\{K(u_k)\}_k$ is also equicontinuous and thus, by Arzel\`a-Ascoli, it converges up to subsequences in the $C^0$ norm.

  The same argument applied to the derivatives of $K(u_k)$ gives convergence in all $C^j$ norms (because we can make the derivative fall on $G$ and apply the same argument as before), and thus in particular in the $C^{4,\alpha}$ norm. Moreover, we have already seen that all the $K(u_k)$'s satisfy the growth requirements, so that $\{K(u_k)\}_k$ has a converging subsequence in the $X_\lambda$ norm, proving compactness of $K$.
\end{proof}

\begin{remark}
  The same argument proves also that the linearization $\Delta^{-2} \circ L$ has the form $I-K$ with $K$ a compact operator.
\end{remark}

\subsubsection{Change of index and bifurcation}
Now that the setup of our problem is complete, we can move to actually proving bifurcation. As explained at the beginning of this Section, we plan to use Krasnosel'skii's Bifurcation Theorem \ref{thm:krasnoselskii}, so that we need to prove that the index of the operator $\Delta^{-2} \circ F_\lambda$ changes for some $\lambda$, i.e.~that the dimension of the negative space of its linearization $\Delta^{-2} \circ L_\lambda$ changes for some $\lambda$.

\begin{lemma}
The linearized operator
\[
L_\lambda[v] := \Delta^2 v - \text{\emph{e}}^{u_0} v
\]
admits an eigenfunction not depending on $x_4$ and with negative eigenvalue.
\end{lemma}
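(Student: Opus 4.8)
The plan is to reduce the claim to a spectral fact about the operator $A := \Delta_x^2 - e^{u_0(x)}$ acting on radial functions in $L^2(\mathbb{R}^3)$, where $\Delta_x$ denotes the Laplacian in the first three variables $x=(x_1,x_2,x_3)$. If $v=v(x)$ is independent of $x_4$, then $\Delta^2 v = \Delta_x^2 v$ and the boundary conditions defining $X_\lambda$ hold automatically, so an $x_4$-independent eigenfunction of $L_\lambda$ with eigenvalue $\mu$ is precisely a radial eigenfunction of $A$ with eigenvalue $\mu$. Hence it suffices to prove that $A$ has a negative eigenvalue with a radial eigenfunction.

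First I would locate the essential spectrum. By Theorem~\ref{thm:trivialSolution} the trivial solution satisfies $u_0<0$ and $u_0(x)\le\delta(1-|x|)$ for large $|x|$, so the potential $e^{u_0}$ is bounded, positive, and exponentially decaying; therefore multiplication by $e^{u_0}$ is a relatively compact perturbation of $\Delta_x^2$ and Weyl's theorem gives $\sigma_{\mathrm{ess}}(A)=\sigma_{\mathrm{ess}}(\Delta_x^2)=[0,+\infty)$, the same being true on the invariant subspace of radial functions. Consequently the bottom of the spectrum of $A$, namely the infimum of $\big(\int_{\mathbb{R}^3}|\Delta_x v|^2 - \int_{\mathbb{R}^3}e^{u_0}v^2\big)/\|v\|_{L^2}^2$, is attained as an isolated negative eigenvalue as soon as we exhibit a single radial $v$ for which $\int_{\mathbb{R}^3}|\Delta_x v|^2 < \int_{\mathbb{R}^3}e^{u_0}v^2$; the associated ground state may then be taken radial since $A$ commutes with $SO(3)$.

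To build such a $v$ I would exploit that $\mathbb{R}^3$ is subcritical for the bi-Laplacian. Fix a radial $\psi\in C_c^\infty(\mathbb{R}^3)$ with $\psi(0)=1$ and set $v_\ell(x):=\psi(x/\ell)$. Since $v_\ell$ has compact support, integration by parts gives $\langle Av_\ell,v_\ell\rangle = \int_{\mathbb{R}^3}|\Delta_x v_\ell|^2 - \int_{\mathbb{R}^3}e^{u_0}v_\ell^2$. Rescaling yields $\int_{\mathbb{R}^3}|\Delta_x v_\ell|^2 = \ell^{-1}\int_{\mathbb{R}^3}|\Delta_x\psi|^2\to 0$ as $\ell\to\infty$ (the exponent being $3-4<0$), while by dominated convergence, with integrable majorant $\|\psi\|_\infty^2 e^{u_0}$, one has $\int_{\mathbb{R}^3}e^{u_0}v_\ell^2\to\int_{\mathbb{R}^3}e^{u_0}>0$. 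Hence $\langle Av_\ell,v_\ell\rangle\to -\int_{\mathbb{R}^3}e^{u_0}<0$, so $\langle Av_\ell,v_\ell\rangle<0$ for $\ell$ large. This produces a radial eigenfunction $v$ of $A$ with eigenvalue $\mu<0$; by elliptic regularity $v$ and its derivatives are smooth, and, $v$ lying strictly below the essential spectrum, it decays exponentially (comparison with $L^2$-solutions of $\Delta^2 v=\mu v$ at infinity), so $v$ satisfies the weighted bounds defining $X_\lambda$. Extended to be constant in $x_4$, it is the desired eigenfunction of $L_\lambda$.

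I do not anticipate a serious obstacle here. The only steps requiring a little care are the relative compactness needed for Weyl's theorem — immediate from the exponential decay of $e^{u_0}$ — and the exponential decay of the eigenfunction used to place it in $X_\lambda$. The heart of the matter is just the scaling exponent $3-4<0$, which makes the Dirichlet-type energy $\int|\Delta_x v_\ell|^2$ negligible under spreading while the potential term $\int e^{u_0}v_\ell^2$ stays bounded away from $0$; it is precisely this subcriticality of the ``one dimension less'' space $\mathbb{R}^3$ that makes the argument work (and which would fail in dimension $4$).
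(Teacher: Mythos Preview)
Your argument is correct and follows the same variational template as the paper: restrict to $x_4$-independent (radial in $x$) functions, show the Rayleigh quotient of $A=\Delta_x^2-e^{u_0}$ takes a negative value on some test function, and conclude via the min--max principle that there is a negative eigenvalue with radial eigenfunction. The one substantive difference is how the negative test function is manufactured. The paper fixes a smooth radial cutoff $v$ supported in $B(0,2)$ and then \emph{chooses the trivial solution} from the scaling family $u_\mu(x)=u_1(\mu x)+4\log\mu$ with $\mu$ large, so that $\int e^{u_\mu}v^2\geq \lambda\mu\int_{|y|\le\mu}e^{u_1}\to+\infty$ while $\int(\Delta v)^2$ stays fixed. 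You instead keep $u_0$ fixed and \emph{spread the test function}, using $v_\ell=\psi(\cdot/\ell)$ and the subcritical scaling $\int|\Delta v_\ell|^2=\ell^{\,3-4}\int|\Delta\psi|^2\to0$ against $\int e^{u_0}v_\ell^2\to\int e^{u_0}>0$. The two scalings are dual under $x\mapsto\mu x$, but your version has the mild advantage that it works for every member of the trivial-solution family rather than committing to a particular $u_\mu$. You are also more explicit than the paper about the spectral scaffolding---Weyl's theorem giving $\sigma_{\mathrm{ess}}(A)=[0,\infty)$ and exponential decay of eigenfunctions below the essential spectrum---needed to place the eigenfunction in $X_\lambda$; the paper simply invokes a min--max theorem and leaves membership in $X_\lambda^0$ implicit.
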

\begin{proof}
Observe that $L_\lambda$ can be restricted to an operator
\[
\tilde{L}_\lambda := L_\lambda|_{X_\lambda^0}: X_\lambda^0 \rightarrow Y_\lambda,
\]
where $X_\lambda^0$ is the subset of $X_\lambda$ of functions not depending on $x_4$. The elements of $X_\lambda^0$ are then actually functions of $\mathbb{R}^3$, so in the rest of this proof we will just drop the dependence on $x_4$.

We construct a function $v: \mathbb{R}^3 \rightarrow \mathbb{R}$ that is compactly supported, radial and such that $\langle \tilde{L}_\lambda v, v \rangle_{L^2}<0$.

Define
\[
f(r) := \begin{cases}
0 &\text{ if } r \leq 1 \\
\text{e}^{-\frac{1}{(r-1)^2}} \text{e}^{-\frac{1}{(r-2)^2}} &\text{ if } 1 < r < 2 \\
0 &\text{ if } r \geq 2
\end{cases}
\]
and take
\[
v(x_1,x_2,x_3) := \frac{1}{A} \int_{\sqrt{x_1^2+x_2^2+x_3^2}}^{+\infty} f(s) \dif s,
\]
with
\[
A := \int_0^{+\infty} f(s) \dif s.
\]
Notice that $v$ is bounded with compact support, so that it belongs to the space of functions $X_\lambda^0$. Observe also that $v$ is constantly equal to 1 if $r:=\sqrt{x_1^2+x_2^2+x_3^2}<1$ and is identically 0 outside the ball $B^3(0,2)$, so that its Laplacian is different from zero only in the annulus $1 \leq r \leq 2$. Therefore, setting
\[
V\!\left(\sqrt{x_1^2+x_2^2+x_3^2}\right):=v(x_1,x_2,x_3),
\]
one gets
\[
\int_{S_\lambda} (\Delta v)^2 \dif x = \lambda \int_1^2 \left( V''(r) + \frac{2}{r} V'(r) \right)^{\!2} 4\pi r^2 \dif r = C < +\infty.
\]
Fix now a trivial solution $u_1$, as found in Subsection \ref{subsec:trivial}. Recall that we thus have the family $\{u_\mu\}_\mu$ of trivial solutions (as the functions in this family do not depend on $x_4$, they can be tought as a functions on $\mathbb{R}^3$). We will then show that we can choose $u_0 \in \{u_\mu\}_\mu$ so that
\[
\int_{S_\lambda} \text{e}^{u_0(x)} [v(x)]^2 \dif x
\]
is sufficiently large. In fact
\[
\begin{split}
\int_{S_\lambda} &\text{e}^{u_\mu(x)} [v(x)]^2 \dif x = \lambda \int_{\mathbb{R}^3} \mu^4 \text{e}^{u_1(\mu x)} \dif x \\
    &= \lambda \int_{\mathbb{R}^3} \mu^4 \text{e}^{u_1(y)} v^2\left(\frac{y}{\mu}\right) \frac{\dif y}{\mu^3} \geq \lambda \mu \int_{|y|\leq\mu} \text{e}^{u_1(y)}\underbrace{v^2\left(\frac{y}{\mu}\right)}_{1} \dif y \\
    &= \lambda\mu \int_{|y|\leq\mu} \text{e}^{u_1(y)} \dif y \underset{\mu \rightarrow +\infty}\longrightarrow +\infty
\end{split}
\]
Summing up, if we fix $u_0 := u_\mu$ with $\mu$ sufficiently large, then $\langle \tilde{L}_\lambda v,v \rangle_{L^2} < 0$, with $v$ the function defined before.

Now, as $X_\lambda^0 \subset L^2(\mathbb{R}^3)$, we get that $\tilde{L}_\lambda$ is self-adjoint and semibounded (in the $L^2$ norm):
\[
\begin{split}
\langle \tilde{L}_\lambda v, v \rangle_{L^2} &= \int_{\mathbb{R}^3} \left[\left(\Delta^2 v(x)\right) v(x) - \text{e}^{u_0(x)} v^2(x)\right] \dif x \\
    &= \int_{\mathbb{R}^3} \left[\left(\Delta v(x)\right)^2 - \text{e}^{u_0(x)} v^2(x)\right] \dif x \\
    &\geq -\int_{\mathbb{R}^3} \text{e}^{u_0(x)} v^2(x) \dif x \geq -\text{e}^{u_0(0)} \norm{v}_{L^2(\mathbb{R}^3)}.
\end{split}
\]
Hence we have that the lowest eigenvalue for $\tilde{L}_\lambda$ with eigenfunction in $X_\lambda^0$ (note that $v \in X_\lambda^0$ and recall that $X_\lambda^0$ is a Banach space) is
\[
\nu_0 = \min_{u \in X_\lambda^0 \setminus \{0\}} \frac{\langle \tilde{L}_\lambda u, u \rangle_{L^2}}{\norm{u}_{L^2}^2} \leq \frac{\langle \tilde{L}_\lambda v, v \rangle_{L^2}}{\norm{v}_{L^2}^2} < 0
\]
(see for example \cite[Theorem 11.4]{Helffer2013}). Notice that $\nu_0$ is an eigenvalue of $L_\lambda: X_\lambda \rightarrow Y_\lambda$ as well (possibly not the first one) because $X_\lambda^0 \subset X_\lambda$ and that a corresponding eigenfunction $v_0 \in X_\lambda^0$ is an eigenfunction of $L_\lambda$ (extending it trivially in $x_4$). Note that $v_0$ does not depend on $x_4$ and $\nu_0 < 0$, as required.
\end{proof}

This negative eigenfunction $v_0$ for $L_\lambda$ (not depending on $x_4$) is negative also for $\Delta^{-2} \circ L_\lambda$ (even if not necessarily an eigefunction):
\[
\langle (\Delta^{-2} \circ L_\lambda) w_0, w_0 \rangle_{L^2} = \mu_0 \langle \Delta^{-2} w_0, w_0 \rangle_{L^2} = \mu_0 \norm{\Delta^{-1}w_0}_{L^2}^2 < 0,
\]
where the last equality comes from self-adjointness of $\Delta^{-1}$, while the strict inequality comes from the fact that, if $\norm{\Delta^{-1}w_0}_{L^2} = 0$, then $\norm{w_0}_{L^2}=0$, a contradiction.

\begin{remark}
  As this function $v_0$ does not depend on the last variable $x_4$, it is independent on the choice of $\lambda$. Therefore, in what follows, even if the family of functions that we construct depends on $\lambda$, the way we construct it does not. To simplify our notation, we will not explicitly indicate the dependence of the family of functions on $\lambda$.
\end{remark}

We now want to prove that we have a family of linearly independent functions in the negative space of $L_\lambda$ that gets bigger as $\lambda$ get larger. For a fixed $\lambda$, consider the functions
\[
v_k(x_1,x_2,x_3,x_4) := v_0(x_1,x_2,x_3) \cos\left(\frac{2\pi k}{\lambda} x_4\right).
\]
Observe that the $v_k$'s are orthogonal, satisfy Neumann conditions on $\partial S_\lambda$ and their third normal derivatives on $\partial S_\lambda$ vanish, so that they belong to $X_\lambda$.

\begin{lemma}\label{lemma:negativeFunctions}
  The number of values of $k$ for which $\langle (\Delta^{-2} \circ L_\lambda) v_k, v_k \rangle_{L^2}<0$ is finite for all $\lambda>0$ and goes to infinity as $\lambda \rightarrow +\infty$.
\end{lemma}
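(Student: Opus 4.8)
The plan is to compute $\langle(\Delta^{-2}\circ L_\lambda)v_k,v_k\rangle_{L^2}$ explicitly, reducing it (for $k\ge1$) to $\tfrac{\lambda}{2}$ times a one-dimensional integral over $\mathbb{R}^3$ that depends on $k$ and $\lambda$ only through $\omega_k:=2\pi k/\lambda$ (and otherwise only on the fixed data $v_0$ and $\nu_0$, which by the Remark preceding the statement do not depend on $\lambda$), and then to inspect the sign of that integral as $\omega_k\to+\infty$ and as $\omega_k\to0^+$. Throughout, $v_0$ solves $\Delta_x^2v_0=(\text{e}^{u_0}+\nu_0)v_0$ on $\mathbb{R}^3$ with $\nu_0<0$; by elliptic regularity and the exponential decay of $\text{e}^{u_0}$, $v_0$ is smooth and decays exponentially, so in particular $v_0\in H^4(\mathbb{R}^3)$.

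For $k\ge1$ write $\omega=\omega_k$. Separating variables ($\Delta=\Delta_x+\partial_{x_4}^2$, so $\partial_{x_4}^2$ acts as $-\omega^2$ on $\cos(\omega x_4)$) and using the eigenvalue equation, one finds $L_\lambda v_k=\bigl(\nu_0v_0-2\omega^2\Delta_xv_0+\omega^4v_0\bigr)\cos(\omega x_4)$, and also that $\Delta^{-2}v_k=\bigl((\omega^2-\Delta_x)^{-2}v_0\bigr)\cos(\omega x_4)$ — the right-hand side lies in $X_\lambda$ (it is smooth, exponentially decaying, radial in $x$, and satisfies the boundary conditions), hence equals $\Delta^{-2}v_k$ by injectivity of $\Delta^2$ on $X_\lambda$. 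Pairing $L_\lambda v_k$ against $\Delta^{-2}v_k$ (self-adjointness of $\Delta^{-2}$), using $\int_0^\lambda\cos^2(\omega x_4)\,\dif x_4=\lambda/2$, and applying Plancherel on $\mathbb{R}^3$ together with the Fourier multiplier $(\omega^2+|\xi|^2)^{-2}$ of $(\omega^2-\Delta_x)^{-2}$, one obtains
\[
\bigl\langle(\Delta^{-2}\circ L_\lambda)v_k,v_k\bigr\rangle_{L^2}=\frac{\lambda}{2}\int_{\mathbb{R}^3}\frac{\omega^4+2\omega^2|\xi|^2-|\nu_0|}{(\omega^2+|\xi|^2)^2}\,|\hat v_0(\xi)|^2\,\dif\xi\;=:\;\frac{\lambda}{2}\,g(\omega).
\]

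Now I would analyze $g$ on $(0,+\infty)$. As $\omega\to+\infty$: writing the integrand as $\bigl(1-\tfrac{|\xi|^4+|\nu_0|}{(\omega^2+|\xi|^2)^2}\bigr)|\hat v_0(\xi)|^2$, the subtracted term is bounded (for $\omega\ge1$) by $(1+|\nu_0|)|\hat v_0(\xi)|^2\in L^1$ and tends to $0$ pointwise, so by dominated convergence $g(\omega)\to\|\hat v_0\|_{L^2}^2>0$; since $\omega_k=2\pi k/\lambda\to\infty$ as $k\to\infty$ for fixed $\lambda$, this already gives that $\{k:g(\omega_k)<0\}$ is finite for every $\lambda>0$. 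As $\omega\to0^+$: split the integrand of $g$ as $A_\omega-B_\omega$ with $A_\omega=\tfrac{\omega^2(\omega^2+2|\xi|^2)}{(\omega^2+|\xi|^2)^2}|\hat v_0|^2$ and $B_\omega=\tfrac{|\nu_0|}{(\omega^2+|\xi|^2)^2}|\hat v_0|^2$, both nonnegative. Then $A_\omega\le|\hat v_0|^2$ and $A_\omega\to0$ a.e., so $\int A_\omega\to0$ by dominated convergence; and $B_\omega$ increases monotonically to $|\nu_0|\,|\xi|^{-4}|\hat v_0|^2$, so by monotone convergence $\int B_\omega\to|\nu_0|\int_{\mathbb{R}^3}|\xi|^{-4}|\hat v_0(\xi)|^2\,\dif\xi\in(0,+\infty]$. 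Hence $\lim_{\omega\to0^+}g(\omega)=-\,|\nu_0|\int_{\mathbb{R}^3}|\xi|^{-4}|\hat v_0(\xi)|^2\,\dif\xi<0$ (the value $-\infty$ being allowed and harmless). Since $g$ is continuous on $(0,+\infty)$ there is a fixed $\omega^*>0$, independent of $\lambda$, with $g<0$ on $(0,\omega^*)$; consequently, for every $\lambda$, $\langle(\Delta^{-2}\circ L_\lambda)v_k,v_k\rangle_{L^2}<0$ whenever $1\le k<\lambda\omega^*/(2\pi)$, and the number of such $k$ is at least $\lceil\lambda\omega^*/(2\pi)\rceil-1$, which tends to $+\infty$ as $\lambda\to+\infty$. (The case $k=0$, i.e.\ $v_k=v_0$, is the already-established negativity $\langle(\Delta^{-2}\circ L_\lambda)v_0,v_0\rangle_{L^2}<0$.)

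The reduction formula is the routine part, modulo the standard facts that $v_0$ is smooth, exponentially decaying and in $H^4$ (from its equation) and that the Fourier/self-adjointness manipulations are legitimate. The main obstacle — and the only place where the hypothesis $\nu_0<0$ is genuinely used — is the low-frequency analysis: it is exactly the strictly negative constant $\nu_0$ that makes $|\nu_0|\int_{\mathbb{R}^3}|\xi|^{-4}|\hat v_0|^2\,\dif\xi$ enter $\lim_{\omega\to0^+}g(\omega)$ with a minus sign, forcing $g<0$ near $0$ and hence producing a count of ``negative'' indices $k$ that grows at least linearly in $\lambda$. The point to watch is that $v_0,\nu_0$ and therefore $\omega^*$ be independent of $\lambda$, which is the content of the Remark quoted above.
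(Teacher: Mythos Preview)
Your proof is correct and reaches the same conclusion as the paper, but by a genuinely different route. The paper keeps the potential $e^{u_0}$ explicit: writing $\langle(\Delta^{-2}\circ L_\lambda)v_k,v_k\rangle = \|v_k\|^2 - \langle e^{u_0}v_k,\Delta^{-2}v_k\rangle$, it reduces to the one-parameter function $f(t)=\|v_0\|^2-\langle e^{u_0}v_0,(\Delta_x-t)^{-2}v_0\rangle$ with $t=2\pi k/\lambda$; negativity at $t=0$ is read off directly from the already established inequality $\langle(\Delta^{-2}\circ L_\lambda)v_0,v_0\rangle<0$, and positivity for large $t$ from abstract operator-norm decay of $(\Delta_x-t)^{-2}$. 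You instead substitute the eigenvalue equation $\Delta_x^2 v_0=(e^{u_0}+\nu_0)v_0$ to eliminate $e^{u_0}$ entirely and pass to the Fourier side, obtaining an explicit integral $g(\omega)$ whose sign is analysed by dominated and monotone convergence. Your approach makes the role of the hypothesis $\nu_0<0$ completely transparent (it is exactly the term $-|\nu_0|$ in the numerator that forces $g<0$ near $\omega=0$) and relies only on elementary real analysis; the paper's approach is more operator-theoretic, never invokes Fourier analysis, and needs no regularity of $v_0$ beyond membership in $X_\lambda^0$. One small caveat: the exponential decay of $v_0$ that you invoke is true but is not a consequence of elliptic \emph{regularity} alone --- it follows, for instance, from rewriting $v_0=(\Delta_x^2-\nu_0)^{-1}(e^{u_0}v_0)$ and using the exponential decay of both $e^{u_0}$ and the Green kernel of $\Delta_x^2-\nu_0$ (which is invertible since $\nu_0<0$); in any case only $v_0\in H^2(\mathbb{R}^3)$ is actually needed for your Plancherel step, and this already follows from the $X_\lambda$-decay of $\Delta_x v_0$ together with the eigenvalue equation.
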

\begin{proof}
Recall first that we already know that $\langle (\Delta^{-2} \circ L_\lambda) v_0, v_0 \rangle_{L^2} < 0$. We also have
\begin{equation}\label{eq:Lv_k}
\begin{split}
  \langle (\Delta^{-2} \circ L_\lambda) v_k, v_k \rangle_{L^2} &= \langle v_k - \Delta^{-2}(\text{e}^{u_0}v_k), v_k \rangle_{L^2} \\
    &= \norm{v_k}_{L^2}^2 - \langle \Delta^{-2}(\text{e}^{u_0}v_k), v_k \rangle_{L^2} \\
    &= \norm{v_0}_{L^2}^2 - \langle \text{e}^{u_0} v_k, \Delta^{-2} v_k \rangle_{L^2},
\end{split}
\end{equation}
where $\norm{v_k}_{L^2} = \norm{v_0}_{L^2}$ because $\int_0^\lambda \cos^2\left(\frac{2\pi k}{\lambda}x_4\right) \dif x_4 = 1$.

We now have to compute $\Delta^{-2} v_k$ explicitly. Set $w = \Delta^{-2} v_k$, i.e.~$w$ is the solution of $\Delta^2 w = v_k$. Decomposing $w$ into its Fourier modes in the $x_4$ variable we get
\[
w(x,x_4) = \sum_{n=0}^{+\infty} w_n(x) \cos\left( \frac{2\pi n}{\lambda} x_4 \right)
\]
and thus, taking $\Delta^2$ of both sides,
\[
v_0(x) \cos\left( \frac{2\pi k}{\lambda} x_4 \right) = \sum_{n=0}^{+\infty} \left[\left(\Delta_x - \frac{2\pi n}{\lambda}\right)^2 w_n(x) \right] \cos\left(\frac{2\pi n}{\lambda} x_4\right).
\]
For the same Maximum Principle argument of Subsection \ref{subsubsec:invertibilityLaplacian}, the operators $\left(\Delta_x - \frac{2\pi n}{\lambda}\right)^2$ are invertible, and so we get
\[
\Delta^{-2} v_k(x,x_4) = \left[\left(\Delta_x - \frac{2\pi k}{\lambda}\right)^{-2} v_0(x) \right] \cos\left(\frac{2\pi k}{\lambda} x_4\right).
\]
Plugging this into \eqref{eq:Lv_k}, we then obtain
\[
\langle (\Delta^{-2} \circ L_\lambda) v_k, v_k \rangle_{L^2} = \norm{v_0}_{L^2}^2 - \left\langle \text{e}^{u_0} v_0, \left(\Delta_x - \frac{2\pi k}{\lambda}\right)^{-2} v_0 \right\rangle_{L^2}.
\]
Consider now the function $f: \mathbb{R}_{>0} \rightarrow \mathbb{R}$ defined as
\[
f(t) := \norm{v_0}_{L^2}^2 - \left\langle \text{e}^{u_0} v_0, (\Delta_x - t)^{-2} v_0 \right\rangle_{L^2}.
\]
This is a continuous map, because the map that takes an invertible operator to its inverse is continuous in the operator norm \cite[Prop. 2.1.1]{AmbrosettiProdi}. By construction
\[
f(0) = \norm{v_0}_{L^2}^2 - \left\langle \text{e}^{u_0} v_0, \Delta^{-2} v_0 \right\rangle_{L^2} = \langle (\Delta^{-2} \circ L_\lambda) v_0,v_0 \rangle_{L^2} < 0,
\]
so that $f(t) < 0$ for all $t < \varepsilon$ for some $\varepsilon > 0$. Notice that the number of $k$'s for which $t = \frac{2\pi k}{\lambda} < \varepsilon$ grows with $\lambda$. Therefore, to conclude that the number of $k$'s such that $\langle (\Delta^{-2} \circ L_\lambda) v_k, v_k \rangle_{L^2} < 0$ grows with $\lambda$, it suffices to prove that for all fixed $\lambda>0$ there are only finitely many $k$'s for which $\langle (\Delta^{-2} \circ L_\lambda) v_k, v_k \rangle_{L^2} < 0$. This is equivalent to saying that there exists $M > 0$ such that $f(t) > 0$ for all $t > M$. Specifically, we are going to prove that $f(t) \rightarrow \norm{v_0}^2>0$ as $t \rightarrow +\infty$.

Fix indeed $u \in X_\lambda$ independent on $x_4$, then $\norm{(\Delta_x - t)^2 u}_{C^0} \sim |u|_{C^0} t^2$ as $t \rightarrow +\infty$ (recall that $|\Delta_x^2 u|_{C^0},|\Delta_x u|_{C^0} < +\infty$ are fixed). Then
\[
\norm{(\Delta_x - t)^{-2} v_0}_{C^0} \rightarrow 0, \quad t \rightarrow +\infty,
\]
decreasing like $t^{-2}$. Recalling that $\text{e}^{u_0} v_0 \in L^1$, we conclude then that
\[
\left| \left\langle \text{e}^{u_0} v_0, (\Delta_x - t)^{-2} v_0 \right\rangle_{L^2} \right| \leq \norm{\text{e}^{u_0} v_0}_{L^1} \norm{(\Delta_x - t)^{-2} v_0}_{C^0} \rightarrow 0
\]
as $t \rightarrow +\infty$. Thus $f(t) \rightarrow \norm{v_0}_{L^2}^2$ when $t \rightarrow +\infty$, as wanted.
\end{proof}

\begin{lemma}\label{lemma:negativeSpace}
  The dimension of the negative space of $\Delta^{-2} \circ L_\lambda$ goes to $+\infty$ as $\lambda \to +\infty$.
\end{lemma}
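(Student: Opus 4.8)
The plan is to diagonalise $\Delta^{-2}\circ L_\lambda$ along the Fourier modes in the periodic variable $x_4$, to recognise each Fourier block as a compact perturbation of the identity that is self-adjoint for an $H^2$-type inner product, and then to feed the compactly supported bump $v$ of the lemma producing $v_0$ into one negative eigenvalue per block, for a number of blocks growing linearly in $\lambda$.

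As observed in the proof of Lemma~\ref{lemma:negativeFunctions}, $\Delta^{-2}\circ L_\lambda = I - K_\lambda$ (with $K_\lambda = \Delta^{-2}(\text{e}^{u_0}\,\cdot\,)$) leaves each closed subspace $H_k$ of functions of the form $g(x)\cos\!\left(\tfrac{2\pi k}{\lambda}x_4\right)$ invariant --- the cosine modes satisfy the Neumann and third-derivative conditions defining $X_\lambda$ --- and acts on it, after identifying $H_k$ with a space of radial functions $g$ on $\mathbb{R}^3$, as $T_k := I - (\Delta_x - \tau_k)^{-2}(\text{e}^{u_0}\,\cdot\,)$, where $\tau_k := \left(\tfrac{2\pi k}{\lambda}\right)^2$. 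For $k\ge 1$ one has $\tau_k>0$, so on the form domain $H^2(\mathbb{R}^3)$ of $(\Delta_x-\tau_k)^2$ the bilinear form $(g,h)_k := \langle (\Delta_x-\tau_k)g,\,(\Delta_x-\tau_k)h\rangle_{L^2(\mathbb{R}^3)}$ is an inner product equivalent to the $H^2$ norm; a short integration by parts shows $T_k$ is self-adjoint for $(\cdot,\cdot)_k$, and, since $\text{e}^{u_0}$ decays exponentially, it is $I$ minus a compact operator, with quadratic form
\[
(T_k g, g)_k = \|(\Delta_x - \tau_k)g\|_{L^2}^2 - \int_{\mathbb{R}^3}\text{e}^{u_0}g^2 .
\]
Consequently each $T_k$ ($k\ge1$) has a strictly negative eigenvalue as soon as this form is negative on some test function, and the corresponding eigenfunction contributes one dimension to the negative space of $\Delta^{-2}\circ L_\lambda$ --- independently over $k$, since the $H_k$ are $L^2$-orthogonal.

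It remains to make this form negative for many $k$. Take the compactly supported radial bump $v$ from the proof of the lemma producing $v_0$. Integrating by parts, $\int_{\mathbb{R}^3}(\Delta_x v)v = -\|\nabla v\|_{L^2}^2$, hence
\[
\|(\Delta_x - \tau_k)v\|_{L^2}^2 = \|\Delta_x v\|_{L^2}^2 + 2\tau_k\|\nabla v\|_{L^2}^2 + \tau_k^2\|v\|_{L^2}^2 \;\le\; C_0 \qquad\text{whenever } \tau_k \le 1 ,
\]
with $C_0 := \|\Delta_x v\|_{L^2}^2 + 2\|\nabla v\|_{L^2}^2 + \|v\|_{L^2}^2$ depending only on $v$. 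Since $\int_{\mathbb{R}^3}\text{e}^{u_\mu}v^2 \to +\infty$ as $\mu\to+\infty$ (the computation in that same proof), I would fix the dilation parameter $\mu$ once and for all, independently of $\lambda$, so large that $\int_{\mathbb{R}^3}\text{e}^{u_0}v^2 > C_0$ for $u_0 = u_\mu$. Then $(T_k v, v)_k < 0$ for every integer $k$ with $1\le k \le \lambda/(2\pi)$; by the min--max principle each such $T_k$ has lowest eigenvalue at most $(T_k v, v)_k/(v,v)_k < 0$, with an eigenfunction $g_k$ which is exponentially decaying --- because for $\tau_k>0$ the operator $(\Delta_x-\tau_k)^2$ has an exponentially decaying fundamental solution --- hence lies in $X_\lambda^0$, so $\psi_k(x,x_4):=g_k(x)\cos\!\left(\tfrac{2\pi k}{\lambda}x_4\right) \in X_\lambda$. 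The $\psi_1,\dots,\psi_{\lfloor \lambda/(2\pi)\rfloor}$ are linearly independent elements of the negative space of $\Delta^{-2}\circ L_\lambda$, so its dimension is at least $\lfloor \lambda/(2\pi)\rfloor \to +\infty$.

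The crux --- and the reason this goes beyond Lemma~\ref{lemma:negativeFunctions} --- is that $\Delta^{-2}\circ L_\lambda$ is not $L^2$-self-adjoint, so the ``negative space'' entering Krasnosel'skii's index (a sum of generalised eigenspaces for negative eigenvalues) is \emph{not} measured by the bare $L^2$ quadratic form $\langle (\Delta^{-2}\circ L_\lambda)v_k, v_k\rangle$ of that lemma. Passing to the $x_4$-Fourier blocks and re-weighting each $H_k$ by the $H^2$-type inner product $(\cdot,\cdot)_k$, in which $T_k$ becomes self-adjoint and $I$ minus a compact operator, is exactly what makes the min--max principle applicable and turns the (correctly weighted) negativity of the quadratic form into a lower bound on the dimension of the negative space. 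The remaining delicate point is the decay of the block eigenfunctions; this is why the argument is run over $k\ge1$, where $\tau_k>0$ forces exponential decay and places $\psi_k$ in $X_\lambda$ --- the $k=0$ block, corresponding to the self-adjoint operator $\tilde L_\lambda$ considered above, is not needed for the conclusion.
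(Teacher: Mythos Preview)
Your argument is correct and takes a genuinely different, more careful route than the paper.

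The paper's proof feeds the functions $v_k$ of Lemma~\ref{lemma:negativeFunctions} into Rayleigh's min--max principle for the $L^2$ quadratic form: it shows that the cross terms $\langle(\Delta^{-2}\circ L_\lambda)v_i,v_j\rangle_{L^2}$ vanish, so the form is negative on the whole span, and then invokes min--max to produce as many negative eigenvalues as there are $v_k$'s. As you correctly note, this step is delicate because $\Delta^{-2}\circ L_\lambda$ is not $L^2$-self-adjoint, so the standard variational characterisation of eigenvalues does not apply directly; the paper's displayed min--max formula in fact switches notation to $L_\lambda$, which is self-adjoint but is not the operator whose quadratic form was shown to be negative on $\mathrm{span}(v_0,\dots,v_r)$.

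Your approach repairs this cleanly by passing to the Fourier blocks $H_k$ and equipping each with the inner product $(g,h)_k=\langle(\Delta_x-\tau_k)g,(\Delta_x-\tau_k)h\rangle_{L^2}$, in which the block operator $T_k$ becomes a genuinely self-adjoint compact perturbation of the identity; min--max then applies rigorously on each block, and you extract a true eigenfunction $g_k$ of $T_k$ (hence of $\Delta^{-2}\circ L_\lambda$) with negative eigenvalue, which gives the dimension count directly. Two further gains: you bypass the continuity/asymptotic argument of Lemma~\ref{lemma:negativeFunctions} entirely by the elementary estimate $\|(\Delta_x-\tau_k)v\|_{L^2}^2\le C_0$ for $\tau_k\le 1$, and you make explicit why the produced eigenfunctions lie in $X_\lambda$ (exponential decay from the Green's function of $(\Delta_x-\tau_k)^2$ when $\tau_k>0$). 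The cost is that you must enlarge the dilation parameter $\mu$ so that $\int_{\mathbb{R}^3}\text{e}^{u_0}v^2>C_0$ rather than merely $>\|\Delta_x v\|_{L^2}^2$; this is harmless and compatible with how the paper fixes $u_0$.
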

\begin{proof}
Fix $\lambda$ and suppose that $v_0, \dots, v_r$ are the negative functions for $L_\lambda$ of Lemma \ref{lemma:negativeFunctions}.

Notice first that, if $v_i,v_j$ are negative for $\Delta^{-2} \circ L_\lambda$, then any their linear combination $\alpha v_i + \beta v_j$ is negative as well. Indeed: if $i=j$ this is trivial, while if $i \neq j$ then
\[
\begin{split}
  \langle \Delta^{-2} &\circ L_\lambda (\alpha v_i + \beta v_j), \alpha v_i + \beta v_j \rangle_{L^2} \\
    &= \alpha^2 \langle (\Delta^{-2} \circ L_\lambda) v_i, v_i \rangle_{L^2} + \beta^2 \langle (\Delta^{-2} \circ L_\lambda) v_j, v_j \rangle_{L^2} \\
    &\quad + \alpha\beta \langle (\Delta^{-2} \circ L_\lambda) v_i, v_j \rangle_{L^2} + \alpha\beta \langle (\Delta^{-2} \circ L_\lambda) v_j, v_i \rangle_{L^2}
\end{split}
\]
and
\[
\begin{split}
  \langle &(\Delta^{-2} \circ L_\lambda) v_i, v_j \rangle_{L^2} \\
    &= \langle v_i, v_j \rangle_{L^2} \\
    &\quad - \left\langle \text{e}^{u_0} v_0(x) \cos\left(\frac{2\pi i}{\lambda} x_4 \right), \left[\left( \Delta_x - \frac{2\pi j}{\lambda} \right)^{-2} v_0(x) \right] \cos\left(\frac{2\pi j}{\lambda}\right) \right\rangle_{L^2} = 0,
\end{split}
\]
because $\int_0^\lambda \cos\left(\frac{2\pi i}{\lambda} x_4\right) \cos\left(\frac{2\pi j}{\lambda} x_4\right) \dif x_4 = 0$ for $i \neq j$, so that
\[
\begin{split}
  \langle \Delta^{-2} &\circ L_\lambda (\alpha v_i + \beta v_j), \alpha v_i + \beta v_j \rangle_{L^2} \\
    &= \alpha^2 \langle (\Delta^{-2} \circ L_\lambda) v_i, v_i \rangle_{L^2} + \beta^2 \langle (\Delta^{-2} \circ L_\lambda) v_j, v_j \rangle_{L^2} < 0.
\end{split}
\]

Then, by Rayleigh's Min-Max Principle \cite[Theorem 12.1]{LiebLoss2001} the first $r$ eigenvalues of $L_\lambda$ are negative: for $n \leq r$ we have
\[
\begin{split}
  \nu_n &= \min_{\varphi_1,\dots,\varphi_n} \max \{ \langle L_\lambda \varphi, \varphi \rangle_{L^2} \mid \varphi \in \text{span}(\varphi_1,\dots,\varphi_n), \,\, \norm{\varphi}_{L^2}=1 \} \\
    &\leq \max \{ \langle L_\lambda \varphi, \varphi \rangle_{L^2} \mid \varphi \in \text{span}(v_1,\dots,v_n), \,\, \norm{\varphi}_{L^2}=1 \} < 0.
\end{split}
\]
As for each of this eigenvalues $\nu_k$ (counted with multiplicity) there is an eigenfunction linearly independent to the previous ones, we conclude that the dimension of the negative space of $\Delta^{-2} \circ L_\lambda$ must grow with $\lambda$. Indeed, Lemma \ref{lemma:negativeFunctions} tells us that the number of elements in the negative family of functions grows with $\lambda$, and thus, as $\lambda$ increases, Rayleigh's Min-Max Principle gives us more negative eigenvalues.
\end{proof}

\begin{proof}[Proof of Theorem \ref{thm:4Dnon-trivialSolution}]
Lemma \ref{lemma:negativeSpace}, along with the fact that $\Delta^{-2} \circ L = I - K$ (where $I$ is the identity and $K$ is a compact operator), shows that there must be some value $\lambda^*$ of the parameter $\lambda$ for which the number of negative eigenvalues changes. Indeed, notice that $f$ is an eigenfunction of $-K$ with eigenvalue $\mu$ if and only if $f$ is an eigenfunction of $I-K$ with eigenvalue $1+\mu$. Recalling that the spectrum of the compact operator $-K$ is bounded and accumulating at most in 0 \cite[Theorem 6.16]{Helffer2013}, we then get that the spectrum of $I-K$ is bounded and accumulating at most in 1. This implies that there are only finitely many negative eigenvalues, for each fixed value of $\lambda$. But, according to Lemma \ref{lemma:negativeSpace}, the number of negative eigenvalues goes to infinity as $\lambda \rightarrow +\infty$. Consequently, there must be some $\lambda^*$ for which the number of negative eigenvalues changes. But then at $\lambda^*$ the dimension of the space of negative eigenfunctions grows, so that $\lambda^*$ is a point of changing index for the operator $\Delta^{-2} \circ F_\lambda$.

Now we can finally apply Krasnosel'skii's Bifurcation Theorem \ref{thm:krasnoselskii} to the equation $\Delta^{-2} \circ F_\lambda(u) = 0$. Indeed, we proved that $\Delta^{-2} \circ F_\lambda$ has the form $I - K$ with $K$ compact, and that $\lambda^*$ is a point of changing index. Hence, $\lambda^*$ is a point of bifurcation for $\Delta^{-2} \circ F_\lambda$. This means that there is a non-trivial solution of $\Delta^{-2} \circ F_\lambda(u) = 0$ for some $\lambda$ which is a perturbation of $u_0$, i.e.~a zero of $F_\lambda$ on $S_\lambda$. We can then extend $u$ to the whole $\mathbb{R}^4$ by reflecting it along the boundaries of the strip and repeating this procedure. Continuity is then assured by construction, the accordance of first and third normal derivatives comes from the definition of the space $X_\lambda$, and accordance of second derivatives is again given by our construction (the operator $\partial_{x_4}^2$ is indeed invariant under the reflection $x_4 \leftrightarrow -x_4$). Therefore, this is a weak solution and thus a strong solution, by bootstrapping and applying twice Theorem \ref{thm:weightedSchauder}. As this solution is periodic and non-zero, it must then have infinite volume $\int_{\mathbb{R}^4} \text{e}^u = +\infty$.
\end{proof}

\appendix
\section{Proof of Weighted Schauder's Estimates Theorem \ref{thm:weightedSchauder}}\label{app:weightedSchauder}
Let $u$ be a distributional solution of
\begin{equation}\label{eq:originalElliptic}
Lu = a^{ij}(x) \partial_i \partial_j u + b^i(x) \partial_i u + c(x) u = f,
\end{equation}
where $L$ is a uniformly elliptic operator. Take $w \in C^{2,\alpha}(\Omega), w > 0$ and set $v := wu$. We have
\begin{equation}\label{eq:der1}
\partial_i v(x) = \partial_i w(x) u(x) + w(x) \partial_i u(x)
\end{equation}
and
\begin{equation}\label{eq:der2}
\begin{split}
\partial_i \partial_j v(x) =\,\, &\partial_i \partial_j w(x) u(x) + \partial_i w(x) \partial_j u(x) \\
    &+ \partial_j w(x) \partial_i u(x) + w(x) \partial_i \partial_j u(x).
\end{split}
\end{equation}
Contracting \eqref{eq:der2} with $a^{ij}(x)$ and using \eqref{eq:originalElliptic} we get
\begin{equation}\label{eq:mixedElliptic}
\begin{split}
a^{ij}(x) &\partial_i \partial_j v(x) + \left(w(x) b^j(x) - 2a^{ij}(x) \partial_i w(x) \right)\partial_j u(x) \\
&+ \left(c(x)w(x) - a^{ij}(x) \partial_i \partial_j w(x) \right) u(x) = w(x)f(x) =: g(x).
\end{split}
\end{equation}
where we also use the fact that, by hypothesis, $a^{ij} = a^{ji}$.

According to the definition of $v$ and to equation \eqref{eq:der1}, we know that:
\begin{itemize}
\item $u(x) = \frac{v(x)}{w(x)}$ (recall that $w$ is never 0),
\item $\partial_i u(x) = \frac{1}{w(x)} \left(\partial_i v(x) - \frac{\partial_i w(x)}{w(x)} v(x)\right)$.
\end{itemize}
Hence we get
\begin{equation}\label{eq:weightedElliptic}
\begin{split}
&a^{ij}(x) \partial_i \partial_j v(x) + \left(b^j(x) - 2a^{ij}(x) \partial_i \log w(x) \right) \partial_j v(x) \\
&+ \! \Big(2a^{ij} \partial_i \log w(x) \partial_j \log w(x) \\
&\quad - a^{ij}(x) \frac{\partial_i \partial_j w(x)}{w(x)} - b^j(x) \partial_j \log w(x) + c(x) \Big) v(x) \\
&\qquad = w(x)f(x) =: g(x).
\end{split}
\end{equation}
Observe now that what we got in this way is still a uniformly elliptic equation, as the highest order coefficients are still the same.

Assume then that the hypothesis on the coefficients of $L$ given by Theorem \ref{thm:SchauderInterior} hold. In order to apply the Schauder's estimates on equation \eqref{eq:weightedElliptic}, then, it suffices to check that there exists some positive constant $\tilde{\Lambda}$ such that
\[
|\tilde{b}^i|^{(1)}_{0,\alpha,\Omega}, |\tilde{c}|^{(2)}_{0,\alpha,\Omega} \leq \tilde{\Lambda},
\]
where $\tilde{b}^i$ and $\tilde{c}$ are, respectively, the 1-st and 0-th order coefficients of the new equation \eqref{eq:weightedElliptic}. Moreover, we will need $g \in C^{0,\alpha}(\Omega)$, which is precisely $f \in C^{0,\alpha}_w(\Omega)$.

We first deal with the first order coefficient. By the triangular inequality we have
\[
\begin{split}
|\tilde{b}^i|^{(1)}_{0,\alpha,\Omega} &= |b^i - 2a^{ij} \partial_j \log w|^{(1)}_{0,\alpha,\Omega} \\
    &\leq |b^i|^{(1)}_{0,\alpha,\Omega} + 2|a^{ij} \partial_j \log w|^{(1)}_{0,\alpha,\Omega},
\end{split}
\]
hence, it suffices to show that $|a^{ij} \partial_j \log w|^{(1)}_{0,\alpha,\Omega}$ is finite. By hypothesis we know that $|a^{ij}|^{(0)}_{0,\alpha,\Omega} \leq \Lambda$. Consequently, by Proposition \ref{prop:CauchySchwarz}, we get that
\[
|a^{ij} \partial_j \log w|^{(1)}_{0,\alpha,\Omega} \leq |a^{ij}|^{(0)}_{0,\alpha,\Omega} |\partial_j \log w|^{(1)}_{0,\alpha,\Omega} \leq \Lambda |\partial_j \log w|^{(1)}_{0,\alpha,\Omega}.
\]
Thus, it suffices to require that
\[
|\partial_j \log w|^{(1)}_{0,\alpha,\Omega} \leq C_1 < +\infty, \quad \forall j.
\]

We now move to the 0-th order coefficient. Again by the triangular inequality
\[
\begin{split}
&|\tilde{c}|^{(2)}_{0,\alpha,\Omega} = \left|c - b^j \partial_j \log w - a^{ij} \frac{\partial_i \partial_j w}{w} + 2a^{ij} \partial_i \log w \partial_j \log w \right|^{(2)}_{0,\alpha,\Omega} \\
    &\leq |c|^{(2)}_{0,\alpha,\Omega} + |b^j \partial_j \log w|^{(2)}_{0,\alpha,\Omega} + \left|a^{ij} \frac{\partial_i \partial_j w}{w}\right|^{(2)}_{0,\alpha,\Omega} + 2|a^{ij} \partial_i \log w \partial_j \log w|^{(2)}_{0,\alpha,\Omega}.
\end{split}
\]
By hypothesis, the first summand in this last expression is bounded by $\Lambda$. Moreover, again by Proposition \ref{prop:CauchySchwarz}, we get
\[
|b^j \partial_j \log w|^{(2)}_{0,\alpha,\Omega} \leq |b^j|^{(1)}_{0,\alpha,\Omega} |\partial_j \log w|^{(1)}_{0,\alpha,\Omega} \leq \Lambda C_1,
\]
\[
|a^{ij} \partial_i \log w \partial_j \log w|^{(2)}_{0,\alpha,\Omega} \leq |a^{ij}|^{(0)}_{0,\alpha,\Omega} |\partial_i \log w|^{(1)}_{0,\alpha,\Omega} |\partial_j \log w|^{(1)}_{0,\alpha,\Omega} \leq \Lambda C_1^2
\]
and
\[
\left|a^{ij} \frac{\partial_i \partial_j w}{w}\right|^{(2)}_{0,\alpha,\Omega} \leq |a^{ij}|^{(2)}_{0,\alpha,\Omega} \left|\frac{\partial_i \partial_j w}{w}\right|^{(2)}_{0,\alpha,\Omega} \leq \Lambda \left|\frac{\partial_i \partial_j w}{w}\right|^{(2)}_{0,\alpha,\Omega},
\]
meaning that it suffices to require that
\[
\left|\frac{\partial_i \partial_j w}{w}\right|^{(2)}_{0,\alpha,\Omega} \leq C_2 < +\infty, \quad \forall i,j.
\]

\section{Computations on Legendre functions}\label{app:2Dcomputations}
We solve the equation
\[
-\tilde{v}_j''(x) - 2\sech^2(x) \tilde{v}_j(x) = -\left(\frac{\pi j}{\lambda}\right)^2 \tilde{v}_j(x), \quad \forall	x \in \mathbb{R}, \, \forall j \in \mathbb{N}.
\]
for fixed $j \in \mathbb{N}$. First, if $j=0$ this becomes
\[
\tilde{v}_0''(x) + 2\sech^2(x) \tilde{v}_0(x) = 0,
\]
which has general solution
\[
\tilde{v}_0(x) = c_1 \tanh(x) + c_2 \left(-\frac{1}{2} \tanh(x) \log \frac{1-\tanh(x)}{1+\tanh(x)} - 1 \right).
\]
Now, notice that the first summand is odd\footnote{Indeed, it is the $x$ derivative of the trivial solution $u_0$, which is even. Observe that the $x$ derivative of a trivial solution is always a solution of the linearized equation. In fact, as $u_0$ is a solution of the original equation
\[
\Delta u_0(x,y) + \text{e}^{u_0(x,y)} = 0,
\]
taking the derivative in $x$ of this expression one gets
\[
\Delta \pd{u_0}{x}(x,y) + \text{e}^{u_0(x,y)} \pd{u_0}{x}(x,y) = 0,
\]
which means that $\pd{u_0}{x}$ is a solution of the linearized problem in $u_0$.
} and the second is even. Thus, $c_1=0$. Moreover, we also have $c_2=0$, as the second summand grows linearly at infinity (i.e., faster than the requirements). Hence, we can already exclude the possibility of having elements in $\ker L$ with $j=0$.

Let then $j>0$ and make the substitution $y = \tanh(x)$:
\[
\left[ \left(1-y^2\right) \tilde{v}_j'(y) \right]' + 2 \tilde{v}_j(y) - \frac{1}{1-y^2} \left( \frac{\pi j}{\lambda} \right)^2 \tilde{v}_j(y) = 0.
\]
We get then a Legendre equation with integer degree $l = 1$ and with order $\mu = \frac{2\pi j}{\lambda}$. A general solution is then given by a linear combination of first and second order Legendre functions:
\[
\tilde{v}_j(x) = A \, P_1^{\frac{\pi j}{\lambda}}(\tanh(x)) + B \, Q_1^{\frac{\pi j}{\lambda}}(\tanh(x)).
\]
Actually, not all the values of $A$ and $B$ are admissible, as we shall immediately see. The following expansions can be found, for instance, on \cite{NIST}.

Suppose first that $B=0$ and consider thus $P_1^\frac{\pi j}{\lambda}$ only. It is known that
\[
P_1^\mu(y) \underset{y \to 1^-}{\sim} \frac{1}{\Gamma(1-\mu)} \left( \frac{2}{1-y} \right)^{\frac{\mu}{2}}
\]
for $\mu \not\in \mathbb{N}$. Therefore, for such values of $\mu$, $P_1^\mu(\tanh(x)) \sim_{x \to +\infty} C \text{e}^{\mu x}$, meaning that such a solution cannot lead to functions in the space $X_\lambda$. Hence, we know that, if $B=0$, $\mu$ must be an integer. We now recall that, if $\mu$ is an integer and $\mu > l$, then $P_l^\mu \equiv 0$. Consequently, if $B=0$, the only non-trivial solution is the one with $\mu = 1$, namely $1 = \frac{\pi j}{\lambda}$. Explicitly:
\[
\tilde{v}_j(x) = \begin{cases}
A \sech(x) & \text{if } j = \frac{\lambda}{\pi} \\
0 \quad & \text{otherwise}
\end{cases}
\]
(notice that $\sech(x)$ is even).

Suppose now that $A=0$ and consider $Q_1^\frac{\pi j}{\lambda}$ only. It is known that
\[
Q_1^\mu(y) \underset{y \to 1^-}{\sim} \frac{1}{2} \cos(\mu \pi) \Gamma(\mu) \left(\frac{2}{1-y}\right)^\frac{\mu}{2}
\]
for $\mu \not= \frac{1}{2}, \frac{3}{2}, \frac{5}{2}, \dots$. As before, then, for such values of $\mu$ we have that $Q_1^\mu(\tanh(x)) \sim_{x \to +\infty} C \text{e}^{\mu x}$. Hence, in order to have functions in $X_\lambda$, we must require that $\mu = \frac{1}{2}, \frac{3}{2}, \frac{5}{2}, \dots$. In this case the expansion at $1^-$ becomes
\[
Q_1^\mu(y) \underset{y \to 1-}{\sim} (-1)^{\mu + \frac{1}{2}} \frac{\pi \Gamma(\mu+2)}{2 \Gamma(\mu+1) \Gamma(2-\mu)} \left(\frac{1-y}{2}\right)^\frac{\mu}{2}
\]
if $1 \pm \mu = l \pm \mu \not= -1, -2, -3, \dots$ (which is trivially true). Therefore, the function stays bounded for $x \to +\infty$ (i.e.~$y \to 1^-$). Nonetheless,
\[
Q_1^\mu(y) = -\cos((1 + \mu)\pi)Q_1^\mu(-y) - \frac{\pi}{2} \sin((1+\mu)\pi)P_1^\mu(-y)
\]
immediately shows that the function blows-up as $y \to -1^+$, i.e. as $x \to -\infty$. In this way we have excluded all the possible $\mu$'s and we can therefore assess that, in order to have a non-trivial $v_j \in X_\lambda$, it must be $A \not= 0$.

We finally have to check that there are no combinations of $A, B \not= 0$ that lead to solutions in $X_\lambda$. Observe first that, according to what we said before
\begin{itemize}
\item $\mu \in \mathbb{N}$ implies that $A \, P_1^\mu(\tanh(x)) + B \, Q_1^\mu(\tanh(x))$ blows-up exponentially at both $+\infty$ and $-\infty$ ($P$ is finite and $Q$ blows-up as before);
\item $\mu = \frac{1}{2}, \frac{3}{2}, \frac{5}{2}, \dots$ implies that $A \, P_1^\mu(\tanh(x)) + B \, Q_1^\mu(\tanh(x))$ blows-up exponentially at $-\infty$ ($Q$ is finite and $P$ blows-up as before);
\end{itemize}
so that we can choose from the beginning $\mu \not= \frac{1}{2}, 1, \frac{3}{2}, 2, \frac{5}{2}, 3, \dots$. From the expansion for $y \to 1^-$ we know that:
\[
A \, P_1^\mu(y) + B \, Q_1^\mu(y) \underset{y \to 1^-}{\sim} \left[\frac{A}{\Gamma(1-\mu)} + \frac{B}{2} \cos(\mu \pi) \Gamma(\mu) \right] \left(\frac{2}{1-y}\right)^\frac{\mu}{2},
\]
so we need
\[
A = -\frac{\Gamma(\mu)\Gamma(1-\mu)}{2} \cos(\mu \pi) B.
\]
We now turn to the expansions for $y \to -1^+$. We have that
\[
\begin{split}
Q_1^\mu(y) &= -\cos((1 + \pi)\pi)Q_1^\mu(-y) - \frac{\pi}{2} \sin((1+\mu)\pi)P_1^\mu(-y) \\
           &\underset{y \to -1^+}{\sim} -\frac{1}{2} \cos((1+\mu)\pi)\cos(\mu\pi)\Gamma(\mu) \left(\frac{2}{1+y}\right)^\frac{\mu}{2} + \\
           &\qquad \qquad - \frac{\pi}{2} \sin((1+\mu)\pi) \frac{1}{\Gamma(1-\mu)} \left(\frac{2}{1+y}\right)^\frac{\mu}{2}
\end{split}
\]
and
\[
\begin{split}
P_1^\mu(y) &= -\frac{2}{\pi} \sin((1+\mu)\pi) Q_1^\mu(-y) + \cos((1+\mu)\pi)P_1^\mu(-y) \\
           &\underset{y \to -1^+}{\sim} -\frac{2}{\pi} \sin((1+\mu)\pi) \frac{1}{2} \cos(\mu\pi)\Gamma(\mu)\left(\frac{2}{1+y} \right)^\frac{\mu}{2} + \\
           &\qquad \qquad + \cos((1+\mu)\pi)\frac{1}{\Gamma(1-\mu)} \left(\frac{2}{1+y}\right)^\frac{\mu}{2}.
\end{split}
\]
We hence need to check whether it is possible to have (writing the full expansion for $A P_1^\mu + B Q_1^\mu$ and substituting the value of $A$ we found before)
\[
\begin{split}
-\frac{\Gamma(\mu)\Gamma(1-\mu)}{2} \cos(\mu \pi) \left[-\frac{\Gamma(\mu)}{\pi} \sin((1+\mu)\pi)\cos(\mu\pi) + \frac{\cos((1+\mu)\pi)}{\Gamma(1-\mu)}\right] + \\
 + \left[-\frac{\Gamma(\mu)}{2} \cos((1+\mu)\pi)\cos(\mu\pi) - \frac{\pi}{2} \frac{\sin((1+\mu)\pi)}{\Gamma(1-\mu)}\right] = 0
\end{split}
\]
for some $\mu$. This is the only case, indeed, for which the solution does not grow exponentially as $x \to -\infty$ ($y \to -1^+$). This equation in $\mu$ can be simplified to
\[
-\Gamma(1-\mu)\Gamma(\mu)\cos(\pi\mu)\cot(\pi\mu) \left[\Gamma(\mu)\Gamma(1-\mu)\sin((1+\mu)\pi) + 2\pi \right] = \pi^2
\]
and one can check numerically that it does not exist a $\mu \in \mathbb{R}_{>0}$ that satisfies this last expression.

Hence we get
\[
\tilde{v}_j \not\equiv 0 \iff -\left(\frac{\pi j}{\lambda}\right)^2 = -1 \iff j = \frac{\lambda}{\pi}.
\]

\bibliography{4Dliouville.bib}

\begin{thebibliography}{10}
\expandafter\ifx\csname url\endcsname\relax
  \def\url#1{\texttt{#1}}\fi
\expandafter\ifx\csname urlprefix\endcsname\relax\def\urlprefix{URL }\fi
\expandafter\ifx\csname href\endcsname\relax
  \def\href#1#2{#2} \def\path#1{#1}\fi

\bibitem{Hyder2017}
A.~Hyder, Conformally euclidean metrics on $\mathbb{R}^n$ with arbitrary total
  \textit{Q}-curvature, Analysis {\&} {PDE} 10~(3) (2017) 635--652.
\newblock \href {http://dx.doi.org/10.2140/apde.2017.10.635}
  {\path{doi:10.2140/apde.2017.10.635}}.

\bibitem{Caglioti1995}
E.~Caglioti, P.~L. Lions, C.~Marchioro, M.~Pulvirenti, A special class of
  stationary flows for two-dimensional euler equations: A statistical mechanics
  description. part ii, Communications in Mathematical Physics 174~(2) (1995)
  229--260.
\newblock \href {http://dx.doi.org/10.1007/BF02099602}
  {\path{doi:10.1007/BF02099602}}.

\bibitem{Chanillo1994}
S.~Chanillo, M.~Kiessling, Rotational symmetry of solutions of some nonlinear
  problems in statistical mechanics and in geometry, Communications in
  Mathematical Physics 160~(2) (1994) 217--238.
\newblock \href {http://dx.doi.org/10.1007/BF02103274}
  {\path{doi:10.1007/BF02103274}}.

\bibitem{Tarantello2008}
G.~Tarantello, Selfdual gauge field vortices: An analytical approach, 1st
  Edition, Progress in Nonlinear Differential Equations and Their Applications
  72, Birkhäuser Basel, 2008.

\bibitem{YYang2001}
Y.~Yang, Solitons in Field Theory and Nonlinear Analysis, 1st Edition, Springer
  Monographs in Mathematics, Springer-Verlag New York, 2001.

\bibitem{Osgood1988}
B.~Osgood, R.~Phillips, P.~Sarnak, Extremals of determinants of
  \mbox{Laplacians}, Journal of Functional Analysis 80~(1) (1988) 148 -- 211.
\newblock \href {http://dx.doi.org/10.1016/0022-1236(88)90070-5}
  {\path{doi:10.1016/0022-1236(88)90070-5}}.

\bibitem{Hilbert1902}
D.~Hilbert, Mathematical problems, Bulletin of the American Mathematical
  Society 8~(10) (1902) 437--480.
\newblock \href {http://dx.doi.org/10.1090/s0002-9904-1902-00923-3}
  {\path{doi:10.1090/s0002-9904-1902-00923-3}}.

\bibitem{Chen1991}
W.~Chen, C.~Li, Classification of solutions of some nonlinear elliptic
  equations, Duke Math. J. 63~(3) (1991) 615--622.
\newblock \href {http://dx.doi.org/10.1215/S0012-7094-91-06325-8}
  {\path{doi:10.1215/S0012-7094-91-06325-8}}.

\bibitem{Henrici1974}
P.~Henrici, Applied and computational complex analysis, Vol. Volume 3 of Pure
  and applied mathematics, Wiley, 1974.

\bibitem{Galvez2012}
J.~A. G{\'a}lvez, A.~Jim{\'e}nez, P.~Mira, The geometric \mbox{Neumann} problem
  for the \mbox{Liouville} equation, Calculus of Variations and Partial
  Differential Equations 44~(3) (2012) 577--599.
\newblock \href {http://dx.doi.org/10.1007/s00526-011-0445-4}
  {\path{doi:10.1007/s00526-011-0445-4}}.

\bibitem{Branson1985}
T.~P. Branson, Differential operators cononically associated to a conformal
  structure., Mathematica Scandinavica 57 (1985) 293.
\newblock \href {http://dx.doi.org/10.7146/math.scand.a-12120}
  {\path{doi:10.7146/math.scand.a-12120}}.

\bibitem{Strauss2009}
W.~A. Strauss, Partial differential equations: an introduction, 2nd Edition,
  Wiley, 2009.

\bibitem{Ray1971}
D.~Ray, I.~Singer, \emph{R}-torsion and the \mbox{Laplacian} on
  \mbox{Riemannian} manifolds, Advances in Mathematics 7~(2) (1971) 145--210.
\newblock \href {http://dx.doi.org/10.1016/0001-8708(71)90045-4}
  {\path{doi:10.1016/0001-8708(71)90045-4}}.

\bibitem{Chang2008}
S.-Y.~A. Chang, M.~Eastwood, B.~{\O}rsted, P.~C. Yang, What is
  \textit{Q}-curvature?, Acta Applicandae Mathematicae 102~(2-3) (2008)
  119--125.
\newblock \href {http://dx.doi.org/10.1007/s10440-008-9229-z}
  {\path{doi:10.1007/s10440-008-9229-z}}.

\bibitem{Gursky2012}
M.~Gursky, A.~Malchiodi, Non-uniqueness results for critical metrics of
  regularized determinants in four dimensions, Communications in Mathematical
  Physics 315~(1) (2012) 1--37.
\newblock \href {http://dx.doi.org/10.1007/s00220-012-1535-7}
  {\path{doi:10.1007/s00220-012-1535-7}}.

\bibitem{Paneitz2008}
S.~Paneitz, A quartic conformally covariant differential operator for arbitrary
  pseudo-\mbox{Riemannian} manifolds (summary), Symmetry, Integrability and
  Geometry: Methods and Applications\href
  {http://dx.doi.org/10.3842/sigma.2008.036}
  {\path{doi:10.3842/sigma.2008.036}}.

\bibitem{Branson1991}
T.~P. Branson, B.~{\O}rsted, Explicit functional determinants in four
  dimensions, Proceedings of the American Mathematical Society 113~(3) (1991)
  669--669.
\newblock \href {http://dx.doi.org/10.1090/s0002-9939-1991-1050018-8}
  {\path{doi:10.1090/s0002-9939-1991-1050018-8}}.

\bibitem{Esposito2019}
P.~Esposito, A.~Malchiodi, Critical metrics for log-determinant functionals in
  conformal geometry, preprint (2019).
\newblock \href {http://arxiv.org/abs/1906.08188} {\path{arXiv:1906.08188}}.

\bibitem{Fefferman2013}
C.~Fefferman, C.~R. Graham, Juhl's formulae for {GJMS} operators and
  \textit{Q}-curvatures, Journal of the American Mathematical Society 26~(4)
  (2013) 1191--1207.
\newblock \href {http://dx.doi.org/10.1090/s0894-0347-2013-00765-1}
  {\path{doi:10.1090/s0894-0347-2013-00765-1}}.

\bibitem{Graham2007}
C.~R. Graham, A.~Juhl, Holographic formula for \textit{Q}-curvature, Advances
  in Mathematics 216~(2) (2007) 841--853.
\newblock \href {http://dx.doi.org/10.1016/j.aim.2007.05.021}
  {\path{doi:10.1016/j.aim.2007.05.021}}.

\bibitem{Juhl2009}
A.~Juhl, Families of Conformally Covariant Differential Operators, Q-Curvature
  and Holography, 1st Edition, Progress in Mathematics, Birkhäuser Boston,
  2009.

\bibitem{Chang2000}
S.-Y.~A. Chang, J.~Qing, P.~C. Yang, On the {Chern-Gauss-Bonnet} integral for
  conformal metrics on $\mathbb{R}^4$, Duke Math. J. 103~(3) (2000) 523--544.
\newblock \href {http://dx.doi.org/10.1215/S0012-7094-00-10335-3}
  {\path{doi:10.1215/S0012-7094-00-10335-3}}.

\bibitem{Branson1994}
T.~P. Branson, P.~B. Gilkey, The functional determinant of a four-dimensional
  boundary value problem, Transactions of the American Mathematical Society
  344~(2) (1994) 479--531.
\newblock \href {http://dx.doi.org/10.1090/s0002-9947-1994-1240945-8}
  {\path{doi:10.1090/s0002-9947-1994-1240945-8}}.

\bibitem{Chang2004}
S.-Y.~A. Chang, \href{https://web.math.princeton.edu/~chang/zur.pdf}{Nonlinear
  elliptic equations in conformal geometry} (May 2004).
\newline\urlprefix\url{https://web.math.princeton.edu/~chang/zur.pdf}

\bibitem{Lin1998}
C.-S. Lin, A classification of solutions of a conformally invariant fourth
  order equation in $\mathbb{R}^n$, Commentarii Mathematici Helvetici 73~(2)
  (1998) 206--231.
\newblock \href {http://dx.doi.org/10.1007/s000140050052}
  {\path{doi:10.1007/s000140050052}}.

\bibitem{Wei2008}
J.~Wei, D.~Ye, Nonradial solutions for a conformally invariant fourth order
  equation in $\mathbb{R}^4$, Calculus of Variations and Partial Differential
  Equations 32~(3) (2008) 373--386.
\newblock \href {http://dx.doi.org/10.1007/s00526-007-0145-2}
  {\path{doi:10.1007/s00526-007-0145-2}}.

\bibitem{Delaunay1841}
C.-E. Delaunay, Sur la surface de r{\'e}volution dont la courbure moyenne est
  constante., Journal de math{\'e}matiques pures et appliqu{\'e}es (1841)
  309--314.

\bibitem{Dancer2001}
E.~N. {Dancer}, {New solutions of equations on $\mathbb R^n$.}, {Ann. Sc. Norm.
  Super. Pisa, Cl. Sci., IV. Ser.} 30~(3-4) (2001) 535--563.

\bibitem{Martinazzi2008}
L.~Martinazzi, Classification of solutions to the higher order
  \mbox{Liouville's} equation on $\mathbb{R}^{2m}$, Mathematische Zeitschrift
  263~(2) (2008) 307.
\newblock \href {http://dx.doi.org/10.1007/s00209-008-0419-1}
  {\path{doi:10.1007/s00209-008-0419-1}}.

\bibitem{Martinazzi2008b}
L.~Martinazzi, Conformal metrics on $\mathbb{R}^{2m}$ with constant
  \emph{Q}-curvature, Rendiconti Lincei - Matematica e Applicazioni (2008)
  279--292\href {http://dx.doi.org/10.4171/rlm/525}
  {\path{doi:10.4171/rlm/525}}.

\bibitem{Martinazzi2014}
L.~Martinazzi, A.~Hyder, Conformal metrics on $\mathbb{R}^{2m}$ with constant
  \textit{Q}-curvature, prescribed volume and asymptotic behavior, Discrete and
  Continuous Dynamical Systems 35~(1) (2014) 283--299.
\newblock \href {http://dx.doi.org/10.3934/dcds.2015.35.283}
  {\path{doi:10.3934/dcds.2015.35.283}}.

\bibitem{Martinazzi2013}
L.~Martinazzi, Conformal metrics on $\mathbb{R}^{2m}$ with constant
  \emph{Q}-curvature and large volume, Annales de l'Institut Henri Poincare (C)
  Non Linear Analysis 30~(6) (2013) 969--982.
\newblock \href {http://dx.doi.org/10.1016/j.anihpc.2012.12.007}
  {\path{doi:10.1016/j.anihpc.2012.12.007}}.

\bibitem{Malchiodi2009}
A.~Malchiodi, Some new entire solutions of semilinear elliptic equations on
  $\mathbb{R}^n$, Advances in Mathematics 221~(6) (2009) 1843--1909.
\newblock \href {http://dx.doi.org/10.1016/j.aim.2009.03.012}
  {\path{doi:10.1016/j.aim.2009.03.012}}.

\bibitem{AmbrosettiProdi}
A.~Ambrosetti, G.~Prodi, A primer of nonlinear analysis, Cambridge Studies in
  Advanced Mathematics, Cambridge University Press, 1995.

\bibitem{Kielhofer}
H.~Kielhöfer, Bifurcation Theory: An Introduction with Applications to Partial
  Differential Equations, 2nd Edition, Applied Mathematical Sciences 156,
  Springer-Verlag New York, 2012.

\bibitem{Krasnoselskii1984}
M.~A. Krasnosel'skii, P.~P. Zabre{\u\i}ko, Geometrical methods of nonlinear
  analysis, Vol. 263, Springer, 1984.

\bibitem{Shimakura}
N.~Shimakura, Partial Differential Operators of Elliptic Type, Translations of
  Mathematical Monographs, American Mathematical Society, 1992.

\bibitem{GilbargTrudinger}
D.~Gilbarg, N.~S. Trudinger, Elliptic partial differential equations of second
  order, 2nd Edition, Classics in mathematics, Springer, 2001.

\bibitem{PoschlTeller}
G.~P\"oschl, E.~Teller, Bemerkungen zur quantenmechanik des anharmonischen
  oszillators, Zeitschrift f\"ur Physik~(83) (1933) 3--4.

\bibitem{Cohl2013}
H.~S. Cohl, Fourier expansions for a logarithmic fundamental solution of the
  polyharmonic equation, Journal of Classical Analysis~(2) (2013) 107--127.
\newblock \href {http://dx.doi.org/10.7153/jca-02-09}
  {\path{doi:10.7153/jca-02-09}}.

\bibitem{Zeidler1986}
E.~Zeidler, Nonlinear functional analysis and its applications. Fixed-point
  theorems, Vol. Vol.1, Springer-Verlag Berlin and Heidelberg GmbH \& Co. K,
  1986.

\bibitem{Pohozaev1965}
S.~I. Pohozaev, On the eigenfunctions of the equation $\delta u + \lambda f(u)
  = 0$, Sov. Math. Doklady 6 (1965) 1408--1411.

\bibitem{Hyder2019}
A.~Hyder, J.~Wei, Non-radial solutions to a bi-harmonic equation with negative
  exponent, Calculus of Variations and Partial Differential Equations 58~(6).
\newblock \href {http://dx.doi.org/10.1007/s00526-019-1647-4}
  {\path{doi:10.1007/s00526-019-1647-4}}.

\bibitem{Brezis}
H.~Brezis, Functional Analysis, Sobolev Spaces and Partial Differential
  Equations, 1st Edition, Universitext, Springer-Verlag New York, 2010.

\bibitem{Helffer2013}
B.~Helffer, Spectral theory and its applications, Vol. 139, Cambridge
  University Press, 2013.

\bibitem{LiebLoss2001}
M.~L. Elliot H.~Lieb, Analysis, 2nd Edition, American Mathematical Society,
  2001.

\bibitem{NIST}
F.~W.~J. Olver, D.~W. Lozier, R.~F. Boisvert, C.~W. Clark, NIST handbook of
  mathematical functions, 1st Edition, Cambridge University Press, 2010.

\end{thebibliography}

\end{document}